\title{\large
 {\textbf     {
 ON PINNED FIELDS, INTERLACEMENTS, AND \\
 RANDOM WALK ON $(\mathbb{Z}/N \mathbb{Z})^2$
 }}}
\date{}
\numberwithin{equation}{section}
\newtheorem{thm}{Theorem}[section]
\newtheorem{lem}[thm]{Lemma}
\newtheorem{proposition}[thm]{Proposition}
\newtheorem{corollary}[thm]{Corollary}
\newtheorem*{Tbis}{Theorem \ref{T:7.12}'}
\theoremstyle{remark}
\newtheorem{rmk}[thm]{Remark}
\newtheorem*{proof10}{Proof of Lemma \ref{L:10.16}
}
\theoremstyle{definition}
\begin{document}

\maketitle

\begin{center}
\vspace{-3cm}
\vspace{1.3cm}
Pierre-Fran\c cois Rodriguez$^1$ 

\vspace{0.9cm}
Preliminary draft
\end{center}
\vspace{0.4cm}
\begin{abstract}
\centering
\begin{minipage}{0.9\textwidth}
We define two families of Poissonian soups of bidirectional trajectories on~$\mathbb{Z}^2$, which can be seen to adequately describe the local picture of the trace left by a random walk on the two-dimensional torus $(\mathbb{Z}/N \mathbb{Z})^2$, started from the uniform distribution, run up to a time of order $(N\log N)^2$ and forced to avoid a fixed point. The local limit of the latter was recently established in~\cite{CPV15}. Our construction proceeds by considering, somewhat in the spirit of statistical mechanics, a sequence of ``finite volume'' approximations, consisting of random walks avoiding the origin and killed at spatial scale $N$, either using Dirichlet boundary conditions, or by means of a suitably adjusted mass. By tuning the intensity $u$ of such walks with $N$, the occupation field can be seen to have a nontrivial limit, corresponding to that of the actual random walk. Our construction thus yields a two-dimensional analogue of the random interlacements model introduced in~\cite{Sz10} in the transient case. It also links it to the pinned free field in $\mathbb{Z}^2$, by means of a (pinned) Ray-Knight type isomorphism theorem.


\end{minipage}
\end{abstract}

\thispagestyle{empty}

\vspace{4.8cm}

\begin{flushleft}

$^1$Department of Mathematics \hfill May 2017 \\
University of California, Los Angeles \\
520, Portola Plaza, MS 6172\\
Los Angeles, CA 90095 \\
\texttt{rodriguez@math.ucla.edu}
\end{flushleft}

\newpage
\mbox{}
\thispagestyle{empty}
\newpage

\section{Introduction}
 Consider a simple random walk on the discrete $d$-dimensional torus of length $N$, started from the uniform distribution and run up to a suitable time $t_N=t_N(d)$, and observe the set of uncovered vertices (the vacant set), as $N$ tends to infinity, in an attempt to understand
\textit{how the walk tends to cover the torus}, i.e. what its vacant set looks like as $t_N$ is made to vary. For $d\geq 3$ and $t_N = u N^d$, with $u > 0$, a time scale which turns out to define a certain Poissonian regime for the excursions of the walk, this question has been the subject of extensive research in recent years, see in particular \cite{BS08}, \cite{Wi08}, \cite{Sz10}, \cite{TW11}, \cite{CT16}, see also the surveys \cite{CT12}, \cite{DRS14}, and references therein with regards to the related problem of disconnecting a discrete cylinder (with large base). In particular, a very fruitful idea has been to describe the local limit of the walk as $N \to \infty$ by a Poisson soup of bi-infinite trajectories on $\mathbb{Z}^d$, the so-called \textit{random interlacements}, cf. \cite{Sz10}, \cite{Sz12a}, to then study their connectivity properties, and to couple them suitably to the random walk in order to show that its vacant set exhibits a phase transition in $u$, essentially from having a unique giant component (at small $u$) to consisting of very small connected components, at sufficiently large $u$, see \cite{TW11}, \cite{CT16}, for precise statements.

For $d=2$, the analogous question is ill-posed, because the random walk will typically sweep all of a small region $A$ in the torus upon visiting it, thus yielding a trivial local limit. To remedy this, one can ``renormalize'' by introducing a penalizing effect, for instance by forcing the walk to avoid a fixed point (call it $0$). For a certain choice of $t_N$, see \eqref{E:1.4} below - but see Remark~\ref{R:final}~- the corresponding limit has been recently computed in \cite{CPV15}, see also \cite{CP16} (and \eqref{E:1.5} below) and the limiting occupation field has been given an interpretation in terms of the trace of a soup of \textit{tilted} trajectories. 

Building on these results, the present work introduces a certain Poissonian description of the random walk on the torus, akin to the interlacements in higher dimension. Our approach is perhaps best explained by analogy with the (massless) Gaussian free field pinned at the origin. We will in fact show there are deep ties between the two objects, a feature already hinted at in \cite{Sz12d}, \cite{Sz12e}. 
Thus, consider a massless free field $\varphi^N_{\cdot}= (\varphi^N_{x})_{x\in \mathbb{Z}^2}$ with $0$ boundary condition outside of a box $B_N \subset \mathbb{Z}^2$ of radius $N$ around the origin. As $N \to \infty$, $\varphi_{\cdot}^N$ delocalizes by recurrence, but the limiting field of increments 
\begin{equation}
\label{E:0.1}
\varphi^p_{\cdot} = \lim_N (\varphi_{\cdot}^N-\varphi_0^N)
\end{equation}
obtained by subtracting $\varphi_0^N$ everywhere, is well-defined (the limit in \eqref{E:0.1} is in distribution, see \eqref{E:2.16} below). 

In a similar vein, one can hope to describe the local limit of the pinned walk by considering a Poisson cloud of random walk trajectories (playing the role of $\varphi_{\cdot}^N$) \textit{killed at spatial scale $N$}, see \eqref{E:1.kill}, \eqref{E:2.kill} below for precise definitions, and thinning it by removing those trajectories hitting $0$. The analogy has its limitations due to the presence of an additional parameter, the intensity $u$ of trajectories in the picture, but the main result of this work is that this construction can indeed be carried out (the hastened reader is referred to the discussion surrounding \eqref{E:1.7}). As will turn out, $u$ needs to be suitably tuned with $N$ in order to yield a non-trivial limit. Moreover, the Poisson cloud at scale $N$ can be seen to converge as $N \to \infty$ to the soup of \textit{tilted} trajectories of \cite{CPV15} alluded to above, and further links the limiting occupation time profile of the Poisson cloud to the pinned free field in \eqref{E:0.1} by means of a suitable isomorphism theorem \textit{\`a la} Ray-Knight, see \cite{MR06}, and also \cite{Sz12c}, \cite{Sz12b}, for findings of similar flavor in higher dimensions.

 
 We now describe our results more precisely. Let $P^N$ denote the canonical law of the symmetric simple random walk (SRW) on the two-dimensional torus $\mathbb T_N =(\mathbb{Z}/N \mathbb{Z})^2$, with uniformly chosen starting point, and denote by $(X_n)_{n \geq 0}$ the associated canonical process. For $t \geq 0$, define the set 
\begin{equation}
\label{E:1.2}
U_t^N =  \mathbb{T}_N \setminus \{ X_0,X_1,\dots, X_{\lfloor t \rfloor}  \} 
\end{equation}
of uncovered sites at time $t$. By \cite{CPV15}, Theorem 2.6, one has, for any finite set $ A \subset \mathbb{Z}^2$ containing the origin, and all $\alpha > 0$,
\begin{equation}
\label{E:1.3}
\lim_{N \to \infty} P^N \big[ \pi_N(A) \subseteq U_{t_N^{(\alpha)}}^N \, \big| \, 0 \in U_{t_N^{(\alpha)}}^N \big] = e^{-\frac{\pi}{2}  \alpha \,  \text{cap}(A)},
\end{equation}
where $\pi_N: \mathbb{Z}^2 \to \mathbb T_N$ is the canonical projection, $\text{cap}(\cdot)$ stands for the two-dimensional capacity, cf. \eqref{E:2.12} below, and
\begin{equation}
\label{E:1.4}
t_N^{(\alpha)} = \frac{2\alpha}{\pi} N^2 \log^2N.
\end{equation}
We also set
\begin{equation}
\label{E:1.4.0}
t_N = t_N^{(1)}
\end{equation}
for later reference. In words, \eqref{E:1.3} asserts that the law of the random set $ \mathbb{T}_N \setminus U_{t_N^{(\alpha)}}^N$ under the conditional measure $P^N[\ \cdot \ |\,  0 \in U_{t_N^{(\alpha)}}^N]$ converges in distribution (in the sense of finite dimensional marginals) towards a probability measure $Q^{\alpha}$ on the space of configurations $ \widetilde{\Omega} = \{0,1 \}^{\mathbb{Z}^2}$ (endowed with its canonical $\sigma$-algebra $\widetilde{\mathcal{F}}$ and coordinate maps $\widetilde{Y}_{x}$, $x\in \mathbb{Z}^2$) characterized by
\begin{equation}
\label{E:1.5}
Q^{\alpha}(\widetilde{Y}_x=0, \, x \in A) \stackrel{\text{def.}}{=} e^{-\frac{\pi}{2} \, \alpha \,  \text{cap}(A)}, 
\end{equation}
 for any finite set $A$ containing the origin. Our main results, see Theorems \ref{T:LIMIT} and \ref{T:10.40} below, yield a constructive definition of the measure $Q^{\alpha}$ in terms of the occupation fields of certain families (indexed by $N$) of Poisson clouds of bidirectional trajectories on $\mathbb{Z}^2$ forced to avoid $0$ and killed \textit{at spatial scale $N$}, in the limit as $N \to \infty$ and with a suitably adjusted parameter $u=u_N$ governing the density of trajectories entering the picture. (Note that we think of killed trajectories as entering a cemetery point $x_* \notin \mathbb T_N$ upon being killed and remaining there from then on forever so that there is a natural notion of time-shift on the trajectories.) The killing at spatial scale $N$ means either of the following,
\begin{align}
&\text{- imposing Dirichlet boundary conditions (i.e. killing the walk) outside $B_N$, or} \label{E:1.kill}
\\[0.2em]
&\text{- using a mass $\epsilon_N$, the parameter of an independent exponential killing time} \label{E:2.kill} 
\end{align}
with $\epsilon_N \stackrel{N}{\rightarrow}0$ tuned appropriately, see Proposition \ref{P:10.4} below. For simplicity, we focus on \eqref{E:1.kill} for the remainder of this introduction. We thus define a family $\omega_{N,u}$, $N \geq 1$, $u\geq 0$, of Poisson random measures of 
bidirectional nearest-neighbor trajectories modulo time-shift, whose forward and backward part are killed after finitely many steps. The action of the governing intensity measure can be
informally summarized as follows:
first, defining the measure (on $\mathbb{Z}^2$)
\begin{equation}
\label{E:1.6}
\rho_{A}^{0,N}(x)= P_x[\widetilde{H}_A > T_{B_N}]\, P_x[H_0 > T_{B_N}] \,1_A(x), \quad x \in \mathbb{Z}^2,
\end{equation}
where $P_x$ is the canonical law of SRW on $\mathbb{Z}^2$ started at $x$, $\widetilde{H}_K$ is the hitting time of $K$, $H_K$ the entrance time of $K$, and $T_K=H_{K^c}$, cf. Section \ref{S:1}, one has, for any $A \subset B_N$,
\begin{enumerate}
\item the number $N_A$ of trajectories (modulo time-shift) in the cloud $\omega_{N,u}$ entering the set $A$ is a Poisson random variable with parameter $u\,\rho_{A}^{0,N}(\mathbb{Z}^2)$,
\item given $N_A$, these trajectories are independent and identically distributed, their entrance point $X_0$ in $A$ is sampled according to $\tilde\rho_{A}^{0,N} =\rho_{A}^{0,N}/\rho_{A}^{0,N}(\mathbb{Z}^2)$, and, setting time to be $0$ when the trajectory first enters in $A$, their backward part is distributed according to a simple random walk started at $X_0$, conditioned to exit $B_N$ before returning to $A$, and killed upon exiting $B_N$, and their forward part follows the law of a simple random walk started at $X_0$, conditioned to avoid $0$ until exiting $B_N$, and killed upon doing so.
\end{enumerate}
Then, defining $\mathcal{I}(\omega_{N,u})$ to be the set of vertices visited by at least one trajectory in the cloud $\omega_{N,u}$, the \textit{interlacement set at level $u$}, we find that 
\begin{equation}
\label{E:1.7}
\begin{split}
&\text{for any $\alpha > 0$ and any sequence $u_N=u_N(\alpha)$ satisfying}\\
&\text{$u_N(\alpha) \sim  \frac{2}{\pi} \alpha \log^2N$, as $N \to \infty$, the law of $ \mathcal{I}^{N,\alpha} \equiv \mathcal{I}(\omega_{N,u_N(\alpha)})$ on $\widetilde{\Omega}$}\\
&\text{converges in distribution towards the measure $Q^{\alpha}$ defined in \eqref{E:1.5}.
}
\end{split}
\end{equation}
 where $f \sim g$ means $\lim_N f(N)/g(N)=1$, the convergence in distribution is in the sense of finite-dimensional marginals, and with hopefully obvious wording, the law of  $\mathcal{I}^{N,\alpha}$ refers to the law of the occupation field $(1\{ x \in  \mathcal{I}^{N,\alpha} \})_{x\in \mathbb{Z}^2}$. In fact, \eqref{E:1.7} can be strengthened, cf. Theorem \ref{T:LIMIT} and Corollary \ref{C:LIMIT}, to the statement that the random measure $\omega_{N,u_N(\alpha)}$ converges in a suitable sense to the Poisson point process of tilted random walks introduced in \cite{CPV15}. An important observation, which serves as the starting point of the above construction, is a particular representation of the two-dimensional capacity as appearing in \eqref{E:1.5}, see Lemma \ref{L:4.cap} below (and also Proposition \ref{P:10.4} and Remark \ref{R:mass},1) in the massive case \eqref{E:2.kill}), which naturally makes the measure $\rho_{A,N}$ from \eqref{E:1.6} appear.

 We now describe the links to the pinned free field $\varphi^p$ on $\mathbb{Z}^2$ (cf. \eqref{E:0.1}, and also \eqref{E:2.16} below), which can be found in Theorems \ref{T:7.12}' and \ref{T:7.29}. We keep our focus on \eqref{E:1.kill}, and refer the reader to Remark \ref{R:mass}, 3) to see how to deduce the following results by taking suitable limits of massive models. We denote by $L_x(\omega_{N,u})$, $x \in \mathbb{Z}^2$, the local time profile associated to $\omega_{N,u}$, i.e. $L_x(\omega_{N,u})$ collects the total amount of time spent at $x$ by any of the trajectories in the support of $\omega_{N,u}$ (for definiteness, note that $\omega_{N,u}$ is constructed using continuous-time trajectories with unit jump rates). Our results then show, see Lemma \ref{L:6.20} and Theorem \ref{T:7.29}, that for any $\alpha > 0$ and $u_N(\alpha)$ as in \eqref{E:1.7},
 \begin{equation}
 \label{E:1.8}
 L_{\cdot}\big(\omega_{N,\frac2\pi u_N(\alpha)}\big) \stackrel{d}{\longrightarrow}L_{\cdot,\alpha}, \quad \text{ as } N \to \infty
 \end{equation}
(in the sense of finite-dimensional marginals) and the limiting occupation field $(L_{x,\alpha})_{x \in \mathbb{Z}^2}$ satisfies a ``pinned isomorphism theorem''
  \begin{equation}
   \label{E:1.9}
  \Big(L_{x,\alpha} + \frac12 (\varphi^{\,p}_x)^{\,2}\Big)_{x\in \mathbb{Z}^2} \stackrel{\text{law}}{=} \Big( \frac12 \big(  \varphi^{\,p}_x + \sqrt{2\alpha} a(x) \big)^2 \Big)_{x\in \mathbb{Z}^2},
  \end{equation}
  where, on the left-hand side, $\varphi_{\cdot}^p$ is sampled independently from $L_{\cdot,\alpha}$ and $a(x)$, $x \in \mathbb{Z}^2$ is the potential kernel of simple random walk on $\mathbb{Z}^2$, see \eqref{E:2.1} below. Note that the pinning produces a spatially inhomogenous shift modulated by $a(\cdot)$ on the right-hand side of \eqref{E:1.9}. 
  
 The isomorphism \eqref{E:1.9} comes about from a corresponding statement in finite volume, interesting in its own right, see Theorems \ref{T:7.12} and \ref{T:7.12}'. Due to the ``hard'' killing constraint in \eqref{E:1.kill}, the measure $\omega_{N,u}$ can be naturally associated to the decomposition of a \textit{single} Markov chain on $B_N \cup \{ x_*\}$, conditioned to avoid $0$, into excursions from $x_*$ (up to a certain random time). This is reminiscent of certain approximation schemes for infinite volume quantities in higher dimensions, see \cite{Sz12c}, and also \cite{Lu16}. As it turns out, one can first apply the Ray-Knight theorem to the above (conditioned) Markov chain, and then disintegrate the Gaussian fields at the right values. The claim \eqref{E:1.9} then follows by taking a suitable limit as $N \to \infty$: if one chooses to keep $u$ fixed, the resulting equality in law is trivial, but \eqref{E:1.7} and \eqref{E:1.8} suggest that this can be precluded by boosting $u$ in the right way, and \eqref{E:1.9} arises as a result of this.

\bigskip
We now describe the organization of this article. Section \ref{S:1} introduces some notation and collects a few useful facts. Sections \ref{S:2} and \ref{S:4} comprise the construction delineated above, leading to \eqref{E:1.7}, for Dirichlet b.c. as in \eqref{E:1.kill}. Section \ref{S:2} deals mostly with considerations in finite volume. The capacity formula of Lemma \ref{L:4.cap} naturally leads to a notion of interlacements avoiding a (finite) set $K$ and killed upon exiting a larger set $K'\supset K$, see in particular Theorem \ref{T:2}. In Section \ref{S:4}, we then perform the infinite-volume limit, with $K = \{ 0\}$, $K' =B_N$ and suitably tuned intensity $u=u_N$, to recover the (tilted) interlacements of \cite{CPV15}. The main results are Theorem~\ref{T:LIMIT} and Corollary~\ref{C:LIMIT}. Section \ref{S:5} deals with the connections to $\varphi_{\cdot}^{\, p}$ and the corresponding pinned isomorphisms, in finite and inifinite volume, see Theorems \ref{T:7.12} and \ref{T:7.29}. Finally, Section \ref{S:mass} entails the approximation by means of massive models. After some preparatory work (in particular, to determine the right scaling of $\epsilon_N$), the main result comes in Theorem \ref{T:10.40}. We conclude by sketching how to recover results from Section \ref{S:5} using this approach, and with a few general remarks.

\section{Preliminaries}
\label{S:1}
We consider the lattice $\mathbb{Z}^2$, with its usual nearest-neighbor graph structure, and the continuous-time symmetric simple random walk on $\mathbb{Z}^2$ with exponential holding times of parameter $1$. The canonical law of the walk started at $x$ is denoted by $P_x$, the corresponding expectation by $E_x$ and the canonical coordinates by $X_t$, $t \geq 0$. We write $Z_n$, $n \geq 0$, for the discrete skeleton of this walk, so that $X_t=Z_{N_t}$, $t\geq0$, where, under $P_x$, $(N_t)_{t\geq 0}$ is a Poisson process of rate $1$, independent of $(Z_n)_{n \geq 0}$. 
We introduce the potential kernel $a(\cdot)$ of the walk, defined as
\begin{equation}
\label{E:2.1}
a(x)= \lim_{n \to \infty}\sum_{k=0}^n\big( p_k(0)- p_k(x) \big), \text{ for } x \in \mathbb{Z}^2
\end{equation}
where 
\begin{equation}
\label{E:2.1.p}
p_k(x)=P_0[Z_k=x], \quad k \geq 0, x \in \mathbb{Z}^2,
\end{equation}
(see for instance \cite{La91}, Thm. 1.6.1 for well-definedness). One knows, see e.g. the proof of Thm. 1.6.1 in \cite{La91}, that the convergence in \eqref{E:2.1} is absolute,
\begin{equation}
\label{E:2.1.2}
\sum_{k=0}^{\infty} \big| p_k(0)- p_k(x) \big| < \infty.
\end{equation}
The function $a(\cdot)$ is non-negative, symmetric, i.e. $a(x)=a(-x)$, $x \in \mathbb{Z}^2$, $a(0)=0$, and 
\begin{equation}
\label{E:2.1.1}
\frac14 \sum_{y:\, y\sim x}( a(y) -a(x) )= \delta_0(x), \quad \text{ for all $x \in \mathbb{Z}^2$}.
\end{equation}
In particular, it is harmonic in $\mathbb{Z}^2 \setminus \{0\}$. Moreover, (cf. \cite{Sp76}, p.123, Prop. P2), 
\begin{equation}
\label{E:2.2}
\lim_{|x| \to\infty} a(x+x')-a(x) = 0, \text{ for all }x \in \mathbb{Z}^2,
\end{equation}
one has the asymptotics (see \cite{La91}, Thm. 1.6.2 and p.39)
\begin{equation}
\label{E:2.3.0}
a(x)=  \frac{2}{\pi} \log|x| + k + O(|x|^{-2}), \text{ as $x \to \infty$},
\end{equation}
for some positive constant $k$, and in particular, 
\begin{equation}
\label{E:2.3}
a(x) \sim \frac{2}{\pi}\log|x|, \text{ as $x \to \infty$}.
\end{equation}

Next, we collect a (gradient) estimate for the heat kernel of the continuous-time walk $(X_t)_{t\geq 0}$, which will be useful below. Let 
\begin{equation}
\label{E:2.3.5}
q_t(x)=P_0[X_t =x], \quad \text{for $x\in \mathbb{Z}^2$, $t \geq 0$.}
\end{equation}
As a consequence of the local central limit theorem, one has the bounds (see for instance \cite{LL10}, Thm. 2.3.6 - this result, stated for discrete-time walks, is easily transferable to the continuous time setting, cf. also the proof of (2.9), p.27 in \cite{LL10}),
\begin{equation}
\label{E:2.3.6}
|\nabla_x q_t(z) - \nabla_x \bar q_t(z)| \leq \frac{ c|x|}{t^2}, \quad \text{ for }x,z \in \mathbb{Z}^2, t \geq 0
\end{equation}
(with a constant $c$ independent of $x,z$ and $t$), where
\begin{equation}
\label{E:2.3.7}
\nabla_y f (x) = f(x+y)-f(x), \quad \text{for $f: \mathbb{Z}^2 \to \mathbb{R}$}, \,  x,y \in \mathbb{Z}^2,
\end{equation}
and
\begin{equation}
\label{E:2.3.8}
\bar q_t(x) = \frac{c}{ \pi t} e^{- \frac{c'|x|^2}{2t}}, \quad  x \in \mathbb{Z}^2, t\geq 0,
\end{equation}
for suitable $c,c' \in (0,\infty)$ (in fact $c=1$ and $c'=2$).

We recall some more elements of potential theory for the random walk. Given $K \subset \mathbb{Z}^2$, we write $H_K$, $\widetilde{H}_K$, and $T_K=H_{\mathbb{Z}^d\setminus K}$ for the entrance, hitting times of $K$ and exit times from $K$, respectively. If $K=\{x\}$ is a singleton, we simply write $H_x$ etc. For a finite set $A\subset \mathbb{Z}^2$, one defines the harmonic measure of $A$ (from infinity) by
\begin{equation}
\label{E:2.10}
\text{hm}_A(x)\stackrel{\text{def.}}{=} \lim_{|y|\to \infty}P_y[X_{\widetilde{H}_A} = x] , \quad x \in \mathbb{Z}^2
\end{equation}
(see for instance \cite{LL10}, Ch. 6.6 for the existence of this limit). Note that $\text{hm}_A$ is a probability measure on $A$, as follows by recurrence of the walk. It is well known that $\text{hm}_A(x)$ can be expressed in terms of escape probabilities from $A$, see \cite{LL10}, Prop. 6.6.1 and (6.44), as 
\begin{equation}
\label{E:2.11}
\text{hm}_A(x)=  \lim_{N \to \infty} \frac2\pi (\log N) P_x[T_{ B_N} < \widetilde{H}_A] , \quad x \in A.
\end{equation}
The two-dimensional capacity of a finite set $A \subset \mathbb{Z}^2$ is then defined as
\begin{equation}
\label{E:2.12}
\text{cap}(A) = \sum_{x\in A} a(x-y) \text{hm}_A(x), \quad \text{ for \textit{any} $y \in A$}.
\end{equation}
This is well-defined, i.e. does not depend on the choice of $y$, see \cite{LL10}, p.146. Moreover, if $A$ contains the origin, we will usually set $y=0$.  
We will also need some elements of potential theory for the killed walk.
We write $g_K$ for the Green function of the walk on $\mathbb{Z}^2$ killed outside $K$, for finite $K \subset \mathbb{Z}^2$:
\begin{equation}
\label{E:2.13}
g_K(x,y)= E_x \Big[ \int_0^{\infty} \text d t \, 1\{ X_t=y, t< T_K\}\Big], \text{ for }x,y \in \mathbb{Z}^2.
\end{equation}
Supposing that $K' \subset K$ and letting $U = K \setminus K'$, the Green functions $g_K$ and $g_{K'}$ are related via 
\begin{equation}
\label{E:2.13.0}
g_{K}(x,y)= g_{K'}(x,y) + E_x[H_U < T_K, \, g_K(X_{H_U},y)], \quad \text{for }x ,y \in \mathbb{Z}^2,
\end{equation}
which follows from an application of the strong Markov property (at time $H_U$). By Proposition 1.6.3 of \cite{La91}, the Green function for the walk killed outside of a set $K$ can be expressed in terms of the potential $a(\cdot)$ as
\begin{equation}
\label{E:4.0}
g_K(x,y) = E_{x}[a(X_{T_K} - y)] - a(x-y), \text{ for } x, y \in \overline{K}
\end{equation}
(note that in higher dimension, a similar formular holds true, with $a$ replaced by $g$, the infinite-volume Green function of the walk). The equilibrium measure of the set $A$ relative to $K$ for any $A \subset K \subset \subset \mathbb{Z}^d$ is defined as
\begin{equation}
\label{E:2.14}
e_{A,K}(x)= P_x[\widetilde{H}_K >T_{K}] 1_{A}(x), \text{ for } x \in \mathbb{Z}^2,
\end{equation}
along with the normalized equilibrium measure $\tilde e_{A,K}(x) = e_{A,K}(x)/ \sum_{y \in A} e_{A,K}(y)$, which is a probability measure on $A$ (supported on its interior (vertex) boundary). The measure $e_{A,K}$ satisfies the following sweeping identity: for all $A'\subset A'\subset K$, and $x \in \mathbb{Z}^2$, one has
\begin{equation}
\label{E:2.15}
e_{A',K}(x) = P_{e_{A,K}}[H_{A'}< T_B, X_{H_{A'}} = x],
\end{equation}
where we define $P_{\mu} = \sum_{x}\mu(x)P_x$, for any measure $\mu$ on $\mathbb{Z}^2$. 

We note the following bound for exponential moments of exit times.

\begin{lem}$\quad$
\label{L:2.exp}

\medskip 
\noindent There exists $c \in (0,\infty)$ such that, for all $R \geq 1$, one can find $\epsilon_0(R) > 0$ with
\begin{equation}
\begin{split}
\label{E:2.exp1}
E_0[e^{-\epsilon T_{B_R}} ] \leq 1- c \epsilon R^2, \quad \text{for all } \epsilon \leq \epsilon_0(R).
\end{split}
\end{equation}
\end{lem}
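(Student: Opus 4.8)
The plan is to estimate $E_0[e^{-\epsilon T_{B_R}}]$ from above by writing $e^{-\epsilon t} \leq 1 - \epsilon t + \tfrac12 \epsilon^2 t^2$ (valid for $t \geq 0$) and taking expectations under $P_0$ with $t = T_{B_R}$, which gives
\begin{equation}
\label{E:plan.moments}
E_0[e^{-\epsilon T_{B_R}}] \leq 1 - \epsilon\, E_0[T_{B_R}] + \tfrac12 \epsilon^2\, E_0[T_{B_R}^2].
\end{equation}
It therefore suffices to prove two facts: (i) a lower bound $E_0[T_{B_R}] \geq c_1 R^2$, and (ii) an upper bound $E_0[T_{B_R}^2] \leq c_2 R^4$ (with constants independent of $R$). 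Given these, the right-hand side of \eqref{E:plan.moments} is at most $1 - \epsilon c_1 R^2 + \tfrac12 \epsilon^2 c_2 R^4 = 1 - \epsilon R^2(c_1 - \tfrac12 \epsilon c_2 R^2)$, and choosing $\epsilon_0(R) = c_1/(c_2 R^2)$ (so that for $\epsilon \leq \epsilon_0(R)$ one has $\tfrac12 \epsilon c_2 R^2 \leq \tfrac12 c_1$) yields the claimed bound with $c = c_1/2$.

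For (i), the standard route is to use that $g_{B_R}(0,y) = E_0[\text{number of visits to }y\text{ before }T_{B_R}]$ (in the continuous-time sense, $E_0[\int_0^\infty 1\{X_t = y, t < T_{B_R}\}\,dt]$), so $E_0[T_{B_R}] = \sum_{y \in B_R} g_{B_R}(0,y)$. Using the representation \eqref{E:4.0}, $g_{B_R}(0,y) = E_0[a(X_{T_{B_R}} - y)] - a(-y)$, together with the asymptotics \eqref{E:2.3.0} for $a(\cdot)$, one sees that for $y$ in, say, the ball $B_{R/2}$, the quantity $g_{B_R}(0,y)$ is of order $\log R$ (the leading $\tfrac2\pi \log R$ terms from $E_0[a(X_{T_{B_R}} - y)]$ and the bounded or smaller contribution of $a(-y)$ combine appropriately); summing over the $\sim R^2$ points of $B_{R/2}$ gives $E_0[T_{B_R}] \geq c_1 R^2 \log R \geq c_1 R^2$. (A cruder argument also works: by the invariance principle, $P_0[T_{B_R} \geq \delta R^2] \geq c_3 > 0$ for a fixed small $\delta$ and all $R \geq 1$, whence $E_0[T_{B_R}] \geq \delta c_3 R^2$; this avoids Green's function computations entirely and is the path I would actually take for (i).)

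For (ii), the main obstacle, I would prove the second-moment bound $E_0[T_{B_R}^2] \leq c_2 R^4$ by a standard submartingale/concentration argument: from \eqref{E:4.0} and \eqref{E:2.3.0} one gets $E_x[T_{B_R}] \leq C R^2$ uniformly in $x \in B_R$ (since $g_{B_R}(x,y) \leq C \log R$ for all $x,y$ and there are $O(R^2)$ sites), and then $M_t = T_{B_R} \wedge t + \text{(correction)}$ arguments, or more simply the identity $E_0[T_{B_R}^2] = 2 E_0\big[\int_0^{T_{B_R}} E_{X_s}[T_{B_R}]\,ds\big] \leq 2 (C R^2) E_0[T_{B_R}] \leq 2 C R^2 \cdot C' R^2$ (using the Markov property at time $s$ and the uniform first-moment bound), gives exactly $E_0[T_{B_R}^2] \leq c_2 R^4$. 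The one point requiring care is the uniform-in-$x$ first moment bound $\sup_{x \in B_R} E_x[T_{B_R}] \leq C R^2$; this follows cleanly from $\sum_{y} g_{B_R}(x,y) = E_x[T_{B_R}]$ and the bound $g_{B_R}(x,y) = E_x[a(X_{T_{B_R}}-y)] - a(x-y) \leq C\log R$ coming from \eqref{E:4.0} and \eqref{E:2.3.0} (note $a(x-y) \geq 0$ and $|x - y| \leq 2R$ on $B_R$, so $E_x[a(X_{T_{B_R}}-y)] \leq \tfrac2\pi \log(3R) + k + o(1)$). Assembling (i) and (ii) into \eqref{E:plan.moments} as above completes the proof.
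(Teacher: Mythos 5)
Your plan is correct and close in spirit to the paper's proof, but organized differently, and in one respect more careful. The paper runs Khas'minskii's iteration \eqref{E:2.exp2} for every $n$, quoting the uniform estimate $\sup_{x\in B_R}E_x[T_{B_R}]\le c_1R^2$ (its \eqref{E:2.exp3}) from the literature to get $E_0[T_{B_R}^n]\le n!\,(c_1R^2)^n$ for all $n$, and then expands $E_0[e^{-\epsilon T_{B_R}}]$ and sums a geometric series; you stop at $n=2$ (your identity $E_0[T^2]=2E_0[\int_0^T E_{X_s}[T]\,\mathrm{d}s]$ is exactly the $n=2$ case of \eqref{E:2.exp2}) and instead use $e^{-x}\le 1-x+\tfrac12 x^2$. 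The substantive difference is that you supply the lower bound $E_0[T_{B_R}]\ge c_1R^2$ explicitly; since $1-e^{-x}\le x$, the conclusion \eqref{E:2.exp1} actually forces such a bound, so this input is indispensable, and it is left implicit in the paper's very terse expansion step. Your invariance-principle argument for it is fine (an even quicker route: $|X_t|^2-t$ is a martingale, so optional stopping gives $E_0[T_{B_R}]=E_0[|X_{T_{B_R}}|^2]\ge R^2$).

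Two quantitative slips, neither of which harms the proof. In the Green-function variant of (i), by \eqref{E:4.0} and \eqref{E:2.3.0} one has $g_{B_R}(0,y)$ of order $\log(R/|y|)$, not of order $\log R$ throughout $B_{R/2}$; hence $\sum_{y\in B_{R/2}}g_{B_R}(0,y)\asymp R^2$ and your intermediate claim $E_0[T_{B_R}]\ge c_1R^2\log R$ is false (indeed $E_0[T_{B_R}]\asymp R^2$), although the bound $\ge c_1R^2$ that you actually need still comes out, and in any case you rely on the cruder argument. In (ii), bounding $g_{B_R}(x,y)\le C\log R$ uniformly and summing over the $O(R^2)$ sites of $B_R$ gives $\sup_{x\in B_R}E_x[T_{B_R}]\le CR^2\log R$, not $CR^2$ as stated; this is harmless because $\epsilon_0(R)$ may depend on $R$ arbitrarily (the extra $\log R$ merely shrinks it), and the clean $CR^2$ bound is in any case available, either as \eqref{E:2.exp3} or by summing the sharper estimate $g_{B_R}(x,y)\le \frac{2}{\pi}\log\bigl(CR/|x-y|\bigr)+C$ instead of the uniform bound.
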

\begin{proof}
Let $T=T_{B_R} \ (\geq 0)$, and $R \geq 1$. By a classical argument due to Khas'minskii, see for instance \cite{AS82}, Theorem 1.2., one writes, for $n \geq 1$,
\begin{equation}
\label{E:2.exp2}
\begin{split}
E_0[T^n ] &= n! E_0\Big[ \int_{0\leq t_1\leq \dots \leq t_n< \infty} \text d t_1\dots \text d t_n 1\{ T > t_n\} \Big]\\
&\leq n! \int_{0\leq t_1\leq \dots \leq t_{n-1}< \infty} \text d t_1\dots \text d t_{n-1} E_0\Big[ 1\{ T > t_{n-1}\} E_{X_{t_{n-1}}} [ T] \Big],
\end{split}
\end{equation}
where the second line follows from the simple Markov property (at time $t_{n-1}$). One knows, see for instance \cite{Sz12d}, above (1.22), that for suitable $c_1 \in (0,\infty)$,
\begin{equation}
\label{E:2.exp3}
\sup_{x\in B_R} E_x[T_{B_R}] \leq c_1R^2.
\end{equation}
On the event $\{ T> t_{n-1}\}$ appearing in \eqref{E:2.exp2}, one has $\{ X_{t_{n-1}} \in B_R\}$ and therefore, substituting \eqref{E:2.exp3} yields the estimate
\begin{equation}
\label{E:2.exp4}
E_0[T^n ] \leq  n! (c_1R^2)^n, \quad n \geq 0.
\end{equation}
The claim then follows by choosing, say, $\epsilon_0(R) = c_1R^2/2$, expanding the left-hand side of \eqref{E:2.exp1}, for $\epsilon \leq \epsilon_0(R)$, using \eqref{E:2.exp4} and summing the resulting geometric series, and noting that $(1+x)^{-1}\leq 1-x/2 $, for $x \in [0,1/2]$.
\end{proof}

We now introduce the Gaussian fields that will be relevant in the sequel. For $N \geq 1$, we define the probability measure $\mathbf{P}_N^G$ on $\overline{\Omega}= \mathbb{R}^{\mathbb{Z}^2}$, endowed with the product $\sigma$-algebra $\overline{\mathcal A}$ and canonical coordinates $\varphi_x: \overline{\Omega}\to \mathbb{R}$, $\overline{\omega} = (\overline{\omega}_y)_{y\in \mathbb{Z}^2} \mapsto \varphi_x(\overline{\omega}) = \overline{\omega}_x$ such that, under $\mathbf{P}_N^G$, the canonical field $\varphi=(\varphi_x)_{x\in \mathbb{Z}^2}$ is the centered Gaussian field with covariance
\begin{equation}
\label{E:7.9}
\mathbf{E}^G_N[\varphi_x\varphi_y]=g_{B_N}(x,y), \quad x,y \in \mathbb{Z}^2,
\end{equation}
with $g_{B_N}(\cdot,\cdot)$ denoting the Green function of simple random walk killed outside $B_N$, cf. \eqref{E:2.13}. In particular $\varphi_x= 0$ whenever $x \notin B_N$ under $\mathbf{P}_N^G$. We next recall the definition of the Gaussian free field $\varphi^{\,p}_{\cdot}$ pinned at the origin. Its law $\mathbf{P}^G$ on $\overline \Omega$ is such that
\begin{equation}
\label{E:2.16}
\begin{split}
&\text{under $\mathbf{P}^G$, $\varphi^{\,p}_x$, $x \in \mathbb{Z}^2$ is a centered Gaussian field with}\\
&\text{covariance function $\mathbf{E}^G[\varphi^{\,p}_x \varphi^{\,p}_y] = a(x)+ a(y)- a(y-x)$, for $x,y \in \mathbb{Z}^2$},
\end{split}
\end{equation}
with $a(\cdot)$ given by \eqref{E:2.1}. This covariance function is symmetric in $x$ and $y$ since $a(\cdot)$ is an even function, and $\varphi^{\,p}_0=0$ as $a(0)=0$. The relation between $\mathbf{P}_N^G$ and $\mathbf{P}^G$ is the following. Using \eqref{E:4.0} and \eqref{E:2.2}, one can show (cf. also \eqref{E:12.2} below for a similar calculation) that 
\begin{equation}
\label{E:2.17}
\mathbf{E}^G[\varphi^{\,p}_x \varphi^{\,p}_y] = \lim_{N \to \infty} \mathbf{E}_N^G[(\varphi_x-\varphi_0)(\varphi_y - \varphi_0)],
\end{equation}
which gives an interpretation of $\varphi^{\,p}_{\cdot}$ as the limiting field of increments of $\varphi_{\cdot}$  at the origin under $\mathbf{P}_N^G$, as $N \to \infty$.

We conclude this section with some considerations at positive mass. The following results won't be needed until Section \ref{S:mass}. We introduce, for a parameter $\epsilon > 0$,
\begin{equation}
\label{E:10.0}
\xi \equiv \xi(\epsilon) \text{ an exponential random variable with parameter $\epsilon$},
\end{equation}
and, whenever needed, tacitly enlarge our canonical space such that $P_x$, cf. above \eqref{E:2.1}, also carries $ \xi(\epsilon)$ as in \eqref{E:10.0}, independently of $X_{\cdot}$. We define the (Green) function
 \begin{equation}
\label{eq:GF0}
\begin{split}
g_{\epsilon}(x,y) &= E_{x}\Big[\int_0^{\xi(\epsilon)} \text d t \, 1\{ X_t =y\}\Big], \qquad x, y \in \mathbb{Z}^2, \, \epsilon > 0.
\end{split}
\end{equation}
For all $\epsilon > 0$ and all $x, y$, $g_{\epsilon}(x,y)$ is finite, and $g_{\epsilon}(\cdot,\cdot)$  is the kernel of a positive definite, symmetric operator (on $\ell^2(\mathbb{Z}^d)$). It can thus be viewed as the covariance function of a centered Gaussian field, whose law $\mathbf{P}^G_{\epsilon}$ on $\overline{\Omega}$, cf. above \eqref{E:7.9}, is such that
\begin{equation}
\label{E:massGFF}
\mathbf{E}^G_{\epsilon}[\varphi^{\epsilon}_x\varphi^{\epsilon}_y] = g_{\epsilon}(x,y), \quad \text{for all }x,y \in \mathbb{Z}^2
\end{equation}
(with $\varphi_x^{\epsilon}$ denoting canonical coordinates on $\overline{\Omega}$, as before, the superscript is merely to avoid confusion).

\begin{lem}$(x,y \in \mathbb{Z}^2, \epsilon > 0)$
\label{L:10.20}
\begin{align}
&\lim_{ \epsilon \to 0^+} \big( g_{\epsilon}(y,y) - g_{\epsilon}(y,x) \big) = a(x-y). \label{E:10.20}
\end{align}
\end{lem}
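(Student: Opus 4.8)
The plan is to express both Green functions $g_\epsilon(y,y)$ and $g_\epsilon(y,x)$ in terms of the transition density $q_t$ of the continuous-time walk, and then identify the limit of the difference with the defining series \eqref{E:2.1} for $a(\cdot)$. Concretely, since $\xi(\epsilon)$ is exponential with parameter $\epsilon$ and independent of $X_\cdot$, integrating out $\xi$ gives
\begin{equation}
\label{E:plan1}
g_\epsilon(x,y) = \int_0^\infty e^{-\epsilon t}\, q_t(y-x)\, \mathrm{d}t,
\end{equation}
using translation invariance of the walk ($P_x[X_t=y]=q_t(y-x)$, with $q_t$ as in \eqref{E:2.3.5} up to the obvious identification $q_t(z)=P_0[X_t=z]$). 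Hence
\begin{equation}
\label{E:plan2}
g_\epsilon(y,y) - g_\epsilon(y,x) = \int_0^\infty e^{-\epsilon t}\,\big(q_t(0) - q_t(x-y)\big)\, \mathrm{d}t.
\end{equation}
Writing $z=x-y$, the task is to show $\int_0^\infty e^{-\epsilon t}(q_t(0)-q_t(z))\,\mathrm{d}t \to a(z)$ as $\epsilon\to 0^+$.

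The natural route is to first establish that $a(z) = \int_0^\infty (q_t(0)-q_t(z))\,\mathrm{d}t$, the continuous-time analogue of \eqref{E:2.1}, together with absolute integrability $\int_0^\infty |q_t(0)-q_t(z)|\,\mathrm{d}t < \infty$; once that is in hand, the conclusion follows immediately from dominated convergence (since $e^{-\epsilon t}(q_t(0)-q_t(z))$ is dominated in absolute value by $|q_t(0)-q_t(z)|$, and converges pointwise to $q_t(0)-q_t(z)$ as $\epsilon \to 0^+$). The continuous-time identity can be obtained by conditioning on the Poisson number of jumps $N_t$: since $X_t = Z_{N_t}$ with $N_t$ Poisson($t$) independent of the skeleton, $q_t(z) = \sum_{k\ge 0} e^{-t} \frac{t^k}{k!} p_k(z)$, so
\begin{equation}
\label{E:plan3}
\int_0^\infty \big(q_t(0)-q_t(z)\big)\,\mathrm{d}t = \sum_{k\ge 0} \big(p_k(0)-p_k(z)\big) \int_0^\infty e^{-t}\frac{t^k}{k!}\,\mathrm{d}t = \sum_{k\ge 0}\big(p_k(0)-p_k(z)\big),
\end{equation}
which is exactly $a(z)$ by \eqref{E:2.1}; the interchange of sum and integral is justified by \eqref{E:2.1.2} after noting $\int_0^\infty e^{-t}t^k/k!\,\mathrm{d}t = 1$, and a parallel computation with absolute values inside, using $\sum_k e^{-t}t^k/k! = 1$, gives $\int_0^\infty |q_t(0)-q_t(z)|\,\mathrm{d}t \le \sum_k |p_k(0)-p_k(z)| < \infty$, settling the domination.

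The main obstacle, such as it is, lies in the tail behaviour near $t=\infty$: one must be sure the integral $\int_0^\infty(q_t(0)-q_t(z))\,\mathrm{d}t$ genuinely converges (recurrence makes each of $\int q_t(0)$ and $\int q_t(z)$ diverge), and that the Fubini/Tonelli manipulations are legitimate. Both issues are controlled by the absolute summability \eqref{E:2.1.2}, which the excerpt already records; alternatively one could invoke the heat-kernel gradient bound \eqref{E:2.3.6}--\eqref{E:2.3.8}, which yields $|q_t(0)-q_t(z)| \le c|z|/t^2$ for $t\ge 1$ directly and hence integrability of the tail, with the small-$t$ part handled trivially since $q_t \le 1$. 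Either way, once integrability is secured the dominated convergence step is routine and the lemma follows. A minor point to note in passing is that the limit $a(x-y)$ is symmetric in $x,y$ (consistent with $a(\cdot)$ being even), as it must be since $g_\epsilon$ is symmetric.
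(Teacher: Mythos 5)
Your proposal is correct and follows essentially the same route as the paper: write $g_\epsilon(x,y)=\int_0^\infty e^{-\epsilon t}q_t(y-x)\,\mathrm{d}t$, establish the continuous-time representation $a(z)=\int_0^\infty(q_t(0)-q_t(z))\,\mathrm{d}t$ together with absolute integrability via the Poisson skeleton relation $X_t=Z_{N_t}$ and \eqref{E:2.1.2}, and then pass to the limit $\epsilon\to 0^+$. The only (minor) divergence is in the last step: the paper splits the error $\int_0^\infty(1-e^{-\epsilon t})(q_t(0)-q_t(z))\,\mathrm{d}t$ at $t=1$ and controls the tail with the heat-kernel gradient estimate \eqref{E:2.3.6}--\eqref{E:2.3.8}, whereas you invoke dominated convergence directly, using the already-established bound $\int_0^\infty|q_t(0)-q_t(z)|\,\mathrm{d}t\le\sum_k|p_k(0)-p_k(z)|<\infty$ as the dominating function; this is a legitimate and slightly leaner way to conclude, and you correctly note the gradient-bound route as an alternative.
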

\begin{proof}
Since $ g_{\epsilon}(\cdot,\cdot)$ is translation invariant, i.e., $g_{\epsilon}(x,y)= g_{\epsilon}(x+z,y+z)$, for all $x,y,z \in \mathbb{Z}^2$, it is enough to show \eqref{E:10.20} for $y=0$, $x \neq 0$. Recalling \eqref{E:2.3.5}, \eqref{E:2.3.7}, the potential kernel $a(\cdot)$ can be expressed in terms of the continuous-time heat kernel as
\begin{equation}
 \label{E:10.21}
a(x)=  \int_0^\infty \text d t \, \big( - \nabla_x q_t (0)\big), \quad \text{ for } x \in \mathbb{Z}^2.
\end{equation}
Indeed, \eqref{E:10.21} can be obtained as follows.  Writing $X_t=Z_{N_t}$, one notes that $q_t(0) - q_t(x) = E_0[p_{N_t}(0)-p_{N_t}(x)]$, with $p_n(\cdot)$ denoting the discrete-time kernel, cf. \eqref{E:2.1.p}, and therefore, applying Fubini's theorem and monotone convergence,
\begin{equation}
\label{E:10.22}
 \int_0^\infty \text d t \, |q_t(0) - q_t(x)|  \leq E_0\Big[ \sum_{k \geq 0} \tau_k |p_{k}(0)-p_{k}(x)|\Big]= \sum_{k \geq 0} |p_{k}(0)-p_{k}(x)| \stackrel{\eqref{E:2.1.2}}{<} \infty
\end{equation}
where $\tau_k$, $k \geq 0$ are i.i.d exponential random variables with parameter $1$ under $P_x$ (constituting $N_{\cdot}$, i.e. such that $N_t = \sup \{ \ell \geq 0 : \, \sum_{k \leq \ell} \tau_k \leq t \}$, for $t \geq 0$). By \eqref{E:10.22}, it follows that the integral on the right-hand side of \eqref{E:10.21} is well-defined and finite, and from \eqref{E:2.1} and dominated convergence, along with \eqref{E:10.22}, that it equals $a(x)$.

Returning to the Green function $g_{\epsilon}$ given by \eqref{eq:GF0}, performing the integral over $\xi(\epsilon)$ and applying Fubini (the relevant quantities are non-negative) yields
\begin{equation}
\label{E:10.23}
g_{\epsilon}(x,y)= E_x\Big[ \int_0^\infty \text d t \, e^{-\epsilon t} 1\{ X_t =y \} \Big] = \int_0^\infty \text d t \, e^{-\epsilon t} q_t(y-x).
\end{equation}
By \eqref{E:10.21} and \eqref{E:10.23}, we see that \eqref{E:10.20} follows at once if we show that
\begin{equation}
\label{E:10.24}
\int_1^\infty \text d t \, (1-e^{-\epsilon t}) \big(- \nabla_x q_t(0)\big) \rightarrow 0, \quad \text{ as } \epsilon \to 0^+
\end{equation}
(if one replaces the lower integration bound in \eqref{E:10.24} by $0$, the resulting expression is precisely $a(x)-(g_{\epsilon}(0,0) - g_{\epsilon}(0,x))$, and the integral from $0$ to $1$  is readily seen to vanish as $\epsilon \to 0^+$ by dominated convergence, since $| \nabla_x q_t| \leq 2$). Applying the gradient estimate \eqref{E:2.3.6}, and noting that the resulting error term, which decays as $t^{-2}$, is in $L^1([1,\infty))$, it suffices to show \eqref{E:10.24} with $\bar q_t$ in place of $q_t$. Finally, writing $c_x = c' |x|^2/2$, with $c'$ as appearing in \eqref{E:2.3.8}, and $F(\lambda)= e^{-c_x \lambda}$, we see that, for all $t \geq 1$,
$$
|\nabla_x \bar q_t(0)| = \frac{c}{t} \Big| F(0)-F\Big(\frac 1t\Big) \Big| \leq  \frac{c}{t^2} \sup_{s\leq \frac{1}{t}} |F'(s)| \leq \frac{\tilde c(x)}{t^2},
$$
and inserting this into the corresponding integral, \eqref{E:10.24} follows by dominated convergence.
\end{proof}

As consequence of Lemma \ref{L:10.20}, the pinned field $\varphi^{\,p}_{\cdot}$, cf. \eqref{E:2.16}, is also obtained by considering suitable increments of the massive field $\mathbf{P}_{\epsilon}^G$ defined in \eqref{E:massGFF} and removing the mass (cf. \eqref{E:2.17}):
\begin{equation}
\label{E:12.1}
\mathbf{E}^G[\varphi^{\,p}_x \varphi^{\,p}_y] = \lim_{\epsilon \to 0^+} \mathbf{E}^G_{\epsilon}[(\varphi_x^{\epsilon}-\varphi^{\epsilon}_0)(\varphi_y^{\epsilon} - \varphi^{\epsilon}_0)].
\end{equation}
To see this, one simply writes
\begin{equation}
\label{E:12.2}
\begin{split}
&\mathbf{E}^G_{\epsilon}[(\varphi^{\epsilon}_x-\varphi^{\epsilon}_0)(\varphi^{\epsilon}_y - \varphi^{\epsilon}_0)] \stackrel{\eqref{E:massGFF}}{=} g_{\epsilon}(x,y)- g_{\epsilon}(0,x) - g_{\epsilon}(0,y) + g_{\epsilon}(0,0)\\[0.2em]
& = \big(g_{\epsilon}(x,y) - g_{\epsilon}(x,x)\big) + \big(g_{\epsilon}(x,x) - g_{\epsilon}(0,x)\big) + \big(g_{\epsilon}(0,0) - g_{\epsilon}(0,y)\big).
\end{split}
\end{equation}
Letting $\epsilon \to 0$ and applying \eqref{E:10.20}, the first term in the second line of \eqref{E:12.2} converges to $-a(x-y) =- a(y-x)$, the second one to $a(x)$ and the third one to $a(y)$. In view of \eqref{E:2.16}, \eqref{E:12.1} follows.

\section{Capacity, avoiding sets and Dirichlet b.c.}\label{S:2}

The following representation of the two-dimensional capacity (recall its definition in \eqref{E:2.12}) will prove useful below. In particular, it will naturally lead us to consider point processes avoiding a given set $K$ and killed when exiting a larger set $K'$. These can be obtained by suitable thinning operations, see Theorem \ref{T:2} below. For the time being, we focus on Dirichlet boundary condition - but see Remark \ref{R:cap},2) below. We write $B_N$, $N \geq 1$, for the Euclidean ball of radius $N$ around $0$ in $\mathbb{Z}^2$.
\begin{lem}
\label{L:4.cap} $(\emptyset \neq A \subset \subset \mathbb{Z}^2)$

\medskip
\noindent For any point $y \in A$,
\begin{equation}
\label{E:4.1}
\textnormal{cap}(A)= \frac{4}{\pi^2} \lim_{N \to \infty} ( \log N )^2 \sum_{x\in A} e_{A,B_N}(x) \,  P_x[H_y > T_{B_N}].
\end{equation}
\end{lem}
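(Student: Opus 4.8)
The plan is to start from the classical characterization of harmonic measure in terms of escape probabilities, namely \eqref{E:2.11}, and to rewrite the right-hand side of \eqref{E:4.1} so that the capacity formula \eqref{E:2.12} emerges. Fix $y \in A$. The key observation is that the factor $P_x[H_y > T_{B_N}]$ appearing in \eqref{E:4.1} should, after multiplication by one power of $\frac{2}{\pi}\log N$, converge to $a(x-y)$ — indeed, by the last-exit (or strong Markov) decomposition at the hitting time of $y$,
\begin{equation*}
1 = P_x[H_y > T_{B_N}] + P_x[H_y < T_{B_N}],
\end{equation*}
and $P_x[H_y < T_{B_N}] = g_{B_N}(x,y)/g_{B_N}(y,y)$ by the usual Green-function identity for the killed walk. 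Using the representation \eqref{E:4.0}, $g_{B_N}(y,y) = E_y[a(X_{T_{B_N}}-y)] = \frac{2}{\pi}\log N + O(1)$ (since the walk exits $B_N$ near $\partial B_N$ and $a(z) = \frac{2}{\pi}\log|z| + k + O(|z|^{-2})$ by \eqref{E:2.3.0}), and similarly $g_{B_N}(x,y) = E_x[a(X_{T_{B_N}}-y)] - a(x-y) = \frac{2}{\pi}\log N + O(1) - a(x-y)$. Hence
\begin{equation*}
P_x[H_y > T_{B_N}] = 1 - \frac{g_{B_N}(x,y)}{g_{B_N}(y,y)} = \frac{a(x-y) + O(1)}{\frac{2}{\pi}\log N + O(1)},
\end{equation*}
so that $\frac{2}{\pi}(\log N)\, P_x[H_y > T_{B_N}] \to a(x-y)$ as $N \to \infty$, for each fixed $x \in A$.

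The remaining power of $\frac{2}{\pi}\log N$ is absorbed by the equilibrium measure: by definition \eqref{E:2.14}, $e_{A,B_N}(x) = P_x[\widetilde H_A > T_{B_N}]\mathbf{1}_A(x)$, and \eqref{E:2.11} gives precisely $\frac{2}{\pi}(\log N)\, e_{A,B_N}(x) \to \mathrm{hm}_A(x)$ for each $x \in A$. Since $A$ is finite, the sum over $x \in A$ is finite, so I can pass to the limit term by term:
\begin{equation*}
\frac{4}{\pi^2}(\log N)^2 \sum_{x \in A} e_{A,B_N}(x)\, P_x[H_y > T_{B_N}] = \sum_{x\in A}\Big(\tfrac{2}{\pi}(\log N)\,e_{A,B_N}(x)\Big)\Big(\tfrac{2}{\pi}(\log N)\,P_x[H_y>T_{B_N}]\Big) \longrightarrow \sum_{x\in A} \mathrm{hm}_A(x)\, a(x-y),
\end{equation*}
which is exactly $\mathrm{cap}(A)$ by \eqref{E:2.12} (using $y \in A$). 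Note $H_y = \widetilde H_y$ when $x \neq y$, and when $x = y$ the term $P_y[H_y > T_{B_N}] = 1$ contributes $\tfrac{2}{\pi}(\log N)\, e_{A,B_N}(y) \cdot \tfrac{2}{\pi}\log N$; but $e_{A,B_N}(y) \to 0$ like $1/\log N$ only if $y$ is not on the inner boundary — in fact $\mathrm{hm}_A(y) a(0) = 0$, so one must check this boundary term vanishes, which follows since $\tfrac{2}{\pi}(\log N) e_{A,B_N}(y) \to \mathrm{hm}_A(y)$ is bounded while the other factor also needs care; the cleanest fix is to observe $a(x-y)$ with the $x=y$ term simply contributes $0$ in the limit formula \eqref{E:2.12} anyway, so it suffices that $e_{A,B_N}(y)\,\frac{2}{\pi}\log N$ stays bounded, which it does.

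The main obstacle is making the two limit statements \emph{quantitative and uniform enough in $x \in A$} to justify the term-by-term passage — though since $A$ is a fixed finite set this is a non-issue once each pointwise limit is established, so really the only genuine work is the $O(1)$ control on $g_{B_N}(x,y)$ and $g_{B_N}(y,y)$ via \eqref{E:4.0} and the asymptotics \eqref{E:2.3.0}, together with a clean treatment of the degenerate $x=y$ term. I expect the proof to be short, modulo citing \eqref{E:2.11} for the harmonic-measure convergence and \eqref{E:4.0}–\eqref{E:2.3.0} for the Green-function estimate.
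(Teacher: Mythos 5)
Your argument is correct in its main thrust, but it takes a genuinely different route from the paper's proof. You factor the summand and pass to the limit in each factor separately: $\frac{2}{\pi}(\log N)\,e_{A,B_N}(x)\to \mathrm{hm}_A(x)$ by \eqref{E:2.11}, and $\frac{2}{\pi}(\log N)\,P_x[H_y>T_{B_N}]\to a(x-y)$, which you derive from $P_x[H_y<T_{B_N}]=g_{B_N}(x,y)/g_{B_N}(y,y)$ together with \eqref{E:4.0} and \eqref{E:2.3.0}; since $A$ is finite, summing the products gives $\sum_x \mathrm{hm}_A(x)a(x-y)=\mathrm{cap}(A)$. The paper instead proves an \emph{exact} finite-$N$ identity: starting from \eqref{E:2.12}, \eqref{E:2.11} and \eqref{E:4.2} it writes $\mathrm{cap}(A)$ as a limit involving $\sum_x (g_{B_N}(y,y)-g_{B_N}(y,x))e_{A,B_N}(x)$, and then shows via the last-exit decomposition \eqref{E:4.4} and the sweeping identity \eqref{E:2.15} that this sum equals $g_{B_N}(y,y)\sum_x e_{A,B_N}(x)P_x[H_y>T_{B_N}]$ before any limit is taken, concluding with $g_{B_N}(y,y)\sim\frac{2}{\pi}\log N$. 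Your route is essentially the alternative the paper itself sketches in Remark \ref{R:cap}, 1), where the key limit $\frac{2}{\pi}(\log N)P_x[H_y>T_{B_N}]\to a(x-y)$ (there obtained by optional stopping of the martingale $a(X_{\cdot}-y)$) is recorded as \eqref{E:4.7}; your Green-function derivation of it is equally valid. What the paper's approach buys is the algebraic identity \eqref{E:4.6} valid at every finite $N$; what yours buys is brevity.

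Two local points need fixing, though neither invalidates the proof. First, in the displayed estimate the numerator must be $a(x-y)+o(1)$, not $a(x-y)+O(1)$: as written, multiplying by $\frac{2}{\pi}\log N$ would only give $a(x-y)+O(1)$. The needed cancellation is immediate from \eqref{E:2.3.0}, since both $E_y[a(X_{T_{B_N}}-y)]$ and $E_x[a(X_{T_{B_N}}-y)]$ equal $\frac{2}{\pi}\log N+k+o(1)$ (the exit point lies at distance $N+O(1)$ from $x$ and from $y$), so their difference is $o(1)$ — but you should say this. Second, your treatment of the $x=y$ term is confused: with the paper's convention that $H_y$ is the \emph{entrance} time, $P_y[H_y>T_{B_N}]=0$ (the walk starts at $y$), not $1$, so that summand vanishes identically and matches $\mathrm{hm}_A(y)a(0)=0$ with no further argument; and note that your proposed "fix'' (boundedness of $\frac{2}{\pi}(\log N)e_{A,B_N}(y)$) would not have saved the term had the probability really been $1$, since the remaining factor $\frac{2}{\pi}\log N$ diverges. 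Once these are cleaned up, the proof is complete.
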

\begin{proof}
Fix a point $y \in A$.
Using \eqref{E:4.0}, \eqref{E:2.2} and \eqref{E:2.3}, one immediately sees that for all $x \in A$,
\begin{equation}
\label{E:4.2}
\lim_{N \to \infty} \big( g_{B_N}(y,y) - g_{B_N}(y,x) \big) = a(x-y).
\end{equation}
In view of the definition \eqref{E:2.12} and by virtue of \eqref{E:2.11}, one obtains, using \eqref{E:4.2},
\begin{equation}
\label{E:4.3}
\textnormal{cap}(A)=  \frac2\pi \lim_{N \to \infty} \log N \sum_{x\in A} \big(g_{B_N}(y,y) - g_{B_N}(y,x) \big)\,  P_x[\widetilde{H}_A > T_{B_N}].
\end{equation}
We will now show that the right-hand sides of \eqref{E:4.1} and \eqref{E:4.3} are equal. We tacitly assume henceforth that $N$ is large enough so that $\overline{A} \subset  B_N$, where $\overline{A}= A \cup \partial A$, $\partial A =\{ x\in \mathbb{Z}^2\setminus A: \, |x-y|=1 \text{ for some } y \in A  \}$.
By last exit decomposition for the walk killed outside of $T_{B_N}$, we know that for all $K \subset B_N$, $z \in \mathbb{Z}^2$, 
\begin{equation}
\label{E:4.4}
P_{z} [H_K < T_{B_N}] = \sum _{x\in K} g_{B_N} (z,x) P_x[\widetilde{H}_K \geq T_{B_N}] =   \sum _{x\in K} g_{B_N} (z,x) e_{K, B_N}(x),
\end{equation}
hence, we can rewrite
\begin{equation}
\label{E:4.5}
\begin{split}
&\Big( \sum_{x \in A} e_{A,B_N}(x) \Big)- e_{\{y \},B_N}(y) \\
&\quad \quad \quad = \frac{1}{g_{B_N}(y,y)} \sum_{x\in A}\big( g_{B_N}(y,y) - g_{B_N}(z,x)\big) e_{A,B_N}(x)\\
&\qquad \qquad \qquad+ \Big( \frac{1}{g_{B_N}(y,y)} \sum_{x\in A} g_{B_N}(z,x) e_{A,B_N}(x)\Big) - e_{\{y\},B_N}(y)\\
& \quad \quad \quad= \frac{1}{g_{B_N}(y,y)} \sum_{x\in A}\big( g_{B_N}(y,y) - g_{B_N}(z,x)\big) e_{A,B_N}(x) \\
&\qquad \qquad \qquad+   \frac{1}{g_{B_N}(y,y)} \Big( P_z[H_A < \infty] - \frac{g_{B_N}(y,y)}{g_{B_N}(z,y)}P_z[H_y < \infty] \Big),
\end{split}
\end{equation}
using \eqref{E:4.4} twice to obtain the last equality. Choosing $z=y$ and recalling that $A$ contains $y$, the last term in \eqref{E:4.5} vanishes and we deduce, applying the sweeping identity \eqref{E:2.15}, which implies that $e_{\{ y \},B_N}(y)= P_{e_{A,B_N}}[H_y < T_{B_N}]$, 
\begin{equation}
\label{E:4.6}
\begin{split}
&\sum_{x\in A}\big( g_{B_N}(y,y) - g_{B_N}(y,x)\big) e_{A,B_N}(x) \\
&\quad =  g_{B_N}(y,y) \bigg[ \Big( \sum_{x \in A} e_{A,B_N}(x) \Big)- e_{\{y\},B_N}(y) \bigg]
= g_{B_N}(y,y)  \sum_{x \in A}  e_{A,B_N}(x) P_x[H_y > T_{B_N}]. 
\end{split}
\end{equation}
Substituting \eqref{E:4.6} into \eqref{E:4.3}, and noting that $g_{B_N}(y,y) \sim g_{B_N}(0,0)$ and $g_{B_N}(0,0) = E_0[a(X_{T_{B_N}})] \sim \frac2\pi \log N$, on account of \eqref{E:2.3.0}, \eqref{E:2.3} and \eqref{E:4.0}, completes the proof.
\end{proof}

\begin{rmk}\label{R:cap} 1) Instead of working with the Green function, see \eqref{E:4.2}, one can also derive \eqref{E:4.1} with a martingale argument. Namely, assuming $x \neq y$ and $N$ is large enough such that $x,y \in B_N$, and introducing the stopping time $S= H_y \wedge T_{B_N}$, it follows from \eqref{E:2.1.1} that $(a(X_{s\wedge S} -y))_{s\geq 0}$ is a bounded martingale under $P_x$, hence by the stopping theorem,
\begin{equation*}
P_x[H_y> T_{B_N}] = \frac{a(x-y)}{E_x[a(X_S) \, | \, H_y> T_{B_N}]},
\end{equation*}
and together with \eqref{E:2.3.0}, this yields that
\begin{equation}
\label{E:4.7}
a(x-y)=   \lim_{N \to \infty} \frac2\pi (\log N) \, P_x[H_y> T_{B_N}], \quad \text{ for all $x,y \in B_N$}
\end{equation} 
(if $x = y$, equality also holds since both sides vanish). In particular, as follows from \eqref{E:4.7}, the presence of the square in \eqref{E:4.1} is due to the fact that each of the two probabilities appearing in the sum ``requires'' a normalizing factor of order $\log N$. \\ 
\noindent 2) The representation \eqref{E:4.1} has an analogue in terms of ``massive'' quantities, see Proposition \ref{P:10.4} and Remark \ref{R:mass}, 1) below. \\
\noindent 3) It is instructive to compare \eqref{E:4.1} to the higher-dimensional setting, for which one defines  (see for instance \cite{La91}, Ch.2.2)
$$\text{cap}(A)= \lim_{N \to \infty}\sum_{x \in A} e_{A,B_N}(x)$$ (which of course vanishes in dimension $2$ by recurrence). As will soon become clear, foregoing the prefactor $(\frac2\pi \log N)^2$, the summation in \eqref{E:4.1} hints at a loop soup of bidirectional trajectories killed outside of $B_N$ whose backward part is conditioned to stay away from $A$ (until exiting $B_N$) and whose forward part is conditioned to avoid~$0$.~\hfill~$\square$
\end{rmk}
\medskip
We now define a suitable family of finite-volume interlacement point processes, which avoid a given set $K$ and are killed when exiting a larger set $K'$. To this effect, we introduce a point
\begin{equation}
\label{E:5.0}
x_{*} \notin \mathbb{Z}^2,
\end{equation}
and think of a killed random walk trajectory  as one reaching the (cemetery) point $x_*$ upon being killed and remaining in $x_*$ from then on forever. The spaces on which the intensity measures of the relevant processes are defined require a small amount of notation. We thus introduce the spaces $W$, resp. $W^+$ of bi-infinite,  resp. infinite $\mathbb{Z}^2 \cup\{ x_*\}$-valued right-continuous trajectories (and left-continuous at negative times in the bi-infinite case), whose forward and backward part make finitely many jumps before reaching $x_*$ and from then on remain equal to $x_*$. With regards to the next section, we also include in $W$, $W^+$ the set of $\mathbb{Z}^2$-valued trajectories (with no killing) escaping all finite sets in finite time (in the case of $W$, we require this to hold separately for the forward and backward part of any trajectory). We write $\mathcal{W}$, $\mathcal{W}^+$, $\theta_t$, $t \in \mathbb{R}$ (resp. $t\geq 0$) for the corresponding canonical $\sigma$-algebras and canonical shifts and $W^*$ for the quotient of $W$ modulo time-shift, with induced $\sigma$-algebra $\mathcal{W}^*$.  We denote by $\pi^*:W\to W^* $ the corresponding canonical projection. We write $W_A \subset W$ (resp. $W_A^+ \subset W^+$) for the (forward) trajectories entering $A$, 
for $A \subset \mathbb{Z}^2$, and $W_A^* = \pi^* (W_A)$.

We now define a measure $Q_A^{K,K'}$, for any pair $K,K'$ with $K \subset K' \subset \subset \mathbb{Z}^2$, and all $A$ satisfying $K \subset A \subset K' $, on $(W, \mathcal{W})$ supported on $W_A$ (in fact, supported on those trajectories in $W_A$ entering $A$ at time $0$) by
\begin{equation}
\label{E:5.1}
\begin{split}
&Q_A^{K,K'} [(X_{-t})_{t\geq 0} \in E^-, \, X_0 = x, \,(X_{t})_{t\geq 0} \in E^+ ]\\
&\qquad  =P_x[(X_{-t})_{0\leq t\leq T_{K'}} \in E^- \,| \, \widetilde{H}_A > T_{K'}] \, \rho_{A}^{K,K'}(x) \, P_x[(X_{t})_{0\leq t\leq T_{K'}} \in E^+ \, | \,H_K > T_{K'}]
\end{split}
\end{equation}
for all $ E^-,  E^+ \in \mathcal{W}^+$, $x \in A$, and with (cf. \eqref{E:2.14} for notation)
\begin{equation}
\label{E:5.2}
\rho_{A}^{K,K'}(x) \stackrel{\text{def.}}{=} e_{A,K'}(x) \,  P_x[H_K > T_{K'}] 1_A(x), \text{ for } x \in \mathbb{Z}^2.
\end{equation}
For $K =\emptyset$, we set $H_{\emptyset}= \infty$, so that $\{ H_K > T_{K'} \}= W$ and $Q_A^{\emptyset,K'}$, $\rho_{A}^{\emptyset,K'}$ are well-defined.
The following theorem asserts that the measures $Q_A^{K,K'}$ can be patched together in a consistent fashion. With hopefully obvious notation, for a measure $\mu$ on a space $(S,\mathcal{S})$ and $A \in \mathcal{S}$, we write $(1_A \mu ) (\cdot)\equiv \mu (\cdot \cap A)$ for the restriction of the $\mu$ to the set $A$ (its density with respect to $\mu$ is $1_A$).

\begin{thm} $(K \subset K' \subset \subset \mathbb{Z}^2)$ \label{T:2}

\medskip
\noindent There exists a unique finite measure $\nu^{K,K'}$ on $(W^*, \mathcal{W}^*)$ such that
\begin{equation}
\label{E:5.3}
1_{W_A^*}  \ \nu^{K,K'} = \pi^* \circ Q_A^{K,K'}, \text{ for all }K \subset A \subset K'. 
\end{equation}
\end{thm}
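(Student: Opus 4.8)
The statement is a standard ``consistency yields a unique $\sigma$-finite (here finite) measure'' result, of the type familiar from the construction of random interlacements in \cite{Sz10}. The plan is to verify the compatibility relation
\begin{equation*}
\pi^* \circ \big( 1_{W_{A'}} \, Q_A^{K,K'} \big) = \pi^* \circ Q_{A'}^{K,K'}, \quad \text{for all } K \subset A' \subset A \subset K',
\end{equation*}
and then invoke an elementary projective-limit (Kolmogorov-type) argument to patch the $Q_A^{K,K'}$ into a single measure $\nu^{K,K'}$ on $(W^*,\mathcal W^*)$ satisfying \eqref{E:5.3}; uniqueness is immediate since the sets $W_A^*$, $K\subset A\subset K'$, generate $\mathcal W^*$ (every trajectory in $W^*$ enters $K'$, hence lies in some $W_A^*$), and finiteness follows because $\nu^{K,K'}(W^*) = Q_{K'}^{K,K'}(W) = \sum_{x} \rho_{K'}^{K,K'}(x) < \infty$ by \eqref{E:5.2}.

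The heart of the matter is therefore the compatibility identity at the level of the un-quotiented measures, suitably interpreted. First I would pin down what ``$\pi^*\circ(1_{W_{A'}}Q_A^{K,K'})$'' means concretely: a trajectory modulo time-shift in $W_{A'}^*$ has a canonical representative parametrized so that it enters $A'$ at time $0$, whereas $Q_A^{K,K'}$ is supported on trajectories entering $A$ at time $0$. So the identity to prove is that, for a bi-infinite trajectory sampled under $Q_A^{K,K'}$ and conditioned to hit $A'$ (an event of the stated $\rho$-mass), if one re-centers time at the first entrance into $A'$, the resulting law equals $Q_{A'}^{K,K'}$. Concretely, one computes: the entrance point $X_0$ into $A$ is distributed as $\rho_A^{K,K'}$; running the forward part (a walk from $X_0$ conditioned on $\{H_K > T_{K'}\}$ and killed at $T_{K'}$) one records the first hitting point $z$ of $A'$; the backward part from $X_0$ is a walk conditioned on $\{\widetilde H_A > T_{K'}\}$ and killed at $T_{K'}$, which in particular never returns to $A\supset A'$, so the first entrance into $A'$ of the whole bi-infinite path is indeed this forward hitting point $z$. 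The key algebraic step is the sweeping/last-exit computation showing that the resulting entrance measure is exactly $\rho_{A'}^{K,K'}$: using the strong Markov property of the forward walk at $H_{A'}$ together with the sweeping identity \eqref{E:2.15} for the equilibrium measures $e_{A,K'} \to e_{A',K'}$, and the tower property for the conditioning events $\{H_K > T_{K'}\}$, one gets $\sum_{x} \rho_A^{K,K'}(x)\, P_x[H_{A'} < T_{K'}, H_K > T_{K'}, X_{H_{A'}} = z] = \rho_{A'}^{K,K'}(z)$. Granting the entrance measure matches, the forward part from $z$ is again a walk conditioned on $\{H_K > T_{K'}\}$ killed at $T_{K'}$ by the strong Markov property, and the backward part from $z$ — obtained by concatenating the reversed forward excursion of the walk from $X_0$ up to $H_{A'}$ with the original backward part — is, by reversibility of simple random walk (the walk killed outside $K'$ is reversible with respect to counting measure, so the Green function and the relevant excursion measures are symmetric), distributed as a walk from $z$ conditioned on $\{\widetilde H_{A'} > T_{K'}\}$ and killed at $T_{K'}$. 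This is precisely the description \eqref{E:5.1} of $Q_{A'}^{K,K'}$.

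Once the compatibility relation is established, patching is routine: one can either argue directly that $\{\pi^*\circ Q_A^{K,K'}\}_A$ forms a projective family along the inclusions $W_{A'}^* \hookrightarrow W_A^*$ and take the inductive limit, or — more simply, since there is a maximal set $K'$ — define $\nu^{K,K'} := \pi^*\circ Q_{K'}^{K,K'}$ and check \eqref{E:5.3} for general $A$ by restricting to $W_A^*$ and applying the compatibility identity with $A' = A$, $A \leadsto K'$. The second route makes existence almost immediate and isolates all the work into the single identity of the previous paragraph.

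The main obstacle I anticipate is the careful bookkeeping in that identity, specifically the reversibility argument for the backward part: one must check that reversing a finite excursion of the walk killed outside $K'$ and grafting it onto the backward trajectory produces exactly the conditioned-backward law appearing in \eqref{E:5.1}, including getting the normalizations (the conditioning constants $P_x[\widetilde H_A > T_{K'}]$ versus $P_z[\widetilde H_{A'} > T_{K'}]$) to cancel correctly against the ratio of equilibrium measures supplied by the sweeping identity. This is the standard but slightly delicate ``the interlacement intensity measure is consistent'' computation; everything else (uniqueness via the generating $\pi$-system $\{W_A^*\}$, finiteness via \eqref{E:5.2}) is formal.
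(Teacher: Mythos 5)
Your proposal is correct in substance, but it takes a genuinely different route from the paper. The paper does not verify any compatibility identity for general $K$: it observes that, since $K\subset A$ and the backward part under $Q_A^{\emptyset,K'}$ is conditioned on $\{\widetilde H_A>T_{K'}\}$, the event $W_{K,K'}$ of \eqref{E:5.4.0} only constrains the forward part of each trajectory, whence $1_{W_{K,K'}}Q_A^{\emptyset,K'}=Q_A^{K,K'}$ (the computation \eqref{E:5.4}); it then invokes Theorem 2.1 of \cite{Tei09} (the walk killed outside $K'$ being a transient weighted graph) to obtain the unconditioned measure $\nu^{\emptyset,K'}$ of \eqref{E:5.6}, and simply sets $\nu^{K,K'}=1_{W^*_{K,K'}}\,\nu^{\emptyset,K'}$. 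Your route --- proving the consistency relation (the analogue of \eqref{E:5.7}) directly for general $K$ via the sweeping identity \eqref{E:2.15}, the strong Markov property at $H_{A'}$, and time reversal of the stretch between the two entrance points, then patching via the maximal set, i.e.\ $\nu^{K,K'}:=\pi^*\circ Q_{K'}^{K,K'}$ --- is precisely the alternative the paper sketches in the Remark following Theorem \ref{T:2}, and you correctly isolate the structural point that makes it go through: the reversed stretch automatically avoids $K\subset A'$, so the $K$-constraint is carried entirely by the new forward part. What the paper's argument buys is brevity (all reversal bookkeeping is outsourced to \cite{Tei09}, \cite{Sz10}); what yours buys is a self-contained proof and an existence step that exploits the finiteness of $K'$ rather than a projective limit.

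Two small corrections. First, your displayed entrance-measure identity double-counts the conditioning: since $\rho_A^{K,K'}(x)=e_{A,K'}(x)\,P_x[H_K>T_{K'}]$ by \eqref{E:5.2}, the correct identity is $\sum_x e_{A,K'}(x)\,P_x[H_{A'}<T_{K'},\,H_K>T_{K'},\,X_{H_{A'}}=z]=\rho_{A'}^{K,K'}(z)$, or equivalently keep $\rho_A^{K,K'}(x)$ but use the probability conditioned on $\{H_K>T_{K'}\}$, as dictated by \eqref{E:5.1}; as written, the factor $P_x[H_K>T_{K'}]$ appears twice. Second, for uniqueness it is not true that every trajectory in $W^*$ enters $K'$ ($W$ also contains trajectories avoiding $K'$), so the sets $W_A^*$, $K\subset A\subset K'$, only cover $W^*_{K'}$; uniqueness should be read, as implicitly in the paper, for measures carried by $W^*_{K'}$, where it follows at once from \eqref{E:5.3} with $A=K'$ --- which is in fact your own existence formula.
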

\begin{proof}
Let $K, K'$ and $A$ be as above, and define $W_{K,K'}\subset W$ as
\begin{equation}
\label{E:5.4.0}
W_{K,K'} = \{ w \in W ; \, H_{K}(w) > T_{K'}(w)\}
\end{equation}
(for bidirectional $w$, $H_{K}(w) = \inf\{t \in \mathbb{R}: X_t(w) \in K \}$ and $T_{K}=H_{\mathbb{Z}^2\setminus K}$, with the convention $\inf \emptyset = -\infty$)
which has full measure under $Q_A^{K,K'}$ (note in particular that $W_{K,K'} \subset W_{K'}$), and similarly let $W_{K,K'}^+\subset W^+$ by replacing $W$ with $W^+$ in \eqref{E:5.4.0}.
Consider the measure $1_{W_{K,K'}} Q_A^{\emptyset,K'}$. Since $K \subset A$, any (bidirectional) trajectory $w \in \text{supp}(Q_A^{\emptyset,K'})$ first enters in $K$ at some non-negative time, and possibly never, i.e. $H_K(w) \in[ 0,\infty]$. Hence, in view of \eqref{E:5.1}, the restriction of $Q_A^{\emptyset,K'}$ to $W_{K,K'}$ only affects the forward part of any trajectory in the support, yielding, for all $x \in K'$, $ E^-,  E^+ \in \mathcal{W}^+$,
 \begin{equation}
\label{E:5.4}
\begin{split}
&(1_{W_{K,K'}} Q_A^{\emptyset,K'})[(X_{-t})_{t\geq 0} \in E^-, \, X_0 = x, \,(X_{t})_{t\geq 0} \in E^+]\\
&\quad  \stackrel{\eqref{E:5.1}}{=} P_x[(X_{-t})_{0\leq t\leq T_{K'}} \in E^- \,| \, \widetilde{H}_A > T_{K'}] \, e_{A,K'}(x) \, \big(1_{W_{K,K'}^+}P_x\big)[(X_{t})_{0\leq t\leq T_{K'}} \in E^+ ]\\
&\quad \stackrel{\eqref{E:5.4.0}}{=} P_x[(X_{-t})_{0\leq t\leq T_{K'}} \in E^- \,, \, \widetilde{H}_A > T_{K'}]
\, P_x[(X_{t})_{0\leq t\leq T_{K'}} \in E^+ , \, H_K > T_{K'}]\\
&\quad \stackrel{\eqref{E:5.1}}{=} Q_A^{K,K'}[(X_{-t})_{t\geq 0} \in E^-, \, X_0 = x, \,(X_{t})_{t\geq 0} \in E^+].
\end{split}
\end{equation}
Taking projections, it follows from \eqref{E:5.4} that
\begin{equation}
\label{E:5.5}
1_{{W}_{K,K'}^*} (\pi^* \circ Q_A^{\emptyset,K'})= \pi^* \circ Q_A^{K,K'}, \quad \text{for all $A \subset K'$}.
\end{equation}
By Theorem 2.1 in \cite{Tei09}, we know there exists a unique measure $\nu^{\emptyset,K'}$ on $W^*$ such that 
\begin{equation}
\label{E:5.6}
1_{W_A^*}\cdot \nu^{\emptyset,K'} =  \pi^* \circ Q_A^{\emptyset,K'}, \quad \text{for all $A\subset K'$}
\end{equation} 
(note that due to the restriction of $X_{\cdot}$ to times $t\leq T_{K'}$ in \eqref{E:5.1}, we are effectively in the setup of a \textit{transient} weighted graph, see \cite{Tei09}, and also \cite{Sz12b} p.8, Example 2, by thinking of any trajectory in the support of $Q_A^{\emptyset,K'}$ as being killed when exiting $K'$, i.e. entering the absorbing state $x_*$, see \eqref{E:5.0}). The claim \eqref{E:5.3} follows from
\eqref{E:5.5} and \eqref{E:5.6} upon letting $\nu^{K,K'}= 1_{{W}_{K,K'}^*}  \nu^{\emptyset,K'} $.
\end{proof}

\begin{rmk}
Instead of invoking Theorem 2.1 of \cite{Tei09} (see also \cite{Sz10}, Theorem 1.1) for the case $K=\emptyset$ (no conditioning) and avoid the set $K$ by suitably restricting the measure, one can also work immediately with general $K$ and devise a more direct argument that follows the lines of their proofs. We briefly sketch the argument, which is instructive. The claim \eqref{E:5.3} follows readily if the following compatibility condition is satisfied: for all $A,A'$ with $K\subset A\subset A' \subset K'$, and all measurable $E \subset W_A^0$, the set of trajectories entering $A$ at time $0$,
\begin{equation}
\label{E:5.7}
Q^{K,K'}_A[E]= Q^{K,K'}_{A'}[\{ w \in W_A \cap W_{A'}^0 : \, \theta_{H_A}(w) \in E \}].
\end{equation}
For $K = \emptyset$, see for instance the proof of Theorem 1.1 in \cite{Sz10}, one essentially shows \eqref{E:5.7} by considering a trajectory $w\in W_A^0 \cap E$ and time-reversing its backward part starting at $A$ until its last visit to $A' \setminus A$. One then simply observes that this time-reversal can still be carried out without obstruction under the constraint that $K$ be avoided, since this only affects the \textit{forward} part of the trajectory, as seen from $A$. \hfill $\square$
\end{rmk}
One can naturally associate to the measure $ \nu^{K,K'}$ in \eqref{E:5.3} a family of Poisson point process indexed by an intensity parameter $u \geq 0$ as follows. Let
\begin{equation}
\label{E:5.10}
\begin{split}
\Omega= \Big\{ 
&\omega = \sum_{i\in I} \delta_{(w_i^*, u_i)}, \, w_i^* \in W^* \text{ for all } i \in I \text{ and $I \subseteq \mathbb{N}$, $u_i \geq 0$,} \\
&\text{ and } \omega(W_A^* \times [0,u])< \infty \text{ for all } A \subset \subset \mathbb{Z}^2 \text{ and } u\geq0 \Big\}
\end{split}
\end{equation}
be the space of locally finite point measures on $W^*\times [0,\infty)$, endowed with the $\sigma$-algebra~$\mathcal{F}$ generated by the evaluation maps $\omega \mapsto \omega(D^*)= \sum_{i \in I} 1\{ w_i^* \in D^* \}$ if $\omega = \sum_{i\in I} \delta_{w_i^*}$, for all $D^* \in \mathcal{W}^* \times \mathcal{B}$, where  $\mathcal{B}$ stands for the Borel $\sigma$-algebra on $[0,\infty)$. Since the infinite measure  $\nu^{K,K'}(\text dw^*)\text d u$ on $W^* \times [0,\infty)$ assigns finite mass to the sets $W^* \times [0,u]$, for any $u \geq 0$, we then classically introduce (see for instance \cite{Re87}) the probability measure $\mathbb{P}^{K,K'}$ on $(\Omega,\mathcal{F})$ 
 such that
 \begin{equation}
 \label{E:5.11}
 \begin{split}
&\text{under $\mathbb{P}^{K,K'}$, the law of $\omega$ is that of a Poisson point }\\
&\text{measure on $W^* \times [0,\infty)$ with intensity $\nu^{K,K'}(\text dw^*)\text d u$}.
\end{split}
\end{equation}
The process induces a field of occupation times on $\mathbb{Z}^2$, coming from collecting the cumulated time the various trajectories in the cloud spend at a particular vertex $x$: one defines $(L_{x,u})_{x\in \mathbb{Z}^2}$, for all $u > 0$, as
\begin{equation}
\label{E:5.12}
L_{x,u}(\omega)=\sum_{i \in I}\int_{-\infty}^\infty \text d t \, 1\{ w_i(t) = x, \, u_i \leq u \}, \text{ for } \omega =  \sum_{i\in I} \delta_{(w_i^*, u_i)},
\end{equation}
where $w_i \in W$ is any representant in the equivalence class of $w_i^*$, i.e. any trajectory satisfying $\pi^*(w_i) = w_i^*$. Note that $L_{x,u} = 0$ for all $x \notin K' \setminus K$, cf.  \eqref{E:5.1}. The corresponding interlacement set at level $u \geq 0$ (killed outside $K'$ and avoiding $K$) is then defined as
\begin{equation}
\label{E:5.13}
\mathcal{I}^u =\{x \in \mathbb{Z}^2:\, L_{x,u} > 0  \}, 
\end{equation} 
which is increasing in $u$ (since $L_{x,u}$ is) and 
\begin{equation}
\label{E:5.14}
\mathcal{V}^u = \mathbb{Z}^2 \setminus \mathcal{I}^u,  \text{ which satisfies } K\cup (K')^c \subset \mathcal{V}^u,
\end{equation}
is the corresponding vacant set. The law of $(1\{x \in \mathcal{I}^u\})_{x\in \mathbb{Z}^2}$ on $( \widetilde{\Omega} = \{0,1 \}^{\mathbb{Z}^2}, \widetilde{\mathcal{F}})$ (with canonical coordinates $\widetilde{Y}_{\cdot}$) under $\mathbb{P}^{K,K'}$ is denoted by $Q^{K,K'}_u$. It is characterized by the following property.
\begin{proposition}
 For all $A$ satisfying $K \subset A \subset K'$, one has
\begin{equation}
\label{E:5.15}
Q^{K,K'}_u [\widetilde{Y}_x= 0, \, x \in A] = \mathbb{P}^{K,K'}[\mathcal{I}^u \cap A = \emptyset]= \exp\Big\{-u \sum_{x \in A} \rho_{A}^{K,K'}(x)\Big\}, 
\end{equation}
with $ \rho_{A}^{K,K'}$ given by \eqref{E:5.2}.
\end{proposition}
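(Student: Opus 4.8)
The plan is to read off both equalities in \eqref{E:5.15} directly from the Poisson structure of $\omega$ under $\mathbb{P}^{K,K'}$ together with the compatibility relation $1_{W_A^*}\,\nu^{K,K'} = \pi^*\circ Q_A^{K,K'}$ supplied by Theorem \ref{T:2}. The first equality is essentially a matter of unwinding definitions: since $Q^{K,K'}_u$ was defined as the law of the occupation field $(1\{x \in \mathcal{I}^u\})_{x\in\mathbb{Z}^2}$ under $\mathbb{P}^{K,K'}$, the event $\{\widetilde{Y}_x = 0 \text{ for all }x \in A\}$ pulls back to $\{\mathcal{I}^u \cap A = \emptyset\}$, whence $Q^{K,K'}_u[\widetilde{Y}_x = 0,\ x \in A] = \mathbb{P}^{K,K'}[\mathcal{I}^u \cap A = \emptyset]$.

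For the second equality, I would first record the pathwise identity $\{\mathcal{I}^u \cap A = \emptyset\} = \{\omega(W_A^* \times [0,u]) = 0\}$. This follows from \eqref{E:5.12}--\eqref{E:5.13}: a vertex $x \in A$ lies in $\mathcal{I}^u$ exactly when some representative $w_i$ of a point $(w_i^*,u_i)$ of $\omega$ with $u_i \le u$ spends positive time at $x$, and a nearest-neighbour (continuous-time) trajectory spends positive holding time at $x$ iff it visits $x$; hence $\mathcal{I}^u \cap A \neq \emptyset$ iff $w_i \in W_A$, i.e. $w_i^* \in W_A^*$, for some $i$ with $u_i \le u$. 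Since $\nu^{K,K'}$ is a finite measure (Theorem \ref{T:2}) and $\omega$ is, under $\mathbb{P}^{K,K'}$, a Poisson point process with intensity $\nu^{K,K'}(\mathrm dw^*)\,\mathrm du$, the count $\omega(W_A^* \times [0,u])$ is Poisson distributed with parameter $u\,\nu^{K,K'}(W_A^*)$, so that $\mathbb{P}^{K,K'}[\mathcal{I}^u \cap A = \emptyset] = e^{-u\,\nu^{K,K'}(W_A^*)}$.

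It then remains to identify $\nu^{K,K'}(W_A^*)$ with $\sum_{x\in A}\rho_A^{K,K'}(x)$. Applying \eqref{E:5.3} with this particular $A$ (permissible since $K \subset A \subset K'$), one gets $\nu^{K,K'}(W_A^*) = (\pi^*\circ Q_A^{K,K'})(W^*) = Q_A^{K,K'}(W)$, and the latter equals $Q_A^{K,K'}(W_A)$ because $Q_A^{K,K'}$ is supported on $W_A$. Taking $E^- = E^+ = W^+$ in \eqref{E:5.1} and summing over $x \in A$, each of the two conditional probabilities appearing there equals $1$ whenever it is well-defined, and the vertices $x$ for which it is not are precisely those with $\rho_A^{K,K'}(x) = 0$, which contribute nothing; hence $Q_A^{K,K'}(W_A) = \sum_{x\in A}\rho_A^{K,K'}(x)$. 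Chaining the three computations yields \eqref{E:5.15}.

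I do not anticipate a genuine obstacle here: the argument is the standard Poissonian bookkeeping underlying all interlacement-type constructions. The only points requiring mild care are the measurability of $W_A^*$ (already built into the definition \eqref{E:5.10}), the pathwise equivalence $\{\mathcal{I}^u \cap A = \emptyset\} = \{\omega(W_A^* \times [0,u]) = 0\}$, and the harmless degenerate cases in \eqref{E:5.1} where a conditional law is vacuous because the corresponding equilibrium weight or escape probability vanishes.
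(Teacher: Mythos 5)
Your proposal is correct and follows essentially the same route as the paper: the identity $\{\mathcal{I}^u \cap A = \emptyset\} = \{\omega(W_A^*\times[0,u])=0\}$, the Poisson count with parameter $u\,\nu^{K,K'}(W_A^*)$, the identification $\nu^{K,K'}(W_A^*)=Q_A^{K,K'}(W_A)$ via \eqref{E:5.3}, and finally the total mass computation from \eqref{E:5.1}. The only difference is that you spell out the degenerate cases and measurability points which the paper leaves implicit; no gap.
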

\begin{proof} On account of \eqref{E:5.13} and \eqref{E:5.11}, one has, minding that $\omega(W_A^* \times [0,u])$ is the number of trajectories with label at most $u$ visiting the set $A$,
\begin{equation*}
\mathbb{P}^{K,K'}[\mathcal{I}^u \cap A = \emptyset] = \mathbb{P}^{K,K'}[\omega(W_A^* \times [0,u]) = 0] = e^{-u \nu^{K,K'}(W_A^*)} =  e^{-u Q_A^{K,K'}(W_A)},
\end{equation*}
using \eqref{E:5.3} for the last equality. The claim now follows from \eqref{E:5.1}.
\end{proof}
For a fixed set $A$ satisfying $K \subset A \subset K'$, it will be convenient to also introduce the point measure $\mu_{A,u}$, which comprises the (forward part of the) trajectories with label at most $u$ that visit the set $A$, from the time they first enter in $A$, i.e.
\begin{equation}
\label{E:5.30}
\mu_{A}^u (\omega) = \sum_{ i } \delta_{w_i^+} 1\{ u_i \leq u, \, H_A(w_i^*) < \infty\}, \text{ if } \omega=\sum_i \delta_{(w_i^*, u_i)},
\end{equation}
where $w_i^+ \in W^+$ is obtained from $w_i^* \in W_A^*$ by considering the unique element $w_i \in W$ such that $\pi^* (w_i) = w_i^*$ and $H_A(w_i)=0$, and restricting $w_i$ to its forward part. In view of \eqref{E:5.11}, and using \eqref{E:5.3}, \eqref{E:5.1} it readily follows that 
\begin{equation}
\label{E:5.31}
\begin{split}
&\text{$\mu_{A}^u$ is a Poisson random measure under $\mathbb{P}^{K,K'}$ }\\
&\text{with intensity $u P_{\rho_{A}^{K,K'}}[(X_{t})_{0\leq t\leq T_{K'}} \in \cdot \, | \,H_K > T_{K'}]$}.
\end{split}
\end{equation}

\section{Infinite-volume limit}\label{S:4}
We now describe the local limit $Q^{\alpha}$ of the trace of the random walk on the torus, see \eqref{E:1.5} and \eqref{E:1.3}, using a growing family of interlacements as defined in the previous section. We henceforth set
\begin{equation}
\label{E:6.1}
K=\{ 0 \}, \qquad K' = B_N, \text{ for some $N \geq 1$}
\end{equation}
and write  $\mathbb{P}^{0,N}$ (and similarly $\rho^{0,N}$, $Q^{0,N}$) for the measure $\mathbb{P}^{K,K'}$ defined by \eqref{E:5.11} with the choices \eqref{E:6.1}. We first investigate in Proposition  \ref{T:6.1} the behavior of the random set $\mathcal{V}^{u}$, see \eqref{E:5.14}, under $\mathbb{P}^{0,N}$ and with suitably tuned $u=u_N$ in the limit as $N \to \infty$. We then prove a (much) stronger convergence result at the level of the process $\omega$ itself, see Theorem \ref{T:LIMIT} and Corollary \ref{C:LIMIT} below. This will bring into play the \textit{tilted} interlacements of \cite{CPV15}.

\begin{proposition} $(\alpha > 0)$
\label{T:6.1}

\medskip
\noindent Let $u_N(\alpha)$, ${N \geq 1}$, be any squence of positive numbers satisfying
\begin{equation}
\label{E:6.2}
 u_N(\alpha) \sim  \frac{2}{\pi} \alpha \log^2N , \text{ as } N \to \infty.
\end{equation}
Then, the law of $(1\{ x\in \mathcal{V}^{u_N(\alpha)}\})_{x\in \mathbb{Z}^2}$ $($see \eqref{E:5.14}$)$ under $\mathbb{P}^{0,N}$ converges weakly as $N \to \infty $ towards $Q^{\alpha}$. 
\end{proposition}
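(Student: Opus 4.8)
The plan is to show that the finite-dimensional marginals of the occupation field of the vacant set converge, which by the explicit form \eqref{E:1.5} of $Q^{\alpha}$ reduces to a single computation. Fix a finite set $A \subset \mathbb{Z}^2$. Since $Q^{\alpha}$ is characterized by \eqref{E:1.5} only for sets containing the origin, and since $0 \in \mathcal{V}^{u}$ always (cf. \eqref{E:5.14} with $K = \{0\}$), it suffices to treat sets $A$ with $0 \in A$: indeed $Q^{\alpha}(\widetilde Y_x = 0,\, x \in A) = Q^{\alpha}(\widetilde Y_x = 0,\, x \in A \cup \{0\})$ and likewise $\mathbb{P}^{0,N}[\mathcal{V}^{u_N(\alpha)} \supseteq A] = \mathbb{P}^{0,N}[\mathcal{V}^{u_N(\alpha)} \supseteq A \cup \{0\}]$, because the event $\{0 \in \mathcal{V}^u\}$ has full probability. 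Hence assume $0 \in A$, enlarge $A$ if necessary so that $A = \overline{A}$ is connected (this only makes the event smaller and does not affect the limit identity once we have it for a cofinal family), and take $N$ large enough that $A \subset B_N$. For such $A$ one has, by \eqref{E:5.15} with $K = \{0\}$, $K' = B_N$,
\begin{equation}
\label{E:plan1}
\mathbb{P}^{0,N}\big[\mathcal{V}^{u_N(\alpha)} \supseteq A\big] = \mathbb{P}^{0,N}[\mathcal{I}^{u_N(\alpha)} \cap A = \emptyset] = \exp\Big\{- u_N(\alpha) \sum_{x \in A} \rho_A^{0,N}(x)\Big\},
\end{equation}
where, writing out \eqref{E:5.2} and recalling $\rho_A^{0,N}(x) = e_{A,B_N}(x)\, P_x[H_0 > T_{B_N}]\, 1_A(x)$, the sum is exactly the quantity appearing in Lemma \ref{L:4.cap} with $y = 0$.

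The core step is therefore purely analytic: combine \eqref{E:plan1} with the capacity representation \eqref{E:4.1} (Lemma \ref{L:4.cap}) and the hypothesis \eqref{E:6.2} on $u_N(\alpha)$. By Lemma \ref{L:4.cap} applied with $y = 0 \in A$,
\begin{equation}
\label{E:plan2}
\lim_{N \to \infty} (\log N)^2 \sum_{x \in A} e_{A,B_N}(x)\, P_x[H_0 > T_{B_N}] = \frac{\pi^2}{4}\, \textnormal{cap}(A).
\end{equation}
Since $u_N(\alpha) \sim \tfrac{2}{\pi}\alpha \log^2 N$, the product $u_N(\alpha) \sum_{x \in A} \rho_A^{0,N}(x)$ converges to $\tfrac{2}{\pi}\alpha \cdot \tfrac{\pi^2}{4}\,\textnormal{cap}(A) = \tfrac{\pi}{2}\alpha\,\textnormal{cap}(A)$, so the right-hand side of \eqref{E:plan1} tends to $e^{-\frac{\pi}{2}\alpha\,\textnormal{cap}(A)} = Q^{\alpha}(\widetilde Y_x = 0,\, x \in A)$ by \eqref{E:1.5}. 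Running this for all finite $A \ni 0$ (and passing to general finite $A$ via $A \mapsto A \cup \{0\}$ as noted) shows that the "all zeros on $A$" marginals converge.

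Finally I would upgrade the convergence of these particular marginals to weak convergence of the law of $(1\{x \in \mathcal{V}^{u_N(\alpha)}\})_{x \in \mathbb{Z}^2}$ on $\widetilde\Omega = \{0,1\}^{\mathbb{Z}^2}$. Weak convergence on the compact product space $\{0,1\}^{\mathbb{Z}^2}$ is equivalent to convergence of the finite-dimensional marginals, and on $\{0,1\}^{F}$ for finite $F$ the functionals $\eta \mapsto \prod_{x \in S}(1 - \eta_x) = 1\{\eta_x = 0,\ x \in S\}$, $S \subseteq F$, span all functions; equivalently, by inclusion–exclusion the probabilities $Q^{K,K'}_u[\widetilde Y_x = 0,\, x \in S]$ determine the full marginal on $F$, and the same inclusion–exclusion identity, being a finite linear relation, is stable under the limit. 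Concretely: for finite $F$ and $\eta_0 \in \{0,1\}^F$,
\begin{equation*}
\mathbb{P}^{0,N}\big[(1\{x \in \mathcal{V}^{u_N(\alpha)}\})_{x\in F} = \eta_0\big] = \sum_{S \supseteq \{x \in F:\, \eta_0(x)=0\}} (-1)^{|S| - |\{x\in F:\, \eta_0(x)=0\}|}\, \mathbb{P}^{0,N}\big[\mathcal{V}^{u_N(\alpha)} \supseteq S\big],
\end{equation*}
and each term on the right converges by the previous paragraph (for $S$ not containing $0$, replace $S$ by $S \cup \{0\}$ inside the probability, which does not change it). I do not expect a serious obstacle here; the one point requiring a line of care is the reduction to sets containing the origin and the fact that $\textnormal{cap}(S) = \textnormal{cap}(S \cup \{0\})$ need not hold — but this is not needed, since what is invoked is $\mathbb{P}^{0,N}[\mathcal{V}^u \supseteq S] = \mathbb{P}^{0,N}[\mathcal{V}^u \supseteq S \cup \{0\}]$ at the level of events (both sides use $0 \in \mathcal{V}^u$ a.s.) together with $Q^{\alpha}(\widetilde Y_x=0, x\in S) = Q^\alpha(\widetilde Y_x = 0, x \in S\cup\{0\})$ which holds because $Q^\alpha$, being the weak limit, also charges $\{\widetilde Y_0 = 0\}$ with full measure (or directly: the identity \eqref{E:1.5} is only asserted for sets containing $0$, and that is all the inclusion–exclusion above requires). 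The genuinely substantive input is entirely Lemma \ref{L:4.cap}, which has already been proved; the rest is bookkeeping.
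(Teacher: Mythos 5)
Your argument is correct and is essentially the paper's proof: you express $\mathbb{P}^{0,N}[\mathcal{V}^{u_N(\alpha)}\supseteq A]$ through \eqref{E:5.15} and then combine Lemma \ref{L:4.cap} (applied with $y=0$) with \eqref{E:6.2} to obtain the limit $e^{-\frac{\pi}{2}\alpha\,\textnormal{cap}(A)}$, which is exactly what the paper does, the paper simply leaving the inclusion--exclusion upgrade to full weak convergence implicit. The only quibble is the unnecessary aside about enlarging $A$ to be connected (Lemma \ref{L:4.cap} requires no such hypothesis, and a ``cofinal family'' of void probabilities would not by itself determine all marginals), but since you never actually rely on it the proof stands.
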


\begin{proof}
Let $A \subset \mathbb{Z}^2$ be a finite set containing the origin, and suppose henceforth that $N$ is large enough so that $A \subset B_N$. By \eqref{E:5.15}, we then know that 
\begin{equation}
\label{E:6.3}
\mathbb{P}^{0,N}[\mathcal{I}^{u_N(\alpha)}\cap A = \emptyset] = \exp \Big[ -u_N(\alpha) \sum_{x \in A} \rho_{A}^{0,N}(x) \Big].
\end{equation}
On the other hand, in view of \eqref{E:4.1}, we obtain that
\begin{equation*}
\begin{split}
&\lim_N \Big[ u_N(\alpha) \sum_{x \in A} \rho_{A}^{0,N}(x) \Big] \\
&\qquad \stackrel{\eqref{E:5.2} \eqref{E:6.2}}{=}  \frac{2}{\pi} \alpha  \lim_N  \Big[ \log^2 N \sum_{x \in A} e_{A,B_N}(x) \,  P_x[H_0 > T_{B_N}] \Big] = \frac{\pi}{2}\alpha \text{cap}(A). 
\end{split}
\end{equation*}
Substituting this into \eqref{E:6.3} completes the proof.
\end{proof}


Let
$ \nu^{0,\mathbb{Z}^2} $ be formally defined as in \eqref{E:5.3} with
\begin{equation}
\label{E:6.10}
Q_A^{0,\mathbb{Z}^2}[\, \cdot \, ] = \sum_{x\in A} a(x) \text{hm}_A(x) \widehat{P}_x [\, \cdot \, ],
\end{equation}
for $A \subset \subset \mathbb{Z}^d$ (one may in fact assume that $0 \in A$ since $Q_A^{0,\mathbb{Z}^2} = Q_{A\cup \{ 0\}}^{0,\mathbb{Z}^2}$), where $\widehat{P}_x$, $x \in \mathbb{Z}^2\setminus \{ 0\}$ is the law of the continuous time (with exponential holding times of parameter $1$) nearest-neighbor random walk on $ \mathbb{Z}^2$, endowed with edge weights 
\begin{equation}
\label{E:6.11}
\hat{c}_{x,y} = a(x)a(y)1\{ |x - y|=1\}, \quad x,y\in \mathbb{Z}^2
\end{equation} 
(note that $\hat{c}_{x,0} =0$). 
Its Markovian jump probabilities are given by 
\begin{equation}
\label{E:6.12}
\hat{p}_{x,y}= \frac{\hat{c}_{x,y}}{\hat{\lambda}_x}, \quad \text{with } \hat{\lambda}_x = \sum_{y: y\sim x} \hat{c}_{x,y} \stackrel{\eqref{E:2.1.1}}{=} 4a^2(x).
\end{equation}
The walk under $\widehat{P}_x$, $x \in \mathbb{Z}^2\setminus \{ 0\}$, is transient, hence the construction of the intensity measure $ \nu^{0,\mathbb{Z}^2} $ from \eqref{E:6.10} is within the realm of \cite{Tei09}, which treats random walks on general transient weighted graphs. 
We endow the space of Radon measures on $W^*$ with the topology of vague convergence. Thus, a sequence $\mu_N$ converges vaguely to $\mu$, denoted $\mu_N \stackrel{v}{\rightarrow} \mu$, as $N \to \infty$, if $\mu_N(f)\to \mu(f)$ for all continuous functions $f$ on $W^*$ with compact support.
\begin{thm} $(\alpha > 0)$
\label{T:LIMIT}

\medskip
\noindent For any sequence $u_N(\alpha)$, $N \geq 1$, satisfying \eqref{E:6.2},
\begin{equation}
\label{E:6.13}
\frac{2}{\pi} u_N(\alpha) \,  \nu^{0,N}   \stackrel{v}{\longrightarrow}  \alpha \, \nu^{0,\mathbb{Z}^2}, \quad \text{as } N \to \infty.
\end{equation}
\end{thm}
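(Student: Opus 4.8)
The strategy is to establish vague convergence by testing against trajectories visiting a fixed finite set $A\ni 0$, i.e. to show that for each such $A$ the restricted measures $\frac2\pi u_N(\alpha)\,(1_{W_A^*}\nu^{0,N})=\frac2\pi u_N(\alpha)\,(\pi^*\circ Q_A^{0,N})$ converge weakly to $\alpha\,(1_{W_A^*}\nu^{0,\mathbb{Z}^2})=\alpha\,(\pi^*\circ Q_A^{0,\mathbb{Z}^2})$; since any compactly supported continuous $f$ on $W^*$ is supported on some $W_A^*$, and by the consistency relations \eqref{E:5.3} and the definition of $\nu^{0,\mathbb{Z}^2}$ via \eqref{E:6.10}, this yields \eqref{E:6.13}. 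Because these are finite measures on $W_A^*$, it suffices to check (i) convergence of total masses, and (ii) convergence of the trajectories' laws, i.e. of the normalized probability measures obtained from $Q_A^{0,N}$ and $Q_A^{0,\mathbb{Z}^2}$.

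\textbf{Step 1: total masses.} From \eqref{E:5.1}--\eqref{E:5.2}, $Q_A^{0,N}(W_A)=\sum_{x\in A}\rho_A^{0,N}(x)=\sum_{x\in A}e_{A,B_N}(x)P_x[H_0>T_{B_N}]$, so by the capacity formula \eqref{E:4.1}, $\frac2\pi u_N(\alpha)\,Q_A^{0,N}(W_A)\to \alpha\cdot\frac{\pi}{2}\mathrm{cap}(A)$ under \eqref{E:6.2} — exactly as in the proof of Proposition \ref{T:6.1}. On the other side, from \eqref{E:6.10}, $Q_A^{0,\mathbb{Z}^2}(W_A)=\sum_{x\in A}a(x)\mathrm{hm}_A(x)=\mathrm{cap}(A)$ by \eqref{E:2.12} with $y=0$. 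Hence the total masses match (up to the harmless factor $\frac{\pi}{2}$, which is absorbed consistently — or one rescales; the point is the limiting mass is $\propto\mathrm{cap}(A)$ on both sides with the stated constant).

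\textbf{Step 2: convergence of the trajectory laws.} Decompose a trajectory $w\in W_A$ (entering $A$ at time $0$) into entrance point $X_0$, backward part, and forward part, as in \eqref{E:5.1}. I would show the three ingredients converge separately. \emph{Entrance point:} the law of $X_0$ under the normalized $Q_A^{0,N}$ is $\tilde\rho_A^{0,N}(x)\propto e_{A,B_N}(x)P_x[H_0>T_{B_N}]$; multiplying numerator and denominator by $\frac2\pi\log N$ and using \eqref{E:2.11} for $e_{A,B_N}\to\mathrm{hm}_A$ and \eqref{E:4.7} for $\frac2\pi(\log N)P_x[H_0>T_{B_N}]\to a(x)$, this converges to $a(x)\mathrm{hm}_A(x)/\mathrm{cap}(A)$, the entrance law under the normalized $Q_A^{0,\mathbb{Z}^2}$. \emph{Forward part:} under $Q_A^{0,N}$ it is SRW from $X_0$ conditioned on $\{H_0>T_{B_N}\}$, killed at $T_{B_N}$; I claim it converges (on finite time horizons / on cylinder events depending on finitely many steps) to the $h$-transform of SRW by the harmonic function $a(\cdot)$, i.e. the walk $\widehat P_{X_0}$ with weights \eqref{E:6.11}, by the standard fact that conditioning on avoiding $0$ and exiting $B_N$ is, as $N\to\infty$, the Doob transform by $x\mapsto P_x[H_0>T_{B_N}]\sim\frac{\pi}{2\log N}a(x)$, whose limit transition kernel is precisely $\hat p_{x,y}=a(y)1\{x\sim y\}/(4a(x))$. \emph{Backward part:} under $Q_A^{0,N}$ it is SRW from $X_0$ conditioned to exit $B_N$ before returning to $A$, killed at exit; time-reversing (as in the Remark following Theorem \ref{T:2}, and using reversibility), this matches the relevant piece of the reversed $\widehat P$-walk from \cite{Tei09}'s construction of $\nu^{0,\mathbb{Z}^2}$. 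Assembling the three limits and invoking Step 1 gives weak convergence of $\frac2\pi u_N(\alpha)(\pi^*\circ Q_A^{0,N})$ to $\alpha(\pi^*\circ Q_A^{0,\mathbb{Z}^2})$ on $W_A^*$; a standard argument (supports, consistency \eqref{E:5.3}) upgrades this to vague convergence on $W^*$.

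\textbf{Main obstacle.} The delicate point is Step 2's forward/backward parts: one must show that the conditioned-and-killed walks at scale $N$ converge, in the appropriate sense on path space, to the transient $h$-transformed walk $\widehat P$, uniformly enough in the starting point $x\in A$ (a finite set, so uniformity is mild) and on all cylinder events. This rests on the precise asymptotics $P_x[H_0>T_{B_N}]\sim\frac{\pi}{2}a(x)/\log N$ with controlled error — available from \eqref{E:4.7} and the martingale argument in Remark \ref{R:cap},1), together with \eqref{E:2.3.0} — and on checking that the one-step conditional transition probabilities of the conditioned walk, $P_x[X_{\text{first step}}=y\mid H_0>T_{B_N}]$, converge to $\hat p_{x,y}$ and that no mass escapes to the killing/cemetery state on bounded time scales. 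Matching the constant $\frac2\pi$ (equivalently, reconciling the $(\frac2\pi\log N)^2$ in \eqref{E:4.1} with the $(\frac2\pi\log N)$ normalizing each of the two probabilities, per Remark \ref{R:cap},1)) is where bookkeeping must be done carefully, but it is not conceptually hard. Tightness on $W^*$ is automatic since each $W_A^*$ carries finite mass and the trajectories make finitely many jumps; the content is entirely in the identification of the limit.
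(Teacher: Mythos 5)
Your proposal is correct and takes essentially the same route as the paper: restrict to $W_A^*$ via \eqref{E:5.3}, use $\tfrac2\pi\log N\,e_{A,B_N}\to\mathrm{hm}_A$ \eqref{E:2.11} together with $\tfrac2\pi\log N\,P_x[H_0>T_{B_N}]\to a(x)$ \eqref{E:4.7}, \eqref{E:6.16.0}, and identify the limiting forward law as the $a(\cdot)$-tilted walk $\widehat P$ with kernel \eqref{E:6.12}. The step you single out as the main obstacle is precisely what the paper settles in one line by the simple Markov property at the $n$-th jump, writing the cylinder probability as $(1/4)^n P_{x_n}[H_0>T_{B_N}]$ and matching it with $a(x_0)\widehat P_{x_0}[Z_i=x_i,\,i\le n]=(1/4)^n a(x_n)$; moreover the constants match exactly, since $\tfrac2\pi u_N(\alpha)\,Q_A^{0,N}(W_A)\to\alpha\,\mathrm{cap}(A)=\alpha\,Q_A^{0,\mathbb{Z}^2}(W_A)$ by \eqref{E:4.1} and \eqref{E:2.12}, so no factor $\tfrac\pi2$ needs to be absorbed.
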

\begin{proof}
We start with a few reduction steps. By construction of $\nu^{0,N}$ and $\nu^{0,\mathbb{Z}^2}$, cf. \eqref{E:5.3}, and since $u_N(\alpha)$ satisfies \eqref{E:6.2}, it suffices to show that for all finite $A\subset \mathbb{Z}^2$ containing the origin, 
\begin{equation}
\label{E:6.13}
\Big(\frac{2}{\pi} \log N \Big)^2 Q_A^{0,N}(E) \stackrel{v}{\rightarrow}  Q_A^{0,\mathbb{Z}^2}(E), \quad \text{ as $N \to \infty$}, 
\end{equation}
where $E$ is an arbitrary event of the following form: writing, for $w \in W^+ $, $T_i = T_i(w)$, $i \geq 1$ for its successive jump times, and $N_t = N_t(w)$ for the number of jumps up to time $t$, one has, for some $n \geq 0$, $(x_0, x_1,\dots,x_n)$ a nearest neighbor path on $\mathbb{Z}^2 \setminus\{ 0\}$ with $x_0 \in A$, $t > 0$ and $0< t_1< \dots < t_n < t$,
$$
E=\{  N_t =n , \, X_0 = 0, \, X_{T_i}= x_i, \, T_i \in [t_i, t_i + \text{d} t_i), \, 1\leq i \leq n \},
$$
where the precise meaning of $T_i \in [t_i, t_i + \text{d} t_i)$ is to consider some measurable subset $A_i \subset (0,\infty)$ and integrate $t_i$ over this set. Recall that $Z_n$, $n \geq 0$, is the discrete skeleton of $X_{\cdot}$, defined by $X_t = Z_{N_t}$, for $t \geq 0$. With $N_t =\sup\{n\geq0:\, T_1+ \dots+T_n \leq t\}$, we thus have $Z_n = X_{T_n}$ for all $n \geq 0$. Fix an event $E$ as above. By definition of $Q_A^{0,N}$ in \eqref{E:5.1}, \eqref{E:5.2}, minding that $\frac2\pi \log N e_{A,B_N}(x_0) \to \text{hm}_A(x_0)$, see \eqref{E:2.11}, the claim \eqref{E:6.13} follows at once if one shows
\begin{equation}
\label{E:6.14}
\begin{split}
&\lim_{N \to \infty} \Big(\frac{2}{\pi} \log N  \Big)  P_{x_0}[ Z_{i\wedge T_{B_N}(Z)}= x_i,  1\leq i \leq n, \,H_0(Z) > T_{B_N}(Z)] \\
&\quad = a(x_0) \widehat{P}_{x_0} [ Z_i= x_i, \, 1\leq i \leq n], 
\end{split}
\end{equation}
where $T_{B_N}(Z)= \inf \{ n \geq 0: \, Z_n \notin B_N\}$ is the exit time for the discrete chain $Z$ and similarly for $H_0$. On account of \eqref{E:6.11}, \eqref{E:6.12}, the right-hand side of \eqref{E:6.14} equals
\begin{equation}
\label{E:6.15}
 a(x_0)\hat p_{x_0,x_1} \hat p_{x_1,x_2}\dots \hat p_{x_{n-1},x_n} = \Big(\frac{1}{4}\Big)^n a(x_n).
\end{equation}
On the other hand, applying the simple Markov property at time $n$, we can rewrite the probability on the left-hand side of \eqref{E:6.14} as
\begin{equation}
\label{E:6.16}
\begin{split}
 &P_{x_0}[ Z_{i}= x_i,  1\leq i \leq n, T_{B_N}(Z) > n, \,  \,H_0(Z) > T_{B_N}(Z)] \\[0.2em]
 &\qquad =   P_{x_0}[ Z_{i}= x_i,  1\leq i \leq n] P_{x_n}[H_0(Z) > T_{B_N}(Z)] = \Big(\frac{1}{4}\Big)^n P_{x_n}[H_0 > T_{B_N}].
 \end{split}
\end{equation}
The claim \eqref{E:6.14} then follows from \eqref{E:6.15} and \eqref{E:6.16}, together with \eqref{E:2.3} and the fact that
\begin{equation}
\label{E:6.16.0}
P_{x_n}[H_0 > T_{B_N}] = \frac{a(x_n)}{\frac2\pi \log N + O((\log N)^{-1})}, \quad \text{ as } N \to \infty,
\end{equation}
which follows readily from the optional sampling theorem applied to the martingale $a(X_{H_0 \wedge T_{B_N}})$, cf. \eqref{E:4.7}. 
\end{proof}

We now denote by $\mathbb{P}^{0,\mathbb{Z}^2}$ the law on $\Omega$, cf. \eqref{E:5.10}, of the Poisson random measure with intensity $\nu^{0,\mathbb{Z}^2}(\text dw^*)\text d u$.
These are the tilted interlacements of \cite{CPV15}. For an element $\omega = \sum_{i\in I} \delta_{(w_i^*, u_i)} \in \Omega$, we further write $\omega_u =  \sum_{i\in I: \, u_i \leq u} \delta_{w_i^*}$ for the point measure obtained by retaining the trajectories with label at most $u$ (and removing their label). Theorem \ref{T:LIMIT} then has the following consequence.

\begin{corollary} $(\alpha > 0, \, (u_N(\alpha))_N \text{ as in \eqref{E:6.2}})$
\label{C:LIMIT}
\begin{equation}
\label{C:6.17}
\text{The law of $\omega_{u_N(\alpha)}$ under $\mathbb{P}^{0,N}$ converges weakly towards the law of $\omega_{\frac\pi2 \alpha}$ under $\mathbb{P}^{0,\mathbb{Z}^2}$.}
\end{equation}
\end{corollary}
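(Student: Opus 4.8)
The plan is to deduce Corollary~\ref{C:LIMIT} from Theorem~\ref{T:LIMIT} via a standard argument relating vague convergence of intensity measures of Poisson point processes to weak convergence of the associated point processes, suitably adapted to the quotient space $W^*$. First I would fix $u := \frac{\pi}{2}\alpha$ and note, by the thinning property of Poisson processes, that $\omega_{u_N(\alpha)}$ under $\mathbb{P}^{0,N}$ is a Poisson point process on $W^*$ with intensity measure $u_N(\alpha)\,\nu^{0,N}$, while $\omega_{\frac{\pi}{2}\alpha}$ under $\mathbb{P}^{0,\mathbb{Z}^2}$ is Poisson on $W^*$ with intensity $\frac{\pi}{2}\alpha\,\nu^{0,\mathbb{Z}^2}$; rescaling, $\frac{2}{\pi}u_N(\alpha)\,\nu^{0,N} \stackrel{v}{\to} \alpha\,\nu^{0,\mathbb{Z}^2}$ is exactly the content of Theorem~\ref{T:LIMIT}, and $u_N(\alpha) = \frac{\pi}{2}\cdot\frac{2}{\pi}u_N(\alpha)$, so the intensity measures converge vaguely to the right limit.

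Next I would invoke the general principle (see e.g.\ \cite{Re87}, or Kallenberg's theory of random measures) that if $\lambda_N \stackrel{v}{\to} \lambda$ as Radon measures on a Polish space $S$, then the laws of the Poisson point processes with intensities $\lambda_N$ converge weakly (in the topology of vague convergence on the space of point measures) to the Poisson point process with intensity $\lambda$. The space here is $W^*$, which is Polish once one fixes the appropriate topology (the one making the canonical projection $\pi^*$ continuous, with killed trajectories converging as their killing times diverge); this is the same framework used in \cite{Tei09}, \cite{Sz10}. To apply this cleanly one wants $W^*$ to be locally compact, or at least to restrict attention to the relevant sub-$\sigma$-algebras: since for every finite $A \ni 0$ the measures $1_{W_A^*}\nu^{0,N}$ are finite and supported on $W_A^*$, it suffices to prove the convergence of $\omega_{u_N(\alpha)}$ restricted to $W_A^*$ for each such $A$, and then pass to the inverse limit over $A$. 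On $W_A^*$ the total masses converge (this is \eqref{E:6.3} together with the capacity computation in the proof of Proposition~\ref{T:6.1}), so one reduces to showing that the normalized trajectory laws $\big(\tfrac2\pi\log N\big)^2 Q_A^{0,N}/\|\cdot\|$ converge weakly to $Q_A^{0,\mathbb{Z}^2}/\|\cdot\|$ on $W_A$ — which is precisely \eqref{E:6.13} in the proof of Theorem~\ref{T:LIMIT}, since the finite-dimensional cylinder events $E$ considered there form a convergence-determining class.

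The main obstacle I anticipate is not probabilistic but topological/bookkeeping: one must be careful about what ``weak convergence'' means on the space of point configurations on $W^*$, because $W^*$ itself is not locally compact in the naive sense (trajectories can wander off to infinity, and in the limiting model $\widehat{P}_x$ the walk is transient so trajectories escape), and the killed trajectories in the prelimit have unbounded length as $N\to\infty$. The correct move is to exploit that all the action is confined, for test purposes, to trajectories entering a fixed finite $A$, on which the relevant measures are genuinely finite; convergence of finite measures on $W_A$ reduces to convergence of integrals against bounded continuous functions, and \eqref{E:6.14}–\eqref{E:6.16.0} already deliver this on a rich enough algebra of cylinder functions. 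So I would structure the proof as: (i) reduce to restrictions $1_{W_A^*}$; (ii) note total mass convergence from Proposition~\ref{T:6.1}; (iii) upgrade \eqref{E:6.14} to weak convergence of the (sub-probability) trajectory measures via a standard cylinder-set / monotone-class argument; (iv) conclude that the restricted Poisson processes converge, using the Laplace-functional characterization $\mathbb{E}[e^{-\langle \omega, f\rangle}] = \exp(-\int (1-e^{-f})\,d\lambda_N)$ and dominated convergence; (v) patch over $A$ via a projective-limit / Cramér–Wold type argument to obtain weak convergence of the full processes.
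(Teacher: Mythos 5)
Your proposal is correct and follows essentially the paper's route: the paper's proof is precisely your step (iv), i.e.\ it reduces weak convergence of the point processes to convergence of the Laplace functionals $\Psi_N(f)=\mathbb{E}^{0,N}[e^{-\omega_{u_N(\alpha)}(f)}]$ for non-negative continuous compactly supported $f$ (via \cite{Re87}, Prop.\ 3.19), writes $\Psi_N(f)=\exp\{-u_N(\alpha)\int_{W^*}(1-e^{-f})\,\mathrm{d}\nu^{0,N}\}$ by the Poisson formula, and observes that $1-e^{-f}$ is again continuous with compact support, so the vague convergence of Theorem \ref{T:LIMIT} applies directly. The extra scaffolding in your steps (i)--(iii) and (v) (restriction to $W_A^*$, total-mass convergence, re-deriving weak convergence of the normalized trajectory laws, projective-limit patching) is not needed, since \eqref{E:6.13} already gives convergence against all continuous compactly supported test functions on $W^*$.
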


\begin{proof}
Let 
$\Psi_N(f)= \mathbb{E}^{0,N} [e^{-\omega_{u_N(\alpha)}(f)}]$ and $\Psi(f)= \mathbb{E}^{0,\mathbb{Z}^2} [e^{-\omega_{\frac\pi2 \alpha}(f)}]$, for non-negative measurable $f:W^*\to \mathbb{R}$ denote the relevant Laplace functionals. By \cite{Re87}, Prop. 3.19, p.153, it suffices to show that $\Psi_N(f) \to \Psi(f)$ for any non-negative, continuous $f$ with compact support. But since $\omega_{u_N(\alpha)}$ is a Poisson random measure, one has (see \cite{Re87}, Prop. 3.6, p.130),
\begin{equation}
\label{E:6.18}
\Psi_N(f) = \exp \bigg\{ - u_N(\alpha) \int_{W^*} (1-e^{-f(w^*)}) \nu^{0,N}(\text dw^*)  \bigg\},
\end{equation}
and a similar formula with $\frac\pi2 \alpha$ in place of $u_N(\alpha)$ and $\nu^{0,\mathbb{Z}^2}$ in place of $ \nu^{0,N}$ holds for $\Psi(f)$. But since $1-e^{-f}$ is continuous with compact support whenever $f$ is, the desired convergence follows immediately from \eqref{E:6.13}.
\end{proof}

We conclude with a result concerning local times tailored to the purposes of the next section. One associates to $\mathbb{P}^{0,\mathbb{Z}^2}$ a field of local times $(L_{x,u})_{x\in \mathbb{Z}^2}$, defined as in \eqref{E:5.12}, for any $u>0$. The space $[0,\infty)^{\mathbb{Z}^2}$ is endowed with the product topology, and convergence in distribution in the following statement is meant in the sense of convergence in law of all finite-dimensional marginals.
\begin{lem} $(\alpha > 0, \, u_N(\alpha) \text{ as in \eqref{E:6.2}})$
\label{L:6.20}
\begin{equation} 
\begin{split}
\label{E:6.21}
&\text{$(L_{x,u_N(\alpha)})_{x\in \mathbb{Z}^2}$ under $\mathbb{P}^{0,N}$ converges in}\\
&\text{distribution towards $(L_{x,\frac\pi2 \alpha})_{x\in \mathbb{Z}^2}$ under $\mathbb{P}^{0,\mathbb{Z}^2}$.}
\end{split}
\end{equation}
\end{lem}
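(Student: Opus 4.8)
The plan is to transfer the convergence of the point measures $\omega_{u_N(\alpha)}$ established in Corollary \ref{C:LIMIT} to the associated local time fields, using that (finite collections of) local times are ``almost'' continuous functionals of the point measure. First I would reduce the statement to a single coordinate: since convergence of finite-dimensional marginals is what is claimed, and since $(L_{x_1,u},\dots,L_{x_k,u})$ is measurable with respect to $\omega$, it suffices to show that $(L_{x_1,u_N(\alpha)},\dots,L_{x_k,u_N(\alpha)})$ under $\mathbb{P}^{0,N}$ converges in law to $(L_{x_1,\frac\pi2\alpha},\dots,L_{x_k,\frac\pi2\alpha})$ under $\mathbb{P}^{0,\mathbb{Z}^2}$; a further standard step (using that the coordinates are nonnegative) lets one work with, say, joint Laplace transforms $\mathbb{E}[\exp\{-\sum_j \lambda_j L_{x_j,u}\}]$ for $\lambda_j\ge 0$.

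Next I would write these Laplace transforms via the exponential formula for Poisson point processes, exactly as in the proof of Corollary \ref{C:LIMIT}. For a fixed finite $A\ni 0$ containing $\{x_1,\dots,x_k\}$, only trajectories entering $A$ contribute, so
\begin{equation*}
\mathbb{E}^{0,N}\Big[\exp\Big\{-\sum_j \lambda_j L_{x_j,u_N(\alpha)}\Big\}\Big] = \exp\Big\{-u_N(\alpha)\int_{W^*}\big(1-e^{-\Phi(w^*)}\big)\,\nu^{0,N}(\text dw^*)\Big\},
\end{equation*}
where $\Phi(w^*) = \sum_j \lambda_j \int \text d t\, 1\{w(t)=x_j\}$ (with $w$ the representative entering $A$ at time $0$), and similarly for $\mathbb{P}^{0,\mathbb{Z}^2}$ with $\tfrac\pi2\alpha$ and $\nu^{0,\mathbb{Z}^2}$. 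Thus the lemma reduces to showing $\tfrac2\pi u_N(\alpha)\nu^{0,N}(1-e^{-\Phi}) \to \alpha\,\nu^{0,\mathbb{Z}^2}(1-e^{-\Phi})$. By Theorem \ref{T:LIMIT} we already have $\tfrac2\pi u_N(\alpha)\,\nu^{0,N}\stackrel{v}{\to}\alpha\,\nu^{0,\mathbb{Z}^2}$ in the vague topology, and $1-e^{-\Phi}$ is a bounded nonnegative functional on $W^*_A$ supported on trajectories meeting $A$; it is continuous on $W^*$ but \emph{not} compactly supported (a trajectory can spend arbitrarily long at some $x_j$), so one cannot simply invoke vague convergence.

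The main obstacle is therefore exactly this lack of compact support, i.e.\ a tightness/uniform-integrability argument is needed to control the contribution of trajectories with large total time at $A$. The plan is to split $1-e^{-\Phi} = (1-e^{-\Phi})1\{\text{local time at }A \le M\} + (1-e^{-\Phi})1\{\text{local time at }A > M\}$. The first piece is continuous with compact support (trajectories entering $A$, spending total time $\le M$ there, and — since we restrict the integrand, not the trajectory — its forward/backward legs stay in a compact portion of path space after one further truncates the return excursions outside $B_N$, which is automatic once $N$ is large and $A$ fixed), so Theorem \ref{T:LIMIT} handles it. For the second piece one bounds $1-e^{-\Phi}\le 1$ and estimates $\tfrac2\pi u_N(\alpha)\,\nu^{0,N}\big[\text{total time in }A > M\big]$ uniformly in $N$, then sends $M\to\infty$: this uses \eqref{E:5.31} (so the forward part under $\tfrac2\pi u_N(\alpha)\nu^{0,N}$, after entering $A$, is a random walk started from $\rho^{0,N}_A$ conditioned to avoid $0$ and killed on exiting $B_N$, with total mass $\tfrac2\pi u_N(\alpha)\rho^{0,N}_A(\mathbb{Z}^2)$ which converges to $\alpha\,\text{cap}(A)$ by Lemma \ref{L:4.cap}), together with the exit-time moment bound \eqref{E:2.exp3}/\eqref{E:2.exp4} (or the exponential moment of Lemma \ref{L:2.exp}) applied inside $B_N$ to show $\sup_N \tfrac2\pi u_N(\alpha)\,\nu^{0,N}[\text{time in }A>M]\to 0$ as $M\to\infty$, and the analogous fact for $\nu^{0,\mathbb{Z}^2}$ using transience of $\widehat P_x$. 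Combining the two pieces via a standard $\varepsilon/3$ argument (error from the second piece small uniformly in $N$, convergence of the first piece, error from the tail of the limit object small) gives the desired convergence of Laplace transforms and hence \eqref{E:6.21}.
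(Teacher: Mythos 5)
Your reduction to finitely many points, Laplace transforms, and the exponential formula for Poisson measures is exactly the right skeleton (and matches the paper's \eqref{E:6.23}); the gap is in how you pass to the limit in $u_N(\alpha)\,\nu^{0,N}\big(1-e^{-\Phi}\big)$. You propose to deduce it from the vague convergence of Theorem \ref{T:LIMIT} after truncating on $\{\text{local time in }A\le M\}$, asserting that the truncated functional is continuous with compact support. Neither property holds: bounding the time spent in $A$ puts no constraint whatsoever on the trajectory away from $A$ (its lifetime under $\nu^{0,N}$ grows with $N$, its excursions away from $A$, and its number of jumps in a bounded time window are arbitrary), so the truncated functional is not supported in a relatively compact subset of $W^*$; and total occupation times are not continuous functionals for any topology with respect to which the proof of Theorem \ref{T:LIMIT} establishes vague convergence --- that proof only tests $\nu^{0,N}$ against functions of finitely many jump times and positions in a finite time window, and a sequence of trajectories can converge in any such sense while making an extra, arbitrarily late return to $A$, which changes $\Phi$ by a fixed amount. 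Relatedly, your tail estimate controls the wrong quantity: since $1-e^{-\Phi}\le 1$, largeness of the local time is harmless by itself; the genuine obstruction is the dependence of $\Phi$ on arbitrarily late portions of the trajectory, and what would be needed is a bound, uniform in $N$, on the $\tfrac2\pi u_N\nu^{0,N}$-mass of trajectories in $W_A^*$ that revisit $A$ after time $T$, tending to $0$ as $T\to\infty$. Nothing of this sort is supplied, and the parenthetical claim that truncating the portions outside $B_N$ is ``automatic once $N$ is large'' is not correct, since the trajectories are only killed upon leaving the growing box $B_N$.

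The paper sidesteps all of this by never invoking vague convergence for the occupation functional. Using \eqref{E:5.31} one identifies, as in \eqref{E:6.24}, $u_N(\alpha)\,\nu^{0,N}(1-e^{-\xi})=u_N(\alpha)\,E_{e_{A,B_N}}\big[\big(1-e^{-\int_0^{T_{B_N}}V(X_t)\,\mathrm{d}t}\big)1\{H_0>T_{B_N}\}\big]$, and then shows directly, by the same computation as in \eqref{E:6.14}--\eqref{E:6.16.0} (change of measure to the tilted walk $\widehat P_x$), together with $T_{B_N}\nearrow\infty$ and dominated convergence, that $(\log N)\,E_x\big[\big(1-e^{-\int_0^{T_{B_N}}V}\big)1\{H_0>T_{B_N}\}\big]\to a(x)\,\widehat E_x\big[1-e^{-\int_0^{\infty}V}\big]$, while \eqref{E:2.11} handles the factor $e_{A,B_N}$; this yields \eqref{E:6.25} and the lemma. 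If you wish to keep your structure, this explicit representation of the restricted intensity as a conditioned-walk expectation (or an equivalent uniform-in-$N$ control of late returns to $A$) is the ingredient you must add; the abstract truncation argument as written does not close.
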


\begin{proof}
This is essentially a consequence of Theorem \ref{T:LIMIT}. Let $V: \mathbb{Z}^2\to \mathbb{R}_+$ be compactly supported. Defining $\xi : W^* \to [0,\infty)$ by
\begin{equation}
\label{E:6.22}
\xi(w^*) = \sum_x V(x) \int_{-\infty}^\infty 1\{ w(s)=x\} \text d s, \quad \text{ for any $w \in W$ s.t. $\pi^*(w)=w^*$},
\end{equation}
we see from \eqref{E:5.12} that
\begin{equation}
\label{E:6.23}
\begin{split}
&\mathbb{E}^{0,N}\Big[\exp \Big\{ - \sum_x V(x)L_{x,u_N(\alpha)} \Big\} \Big] = \mathbb{E}^{0,N}\big[\exp \big\{ - \omega_{u_N(\alpha)}(\xi) \big\} \big] \\
&\quad= \psi_N (\xi) \stackrel{\eqref{E:6.18}}{=} \exp \big\{  - u_N(\alpha) \nu^{0,N} \big(1-e^{-\xi}\big)\big\}.
\end{split}
\end{equation}
Assuming that $A=\text{supp}(V)$, it follows from \eqref{E:6.22}, \eqref{E:5.31} and \eqref{E:5.2} that
\begin{equation}
\label{E:6.24}
u_N(\alpha) \nu^{0,N} \big(1-e^{-\xi}\big)= u_N(\alpha) E_{e_{A,B_N}} \Big[ \Big(1- e^{-\int_0^{T_{B_N}} V(X_t) \text d t} \Big) 1\{H_0 > T_{B_N}\}\Big].
\end{equation}
One then shows by a calculation similar to \eqref{E:6.14}, noting that $T_{B_N} \nearrow \infty$, that for all $x\in A$, 
$$
 (\log N) \cdot E_{x} \Big[ \Big(1- e^{-\int_0^{T_{B_N}} V(X_t) \text d t} \Big) 1\{H_0 > T_{B_N}\}\Big] \stackrel{N}{\longrightarrow} a(x) \cdot \widehat{E}_{x} \Big[ 1- e^{-\int_0^{\infty} V(X_t) \text d t} \Big],
$$
so that, in view of \eqref{E:6.23} and \eqref{E:6.24},  $\psi_N (\xi)$ converges as $N \to \infty$ towards
\begin{equation}
\label{E:6.25}
 \exp \Big\{  - \frac{\pi}{2}\alpha \sum_{x \in A} \text{hm}_A(x) a(x)  \widehat{E}_{x} \Big[ 1- e^{-\int_0^{\infty} V(X_t) \text d t} \Big]\Big\} = \mathbb{E}^{0,\mathbb{Z}^2}\Big[\exp \Big\{ - \sum_x V(x)L_{x,\frac\pi2 \alpha} \Big\} \Big],
\end{equation}
where the last step in \eqref{E:6.25} follows from \eqref{E:6.10} and a computation analogous to \eqref{E:6.23}. 
\end{proof}

\section{Some links to the pinned field}\label{S:5}

As we now explain, one can naturally associate the Poisson point process $\mathbb{P}^{K,K'}$ in \eqref{E:5.11} to the excursions of a single Markov chain on a graph with vertex set $K\cup\{ x_*\}$, with $x_* \notin K' $ (cf. \eqref{E:5.0}). This is the content of Lemma \ref{L:7.1}, which will then be used for the choices \eqref{E:6.1}, along with the classical Ray-Knight theorem, to deduce isomorphisms relating the field of local times $(L_{x,u})_{x\in \mathbb{Z}^2}$, under $\mathbb{P}^{0,N}$ and $\mathbb{P}^{0,\mathbb{Z}^2}$ to certain pinned Gaussian fields, see Theorems \ref{T:7.12}, \ref{T:7.12}' and \ref{T:7.29} below.

Fix some set $K'$ with $\emptyset \neq K' \subset \subset \mathbb{Z}^2$. For the Markov chain we will consider, $x_*$ will play the role of a reference state outside $K'$ rather than a cemetery state (in particular, it is not absorbing). Define the conductances $c_{x,y} = c_{x,y}(K')  \geq 0$, $x,y \in K'\cup\{ x_*\}$ as follows:
\begin{equation}
\label{E:7.1}
\begin{split}
&c_{x,y}=1 \text{ if $x,y \in K'$, $|x-y|=1$ },\\
& c_{x,x_{*}} = c_{x_{*},x}=\sum_{y\in \mathbb{Z}^2 \setminus K': \, |x-y|=1 } 1, \text{ if } x\in \partial_{\text{int}} K' \ \big(  \stackrel{\text{def.}}{=} \partial (\mathbb{Z}^2 \setminus K')\big)\\
& c_{x,y}=0 \text{ otherwise.}
\end{split}
\end{equation}
In particular $c_{y,x}=c_{x,y}$ for all $x,y$. We also write
\begin{equation}
\label{E:7.1.1}
\lambda_x = \sum_y c_{x,y}, \quad \text{for all }x \in K'\cup\{ x_*\},
\end{equation}
and note that $\lambda_x = 4$ for all $x \in K'$. We consider the continuous time random walk on $K'\cup\{ x_*\}$ attached to these conductances with jump rates equal to $1$. Its canonical law started at $x$ is denoted by $\overline{P}^{K'}_x$, the corresponding expectation by $\overline{E}^{K'}_x$ and the canonical process by $\overline{X}_{t}^{K'}$, $t \geq 0$. For later reference, we observe that, for any point $x \in K'$,
\begin{equation}
\label{E:7.1.2}
\begin{split}
&\text{the law of $(\overline{X}_{t}^{K'})_{0\leq t < T_{K'}}$ under $\overline{P}^{K'}_x$ is the}\\
&\text{same as the law of $({X}_{t})_{0\leq t < T_{K'}}$ under ${P}_x$}.
\end{split}
\end{equation}
 The local time of the walk at $x$ is defined as
\begin{equation}
\label{E:7.2}
\bar{\ell}_t^{K',x} =\int_0^ t \, \text d s \,  1\{ \overline{X}_{s}^{K'}=x\}, \quad \text{ for } x\in K'\cup\{ x_*\}, \, t \geq 0,
\end{equation}
which is continuous and increasing to infinity as $t \to \infty$ since $\overline{X}_{\cdot}^{K'}$ is recurrent. In particular, under $\overline{P}^{K'}_{x_*}$, one has two almost surely infinite sequences $R_n$, $n \geq 1$ and $D_n$, $n \geq 1$ of successive departure times from $x_*$ and return times to $x_*$, such that $R_1=0 < D_1 < R_2< D_3< \dots$, and one can correspondingly partition the trajectory of $\overline{X}_{\cdot}^{K'}$ on $K'$ into excursions from $x_*$, given by $(\overline{X}_{D_n + t}^{K'})_{0\leq t \leq R_{n+1}-D_n}$, $n \geq 1$. By extending their value to be $x_*$ for all times $t \geq R_{n+1}-D_n$, these (random) excursions can naturally be seen to take values in the space $W^+$, cf. below \eqref{E:5.0} (recall $W^+$ contains all right-continuous, $\mathbb{Z}^2$-valued trajectories with finitely many jumps, which at a finite times reach $x_*$ and from that time on remain in $x_*$). For $s \geq 0$, we then define the random point measure on $(W^+,\mathcal{W}^+)$
collecting the excursions starting before time $s$ as
\begin{equation}
\label{E:7.3}
\bar{\mu}^s=\sum_{n \geq 1} 1\{D_n < s \} \delta_{(\overline{X}_{D_n + t}^{K'})_{0\leq t \leq R_{n+1}-D_n}},
\end{equation}
as well as 
\begin{equation}
\label{E:7.4}
\bar{\mu}_A^s = \Theta_{H_A} \circ (1\{ H_A<\infty\} \bar{\mu}^s), \quad \text{for $A \subset K'$}
\end{equation}
(also a random point measure on $(W^+,\mathcal{W}^+)$), where $\Theta_{H_A} \circ \nu = \sum_{i}\delta_{\theta_{H_A}(w_i)}$ for $\nu=\sum_{i}\delta_{w_i} $ with $w_i \in W_A^+$. In words, $\bar{\mu}_A^s$ selects the excursions in the  support of $\bar{\mu}^s$ that enter $A$ and only keeps track of their trajectories after they first enter $A$. Note that $\bar{\mu}^s= \bar{\mu}^s_{K'}$. Finally, recall the definition of $\mu_A^u$ from \eqref{E:5.30}, \eqref{E:5.31}. The following lemma relates these two quantities. Note that $K$ could very well be the empty set in what follows.

\begin{lem}$(u>0,  \,  K \subsetneq  A \subset K' \subset \subset \mathbb{Z}^2)$
\label{L:7.1}
\medskip
\noindent Let 
\begin{equation}
\label{E:7.5}
\sigma_u^{K'} = \inf \{ t \geq 0: \, \bar{\ell}_t^{{K'},x_*}  > \frac{\lambda_{x_*}}{4} u \}.
\end{equation}
Then the law of $\bar{\mu}_A^{\sigma_u^{K'}}$ under $\overline{P}^{K'}_{x_*}[\, \cdot \, | \, \bar{\ell}_{\sigma_u^{K'}}^{{K'},x} =0, \, x \in K]$ equals that of $\mu_A^u$ under $\mathbb{P}^{K,K'}$. 
\end{lem}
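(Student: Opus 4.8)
The plan is to identify both sides as Poisson random measures on $(W^+,\mathcal{W}^+)$ and to match their intensity measures by a last-exit/excursion-theoretic computation. By \eqref{E:5.31}, the right-hand side, $\mu_A^u$ under $\mathbb{P}^{K,K'}$, is a Poisson random measure with intensity $u\,P_{\rho_A^{K,K'}}[(X_t)_{0\leq t\leq T_{K'}}\in\cdot\,|\,H_K>T_{K'}]$, where $\rho_A^{K,K'}(x)=e_{A,K'}(x)P_x[H_K>T_{K'}]1_A(x)$ by \eqref{E:5.2}. So it suffices to show that, under $\overline{P}^{K'}_{x_*}[\,\cdot\,|\,\bar\ell^{K',x}_{\sigma_u^{K'}}=0,\,x\in K]$, the point measure $\bar\mu_A^{\sigma_u^{K'}}$ is Poisson with this same intensity.

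First I would recall the classical excursion theory for the recurrent chain $\overline{X}^{K'}_\cdot$ observed at its successive returns to $x_*$: stopping at the time $\sigma_u^{K'}$ defined in \eqref{E:7.5}, when the local time at $x_*$ first exceeds $\frac{\lambda_{x_*}}{4}u$, produces (via the strong Markov property at the i.i.d.\ excursion chain, together with the exponential law of the holding times at $x_*$ and the thinning of a Poisson process by its total local time clock) a Poisson number of excursions, which given their number are i.i.d.\ with the law of a single excursion from $x_*$ killed at its return to $x_*$. Concretely, the unconditioned $\bar\mu^{\sigma_u^{K'}}$ is a Poisson random measure on $W^+$ whose intensity is $\frac{\lambda_{x_*}}{4}u$ times the law of the first excursion of $\overline{X}^{K'}$ away from $x_*$ (the normalization $\lambda_{x_*}/4$ is exactly the bookkeeping between the local-time clock, rate $1$ jumps, and the rate $\lambda_{x_*}$ of leaving $x_*$; I would double-check the factor against \eqref{E:7.1.1} and \eqref{E:7.5}). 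Restricting this Poisson measure to excursions entering $A$ and shifting by $\theta_{H_A}$ — the operation defining $\bar\mu_A^{\sigma_u^{K'}}$ in \eqref{E:7.4} — preserves the Poisson property, with the pushed-forward intensity. Finally, conditioning on $\{\bar\ell^{K',x}_{\sigma_u^{K'}}=0,\ x\in K\}$ is, because distinct excursions contribute independently to the occupation of $K$, equivalent to simply keeping only those excursions that avoid $K$ entirely; this is again a restriction (thinning) of the Poisson measure, so the conditioned $\bar\mu_A^{\sigma_u^{K'}}$ remains Poisson.

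It then remains to compute this intensity and check it equals $u\,P_{\rho_A^{K,K'}}[(X_t)_{0\leq t\leq T_{K'}}\in\cdot\,|\,H_K>T_{K'}]$. Writing $\mu^{K'}_{\mathrm{exc}}$ for the law of the first $x_*$-excursion, the intensity of $\bar\mu_A^{\sigma_u^{K'}}$ after the avoid-$K$ restriction and the $\theta_{H_A}$-shift is $\frac{\lambda_{x_*}}{4}u\cdot \big(\Theta_{H_A}\circ(1\{H_A<\infty, H_K=\infty\}\,\mu^{K'}_{\mathrm{exc}})\big)$, where $H_K=\infty$ on $W^+$ means the excursion never meets $K$ before returning to $x_*$. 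I would decompose a single excursion at its \emph{last} exit from $A$ before returning to $x_*$ (a last-exit decomposition analogous to \eqref{E:4.4}), which produces the equilibrium measure $e_{A,K'}$ as the weight on the entrance point: the chain leaves $x_*$, wanders, and the last visit to $A$ before absorption at $x_*$ happens at $x\in A$ with the right weight, after which it proceeds from $x$ conditioned to avoid $A$ until leaving $K'$. Combined with the $H_K=\infty$ constraint and \eqref{E:7.1.2} (which says the excursion restricted to times before $T_{K'}$ is just SRW on $\mathbb{Z}^2$ killed outside $K'$), this yields exactly the measure $e_{A,K'}(x)P_x[H_K>T_{K'}]\,P_x[(X_t)_{0\leq t\leq T_{K'}}\in\cdot\,|\,H_K>T_{K'}]$ on the forward part, i.e.\ $\rho_A^{K,K'}(x)$ times the conditioned walk law, up to the scalar $\frac{\lambda_{x_*}}{4}$; matching this scalar against the definition \eqref{E:7.5} (the $\frac{\lambda_{x_*}}{4}u$ in $\sigma_u$ was chosen precisely to cancel it) closes the argument.

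The main obstacle is the bookkeeping in this last step: getting the normalization constants exactly right (the interplay of the $\lambda_{x_*}$-rate at $x_*$, the unit jump rates, the local-time clock in \eqref{E:7.5}, and the equilibrium-measure normalization $\sum_{x\in A}e_{A,K'}(x)$ versus the last-exit weight) and carefully justifying that conditioning on $\{\bar\ell^{K',x}_{\sigma_u^{K'}}=0\}$ genuinely reduces to an excursion-wise thinning — this requires that the local-time clock at $x_*$ and the event that an excursion hits $K$ be handled jointly, which is where I would be most careful. A clean way around delicate constant-chasing is to verify the identity of the two Poisson measures through their Laplace functionals on occupation times, reusing the computation in \eqref{E:6.24}--\eqref{E:6.25} and the capacity representation \eqref{E:4.1}/\eqref{E:5.2}, which pins down the intensity without ever leaving the realm of already-established formulas.
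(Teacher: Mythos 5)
Your proposal is correct and essentially reproduces the paper's argument: the paper likewise shows that the number of excursions begun before $\sigma_u^{K'}$ is Poisson of parameter $u\lambda_{x_*}/4$ with i.i.d.\ excursions started from $\lambda^{K'}=\frac{4}{\lambda_{x_*}}e_{K',K'}$ (so the scalar cancels exactly as you anticipate), then passes to $\bar\mu_A^{\sigma_u^{K'}}$, and treats the conditioning on $\{\bar\ell^{K',x}_{\sigma_u^{K'}}=0,\,x\in K\}$ as an independent thinning by splitting off the sub-point-measure of excursions hitting $K$. The one step you handle differently is the identification of the shifted intensity $u\,P_{e_{A,K'}}[(X_{t\wedge T_{K'}})_{t\geq 0}\in\cdot\,]$, which the paper obtains by invoking Theorem 2.1 of \cite{Tei09}, whereas you sketch a last-exit/time-reversal computation (essentially the sweeping identity \eqref{E:2.15}); that route works, but note that the forward trajectory after the \emph{first} entrance into $A$ is not conditioned to avoid $A$ --- the avoidance of $A$ pertains to the time-reversed, post-last-exit piece --- which your final formula gets right even though the verbal description momentarily conflates first entrance and last exit.
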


\begin{rmk}
The event $\{ \bar{\ell}_{\sigma_u^{K'}}^{{K'},x} =0, \, x \in K \}= \{ \overline{X}_{t}^{K'} \notin K, \, 0\leq t \leq \sigma_u^{K'}\}$ has positive probability under $\overline{P}^{K'}_{x_*}$, for every  $u > 0$. In particular there is no issue in defining the conditional law above. \hfill $\square$
\end{rmk}
\begin{proof}
By \eqref{E:5.31} it is enough to check that $\bar{\mu}_A^{\sigma_u^{K'}}$ is a Poisson random measure under $\overline{P}^{K'}_{x_*}[\, \cdot \, | \,\bar{\ell}_{\sigma_u^{K'}}^{{K'},x} =0, \, x \in K]$ with intensity $u P_{\rho_{A}^{K,K'}}[(X_{t})_{0\leq t\leq T_{K'}} \in \cdot \, | \,H_K > T_{K'}]$. Let $\tau_i = D_{i}-R_i$, $i \geq 1$, be the time spent in $x_*$ before the $i$-th excursion. By construction, $\tau_i$, $i \geq 1$, are i.i.d exponential random variables with parameter $1$. Moreover, by \eqref{E:7.5}, the event $\{D_n < \sigma_u^{K'} \} $ is the same as $\{ \tau_1 +\dots+\tau_n < 4 u/ \lambda_{x_*}\}$. Hence, the number of excursions in the  support of the measure $\bar{\mu}^{\sigma_u^{K'}}$ in \eqref{E:7.3}, which equals $\sup_n \{D_n < \sigma_u^{K'} \}$ (with the convention $\sup \emptyset = 0$) is a Poisson variable with parameter $u\lambda_{x_*}/4$. Moreover, applying the strong Markov property at times $R_n$, we see that the excursions $(\overline{X}_{D_n + t}^{K'})_{0\leq t \leq R_{n+1}-D_n}$ are independent, and distributed as $\overline{P}_{\lambda^{K'}}[(\overline{X}_{ t}^{K'})_{0\leq t \leq T_{K'}} \in \cdot]$, where $\lambda^{K'}$ is the measure supported on $\partial_{\text{int}}K'$ defined as
\begin{equation}
\label{E:7.6}
\lambda^{K'}(x)=\frac{c_{x_*,x}}{\sum_{y \sim x_*}c_{x_*,y}}\stackrel{\eqref{E:7.1},\eqref{E:7.1.1}}{=} \frac1{\lambda_{x_*}}\sum_{y\in \mathbb{Z}^2 \setminus K': \, |x-y|_1=1 } 1 \stackrel{\eqref{E:2.14}}{=} \frac{4}{\lambda_{x_*}} e_{K', K'}(x).
\end{equation}
On account of \eqref{E:7.1.2}, we have thus obtained that $\bar{\mu}^{\sigma_u^{K'}}$ is a Poisson random measure on $(W^+, \mathcal{W}^+)$ with intensity
\begin{equation}
\label{E:7.7}
(u\lambda_{x_*}/4) \cdot \overline{P}_{\lambda^{K'}}[(\overline{X}_{ t}^{K'})_{0\leq t \leq T_{K'}} \in \cdot] \stackrel{\eqref{E:7.6}}{=} u {P}_{ e_{K', K'}}[({X}_{ t \wedge T_{K'}})_{t \geq 0} \in \cdot].
\end{equation}
From \eqref{E:7.7}, \eqref{E:7.4}, and Theorem 2.1 of \cite{Tei09} (we can view \eqref{E:7.7} in a transient setup with $x_*$ as absorbing state), we infer that $\bar{\mu}^{\sigma_u^{K'}}_A$ is a Poisson random measure with intensity $u {P}_{ e_{A, K'}}[({X}_{ t \wedge T_{K'}})_{t \geq 0} \in \cdot]$ under $\overline{P}^{K'}_{x_*}$. Finally, since $A$ contains $K$, we split the measure $\bar{\mu}^{\sigma_u^{K'}}_A$ as
\begin{equation}
\label{E:7.8}
\bar{\mu}^{\sigma_u^{K'}}_A = \gamma_{K} + \tilde{\gamma}
\end{equation}
where $\gamma_{K}$ is the point measure that collects the trajectories in the support of $\bar{\mu}^{\sigma_u^{K'}}_A $ which enter $K$ before exiting $K'$ and $\tilde{\gamma}$ contains those for which $H_K= \infty$. The random measures $\gamma_{K}$ and $\tilde{\gamma}$ are independent and $\{ \bar{\ell}_{\sigma_u^{K'}}^{{K'},x} =0, \, x \in K \} = \{ \gamma_{K}(W^+)=0\}$. Thus, the law of $\bar{\mu}_A^{\sigma_u^{K'}}$ under $\overline{P}^{K'}_{x_*}[\, \cdot \, | \, \bar{\ell}_{\sigma_u^{K'}}^{{K'},x} =0, \, x \in K]$ is the same as that of $\tilde{\gamma}$ under $\overline{P}^{K'}_{x_*}$, which is a Poisson random measure with intensity
\begin{equation*}
u {P}_{ e_{A, K'}}[({X}_{ t \wedge T_{K'}})_{t \geq 0} \in \cdot, \, H_0 = \infty] \stackrel{\eqref{E:5.31}}{=} u P_{\rho_{A}^{0,K'}}[(X_{t})_{0\leq t\leq T_{K'}} \in \cdot \, | \,H_0 > T_{K'}].
\end{equation*}
\end{proof}

We now specialize to the case $K,K'$ as in \eqref{E:6.1}, in which we are ultimately interested, and refer the reader to Remark \ref{R:7.29}, 3) below for a more general version of the following result. We routinely write $\overline{P}^{N}_x$, $\overline{X}^{N}_t$, etc. in what follows when referring to the Markov chain defined below \eqref{E:7.1.1} with $K'=B_N$. Recall the law $\mathbf{P}_N^G$ of the Gaussian free field with zero boundary condition outside $B_N$, cf. \eqref{E:7.9}, and define, for $h \in \mathbb{R}$,
\begin{equation}
\label{E:7.10}
\Phi_x(h) = \varphi_x + P_x[H_0<T_{B_N}](h-\varphi_0) = \Phi_x(0) + P_x[H_0<T_{B_N}]h , \quad x \in \mathbb{Z}^2, 
\end{equation}
and set
\begin{equation}
\label{E:7.10.0}
\widetilde{\varphi}_x = \Phi_x(0), \quad x \in \mathbb{Z}^2
\end{equation}
(so that $\widetilde{\varphi}_0 = 0$). Observing that $g_{B_N}(x,0)= P_x[H_0 < T_{B_N}]g_{B_N}(0,0)$, we deduce that $(\widetilde{\varphi}_x)_{x\in \mathbb{Z}^2}$ is orthogonal to (and hence independent of) $\varphi_0$ under $\mathbf{P}_N^G$, i.e. $\mathbf{E}_N^G[\widetilde{\varphi}_x \varphi_0]=0$ for all $x \in \mathbb{Z}^2$. Thus, the law of $(\Phi_x(h))_{x \in \mathbb{Z}^2}$, $h \in \mathbb{R}$, is a choice of regular conditional distribution for the field $(\varphi_x)_{x\in \mathbb{Z}^2}$ given its value $\varphi_0=h$. Moreover, using \eqref{E:2.13.0}, cf. also the calculation leading to Lemma 1.2 of \cite{RoS13}, we obtain that $\widetilde\varphi$ is a centered Gaussian field with covariance
 \begin{equation}
 \label{E:7.11}
\mathbf{E}^G_N[\widetilde{\varphi}_x \widetilde{\varphi}_y] = g_{B_{N}\setminus \{ 0\}}(x,y), \quad \text{ for all }x ,y \in \mathbb{Z}^2.
 \end{equation}

The above Gaussian field(s) can be linked to the field of local times $(L_{x,u})_{x\in \mathbb{Z}^2}$ associated to the interlacement $\mathbb{P}^{0,N}$, see \eqref{E:5.12}, as follows.
 
\begin{thm} (Pinned isomorphism theorem, finite volume) 
\label{T:7.12}

\medskip
\noindent
For all integers $N \geq 1$, $u\in (0, \infty)$, and with $\widetilde{\varphi}$ as defined in \eqref{E:7.10},
\begin{equation}
\label{E:7.12}
\begin{split}
&\text{the law of }\Big(L_{x,u} + \frac12 \widetilde\varphi_x^{\,2}\Big)_{x\in B_N} \text{ under } \mathbb{P}^{0,N} \otimes \mathbf{P}_N^G,\\
&\text{is the same as the law of } \Big( \frac12 ( \widetilde\varphi_x + h_x^N(u))^2 \Big)_{x\in B_N} \text{ under $\mathbf{P}_N^G$},
\end{split}
\end{equation}
where
\begin{equation}
\label{E:7.13}
h_x^N(u) = P_x[H_0 >T_{B_N}] (2u)^{1/2}.
\end{equation}
\end{thm}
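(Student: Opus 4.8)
The plan is to deduce Theorem \ref{T:7.12} from the classical second Ray--Knight isomorphism applied to the recurrent Markov chain $\overline{X}^N$ on $B_N \cup \{x_*\}$ introduced below \eqref{E:7.1.1}, combined with the excursion-decomposition identity of Lemma \ref{L:7.1} (taken with $K=\{0\}$, $K'=B_N$, $A=B_N$) and a Gaussian disintegration at the vertex $0$. First I would recall that, for the walk $\overline{X}^N$ started at $x_*$ and run until the local time at $x_*$ reaches a level $v$, the generalized second Ray--Knight theorem (see \cite{MR06}, or the version in \cite{Sz12b}) states that the field of local times $(\bar\ell^{N,x}_{\sigma})_{x}$ at that time, added to $\frac12$ of the square of the Gaussian free field associated with the chain killed at $x_*$ (whose covariance is $g_{B_N}(\cdot,\cdot)$, since killing $\overline{X}^N$ at $x_*$ reproduces SRW killed outside $B_N$ by \eqref{E:7.1.2}), equals in law $\frac12(\varphi + \sqrt{v/\,?}\,)^2$ shifted by the appropriate constant vector — here one must be careful with the normalization of local time and the rate-$1$ jump convention, which is exactly why Lemma \ref{L:7.1} carries the factor $\lambda_{x_*}/4$ in \eqref{E:7.5}. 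Matching the time-change $\sigma_u^{N}$ of \eqref{E:7.5} with the level in the Ray--Knight statement is a bookkeeping step I would do explicitly but not belabor.

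The substantive point is the conditioning on $\{0 \text{ not visited}\}$, i.e. passing from the unconditioned chain to $\overline{P}^N_{x_*}[\,\cdot\mid \bar\ell^{N,0}_{\sigma}=0\,]$, which is what Lemma \ref{L:7.1} tells us produces exactly the local-time field $(L_{x,u})_x$ of $\mathbb{P}^{0,N}$. On the Gaussian side this conditioning corresponds to disintegrating the free field $\varphi$ under $\mathbf{P}_N^G$ with respect to its value $\varphi_0$ at the origin: by \eqref{E:7.10}--\eqref{E:7.11}, writing $\varphi_x = \widetilde\varphi_x + P_x[H_0<T_{B_N}]\varphi_0$ decomposes $\varphi$ into the independent pair $(\widetilde\varphi, \varphi_0)$, where $\widetilde\varphi$ has covariance $g_{B_N\setminus\{0\}}$ — precisely the Green function of the chain conditioned/killed to avoid $0$. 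So the plan is: (i) write the Ray--Knight identity for $\overline{X}^N$; (ii) condition both sides on the event that $0$ is avoided up to time $\sigma^N_u$, using Lemma \ref{L:7.1} on the local-time side and Gaussian conditioning on $\varphi_0$ on the field side; (iii) on the event $\{0\text{ avoided}\}$ the shift vector in the Ray--Knight theorem, which a priori is $\sqrt{2v}\,P_x[H_{x_*}>\,\cdot\,]$-type, gets replaced by the conditioned harmonic function $P_x[H_0>T_{B_N}]\sqrt{2u}$, i.e. exactly $h^N_x(u)$ of \eqref{E:7.13}; and (iv) read off \eqref{E:7.12} with $\widetilde\varphi$ in place of $\varphi$.

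Concretely, step (ii)--(iii) is where I expect the real work: one needs that conditioning the Ray--Knight identity on $\{\bar\ell^{N,0}_\sigma=0\}$ is consistent on the two sides, and this is not automatic because on the right-hand side of the Ray--Knight theorem the field $\frac12(\varphi+h)^2$ does not obviously ``see'' the event $\{$local time at $0$ is $0\}$. The clean way is to note that $\{\bar\ell^{N,0}_\sigma=0\}$ on the left corresponds, via the isomorphism, to the event $\{\frac12(\varphi_0+h_0)^2 = \frac12\varphi_0^2\}$ — but $h_0^N(u)=P_0[H_0>T_{B_N}]\sqrt{2u}=0$ since $P_0[H_0>T_{B_N}]=0$ — so actually $h_0^N(u)=0$ identically, meaning the event is $\{L_{0,u}=0\}$ which holds $\mathbb{P}^{0,N}$-a.s. anyway, and on the Gaussian side no constraint on $\varphi_0$ is imposed by the right-hand side. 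Hence the correct formulation is not ``condition'' but rather: the pinned interlacement $\mathbb{P}^{0,N}$ is obtained by conditioning $\overline{X}^N$ to avoid $0$, and under this conditioning the Green function becomes $g_{B_N\setminus\{0\}}$, so the Gaussian field appearing in Ray--Knight is the one with that covariance, namely $\widetilde\varphi$, while the harmonic shift, which for the avoid-$0$ chain started from equilibrium on $\partial B_N$ is the $0$-avoiding escape probability, is $h^N_x(u)$. I would therefore re-run the proof of the second Ray--Knight theorem directly for the chain $\overline{X}^N$ conditioned to avoid $0$ — which is again a nice recurrent chain on $(B_N\setminus\{0\})\cup\{x_*\}$ — so that the identity comes out already in the form \eqref{E:7.12}; the excursion counting of Lemma \ref{L:7.1} guarantees that the local-time field of this conditioned chain, stopped at the right level, is exactly $(L_{x,u})_{x\in B_N}$ under $\mathbb{P}^{0,N}$. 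The main obstacle, then, is purely one of normalization and of correctly identifying the deterministic shift: verifying that the escape-probability function arising from the equilibrium starting measure on $\partial_{\mathrm{int}}B_N$ in the conditioned chain, after the $\lambda_{x_*}/4$ time-change, produces precisely $h^N_x(u)=P_x[H_0>T_{B_N}](2u)^{1/2}$ and not some constant multiple of it. Once that constant is pinned down (using $\lambda_{x_*}=\sum_{y\sim x_*}c_{x_*,y}$ and the relation \eqref{E:7.6} between $\lambda^{N}$ and $e_{B_N,B_N}$), \eqref{E:7.12} follows.
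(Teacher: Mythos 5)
Your overall architecture (the generalized second Ray--Knight identity \eqref{E:7.14} for $\overline{X}^N$, Lemma \ref{L:7.1} with $K=\{0\}$, $K'=B_N$, $A=B_N$, and the decomposition $\varphi_x=\widetilde\varphi_x+P_x[H_0<T_{B_N}]\varphi_0$) is the right one and is the paper's, but the step you yourself flag as ``the real work'' is where the proposal goes wrong. First, your reason for abandoning the conditioning route rests on a misreading: the event to condition on is not read off the target statement \eqref{E:7.12} but off the unconditioned identity \eqref{E:7.14}, where on the Gaussian side it is $\{\tfrac12(\varphi_0+\sqrt{2u})^2<\varepsilon\}$. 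This is a genuine constraint: it pins $\varphi_0$ near $-\sqrt{2u}$, and that is precisely the mechanism producing the spatially varying shift, since $\Phi_\cdot(-\sqrt{2u})+\sqrt{2u}=\widetilde\varphi_\cdot+\sqrt{2u}\,P_\cdot[H_0>T_{B_N}]=\widetilde\varphi_\cdot+h_\cdot^N(u)$. The paper's proof consists exactly of making this $\varepsilon\to0$ disintegration rigorous (handling the sign ambiguity in $\psi_0^u\mapsto\varphi_0$ and the continuity/monotonicity bounds in \eqref{E:7.15}--\eqref{E:7.22}, using crucially that the left-hand side of \eqref{E:7.14} is a sum of two nonnegative fields so that forcing the sum at $0$ to vanish forces $\bar\ell^{N,0}_{\sigma^N_u}=0$ and $\varphi_0=0$ separately). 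None of this appears in your proposal.

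Second, the replacement you offer --- re-running Ray--Knight directly for ``the chain conditioned to avoid $0$'' --- does not yield \eqref{E:7.12} ``already in that form.'' You conflate the walk \emph{killed} at $0$ (whose Green function is $g_{B_N\setminus\{0\}}$, i.e. the covariance of $\widetilde\varphi$, which is the field $\varphi$ conditioned on $\varphi_0=0$) with the walk \emph{conditioned to avoid} $0$, which is the Doob $h$-transform with $h(x)=P_x[H_0>T_{B_N}]$, i.e. the network with conductances $c_{x,y}h(x)h(y)$. The Ray--Knight theorem applied to that conditioned chain produces \emph{its own} Gaussian field, with covariance $g_{B_N\setminus\{0\}}(x,y)/(h(x)h(y))$, and a \emph{constant} shift $\sqrt{2u}$ --- never a harmonic-function shift; no form of Ray--Knight outputs $h_x^N(u)$ directly. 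One can repair this route by conjugating: multiplying the resulting identity through by $h(x)^2$ turns the conditioned chain's field into $\widetilde\varphi_x$, the constant shift into $h(x)\sqrt{2u}=h_x^N(u)$, and (after matching the excursion counts and the $\lambda_{x_*}/4$ normalization) the conditioned local times into $L_{x,u}$ --- but this conjugation step is exactly what your proposal omits, and the ``constant normalization'' you say remains to be pinned down is in fact the non-constant function $h$. As written, the proof does not go through.
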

\begin{proof}
Our starting point is the generalized second Ray-Knight theorem, see for instance Theorem 8.2.2 in \cite{MR06} or Theorem 2.17 in \cite{Sz12b}, applied to the (recurrent) Markov chain $\overline{X}_{\cdot}^N$, which yields that
\begin{equation}
\label{E:7.14}
\Big(\bar\ell_{\sigma_u^N}^{N,x} + \frac12 \varphi_x^{\,2}\Big)_{x\in B_N} \text{ under } \overline{P}_{x_*}^{N} \otimes \mathbf{P}_N^G, \text{ has the same law as }( \psi_x^u )_{x\in B_N} \text{ under $\mathbf{P}_N^G$},
\end{equation}
where $ \psi_x^u = \frac12 ( \varphi_x + \sqrt{2u})^2$, and $\varphi$ is the Gaussian field defined in \eqref{E:7.9}. We first consider the conditional law of $\psi_{\cdot}^u$ given $\psi_0^u$. From the discussion following \eqref{E:7.10}, we know that $\mathbf{P}_N^G[\psi_{\cdot}^u \in \cdot | \varphi_0] = \mathbf{P}_N^G[ \frac12 ( \Phi_{\cdot}(\varphi_0) + \sqrt{2u})^2 \in \cdot]$ $\mathbf{P}_N^G$-a.s., with $\Phi_{\cdot}$ as defined in \eqref{E:7.10}.
Slight care is needed when conditioning on $\psi_0^u$ instead, by which one loses the information on $\text{sign}(\varphi_0 + \sqrt{2u})$ due to the square. By first conditioning on $\varphi_0$, one obtains, $\mathbf{P}_N^G$-a.s.,
\begin{equation}
\label{E:7.15}
\begin{split}
&\mathbf{P}_N^G[\psi_{\cdot}^u \in \cdot | \psi_0^u]= \mathbf{P}_N^G\big[ \mathbf{P}_N^G[\psi_{\cdot}^u \in \cdot  | \varphi_0 ] \, \big| \psi_0^u\big]\\
&\quad= F^{+}(\psi_0^u) \cdot \mathbf{E}_N^G[1\{\varphi_0 \geq -\sqrt{2u} \}\, | \, \psi_0^u] + F^{-}(\psi_0^u)  \cdot \mathbf{E}_N^G[1\{\varphi_0 < -\sqrt{2u} \}\, | \, \psi_0^u] 
\end{split}
\end{equation} 
(the dot stands for any fixed measurable subset of $\overline{\Omega}$), where
\begin{equation}
\label{E:7.16}
F^{\pm} (t) = \mathbf{P}_N^G\bigg[ \frac12 \Big( \Phi_{\cdot}\big( \pm\sqrt{2t} - \sqrt{2u}\, \big) + \sqrt{2u}\Big)^2 \in \cdot\bigg].
\end{equation}
Let $\varepsilon > 0$ and consider the conditional probability $\mathbf{P}_N^G[\psi_{\cdot}^u \in \cdot \, | \, \psi_0^u < \varepsilon]$. Substituting \eqref{E:7.15}, bounding the (non-negative) continuous functions $F^{\pm}(\cdot)$ from above by their supremum over $[0,\varepsilon]$ (and similarly from below), and noting, with $\phi(\lambda)= \mathbf{P}_N^G[\varphi_0 \leq \lambda]$, abbreviating $\lambda= -\sqrt{2u}$, $\tilde\varepsilon = \sqrt{2\varepsilon}$, that 
$$
 \mathbf{P}_N^G[\varphi_0 \geq -\sqrt{2u} \, | \, \psi_0^u  < \varepsilon] = \frac{\phi(\lambda +\tilde\varepsilon)-\phi(\lambda)}{\phi(\lambda+\tilde\varepsilon)- \phi(\lambda-\tilde\varepsilon)} \stackrel{\varepsilon \to 0^+}{\longrightarrow}\frac12
$$
(using for instance the mean value theorem in determining the limit), along with a similar result when $\varphi_0 \leq -\sqrt{2u}$ instead, and observing that $F^{+} (0)= F^{-} (0)$, cf. \eqref{E:7.16}, one readily infers that
\begin{equation}
\label{E:7.17}
\lim_{\varepsilon \searrow 0} \mathbf{P}_N^G[\psi_{\cdot}^u \in \cdot \, | \, \psi_0^u < \varepsilon] = \mathbf{P}_N^G\bigg[ \frac12 \Big( \Phi_{\cdot}\big(- \sqrt{2u}\, \big) + \sqrt{2u}\Big)^2 \in \cdot\bigg].
\end{equation}
On the other hand, using \eqref{E:7.14} and conditioning on $\varphi_0$, one also has
\begin{equation}
\begin{split}
\label{E:7.18}
\mathbf{P}_N^G[\psi_{\cdot}^u \in \cdot \, | \, \psi_0^u < \varepsilon]
&=\overline{P}_{x_*}^{N} \otimes \mathbf{P}_N^G\Big[ \Big(\bar\ell_{\sigma_u^N}^{N,\cdot} + \frac12 \Phi_{\cdot}(\varphi_0)^{\,2}\Big)\in \cdot \, \Big | \bar\ell_{\sigma_u^N}^{N,0} + \frac12 \varphi_{0}^{\,2}<  \varepsilon\Big].
\end{split}
\end{equation}
Hence, introducing, for $\delta > 0$,
\begin{equation}
\label{E:7.19}
G_{\delta}(h) = \overline{P}_{x_*}^{N}\Big[ \Big(\bar\ell_{\sigma_u^N}^{N,\cdot} + \frac12 \Phi_{\cdot}(h)^{\,2}\Big)\in \cdot, \, \bar\ell_{\sigma_u^N}^{N,0} \leq \delta \Big],  \quad \text{ for } h \in \mathbb{R}, \, \delta > 0,
\end{equation}
which is continuous in $h$ and increasing in $\delta$, one obtains using \eqref{E:7.18} and noting that $ \bar\ell_{\sigma_u^N}^{N,0}$ and $\frac12 \varphi_0^2$ are both non-negative, for all $\delta > 0$ and $\varepsilon < \delta$, 
\begin{equation}
\label{E:7.20}
\begin{split}
\mathbf{P}_N^G[\psi_{\cdot}^u \in \cdot \, | \, \psi_0^u < \varepsilon] \leq \frac{\mathbf{E}_N^G[G_{\varepsilon}(\varphi_0) 1\{\frac12 \varphi_0^2 < \varepsilon \}]}{ \overline{P}_{x_*}^{N}[\bar\ell_{\sigma_u^N}^{N,0} = 0] \mathbf{P}_N^G[\frac12 \varphi_0^2 < \varepsilon]} \leq \frac{\sup_{|h|< \sqrt{2\varepsilon}}G_{\delta}(h)}{\overline{P}_{x_*}^{N}[\bar\ell_{\sigma_u^N}^{N,0} = 0]},
\end{split}
\end{equation}
which readily yields, taking first $\delta \to 0$, then $\varepsilon \to 0$,
\begin{equation}
\label{E:7.21}
\limsup_{\varepsilon \to 0}\mathbf{P}_N^G[\psi_{\cdot}^u \in \cdot \, | \, \psi_0^u < \varepsilon] \leq \frac{G_{0}(0)}{\overline{P}_{x_*}^{N}[\bar\ell_{\sigma_u^N}^{N,0} = 0]}.
\end{equation}
Similarly, one has the lower bound
\begin{equation*}
\mathbf{P}_N^G[\psi_{\cdot}^u \in \cdot \, | \, \psi_0^u < \varepsilon] \geq \frac{\mathbf{E}_N^G[G_{0}(\varphi_0) 1\{\frac12 \varphi_0^2 < \varepsilon \}]}{ \overline{P}_{x_*}^{N}[\bar\ell_{\sigma_u^N}^{N,0} < \varepsilon] \mathbf{P}_N^G[\frac12 \varphi_0^2 < \varepsilon]} \geq \frac{\inf_{|h|< \sqrt{2\varepsilon}}G_{0}(h)}{ \overline{P}_{x_*}^{N}[\bar\ell_{\sigma_u^N}^{N,0} < \varepsilon]},
\end{equation*}
which, upon letting $\varepsilon \to 0$ and along with \eqref{E:7.21}, implies that
\begin{equation}
\label{E:7.22}
\lim_{\varepsilon \searrow 0} \mathbf{P}_N^G[\psi_{\cdot}^u \in \cdot \, | \, \psi_0^u < \varepsilon] = \frac{G_{0}(0)}{\overline{P}_{x_*}^{N}[\bar\ell_{\sigma_u^N}^{N,0} = 0]} \stackrel{\eqref{E:7.19}}{=} \overline{P}_{x_*}^{N}\Big[ \Big(\bar\ell_{\sigma_u^N}^{N,\cdot} + \frac12 \Phi_{\cdot}(0)^{\,2}\Big)\in \cdot \, \Big| \, \bar\ell_{\sigma_u^N}^{N,0} = 0 \Big]
\end{equation}
The claim \eqref{E:7.12} then follows from \eqref{E:7.18}, \eqref{E:7.22} and Lemma \ref{L:7.1}, since $\widetilde{\varphi}_{\cdot}=\Phi_{\cdot}(0)$, see \eqref{E:7.10}, and because
$$
\Phi_{\cdot}\big(- \sqrt{2u}\, \big) + \sqrt{2u} \stackrel{\eqref{E:7.10}}{=} \Phi_{\cdot}(0)- \sqrt{2u}P_{\cdot}[H_0< T_{B_N}] + \sqrt{2u} \stackrel{\eqref{E:7.13}}{=} \widetilde{\varphi}_{\cdot} + h_{\cdot}^N(u).
$$
\end{proof}

\begin{rmk}
\label{R:7.29}
1) The fact that we pin at $0$ (rather than some other value) plays a special role. Indeed, looking at \eqref{E:7.14}, we have crucially used that the left-hand side is a sum of two \textit{non-negative} fields, hence forcing their sum to be $0$ is tantamount to requiring that they vanish individually. If one chooses to fix $\psi_0^u$ to some other value (which can be done, see \eqref{E:7.10}), a non-trivial convolution remains.\\
\noindent 2) Although we are only concerned with the Markov chain $\overline{X}_{\cdot}^N$, the above proof can be applied without changes to the setting considered for instance in Ch. 2.4, p.52 of \cite{Sz12b}, thus yielding a pinned version of the generalized second Ray-Knight theorem for any (recurrent) random walk on a finite weighted graph. \\
\noindent 3) One can also pin on more general sets $K$, thereby obtaining the following theorem. We omit its proof, which follows the lines of that above, with the necessary modifications. Denote by $\mathbf{P}_{K'}^G$ the law of the GFF killed outside $K'$, i.e. as in \eqref{E:7.9} but with $g_{K'}$ in place of $g_{B_N}$, see \eqref{E:2.13}. For $K \subset K' \subset \subset \mathbb{Z}^2$, $x \in \mathbb{Z}^2$, define the field
\begin{equation}
\label{E:40.40}
\begin{split}
\Phi_x^K((h_y)_{y\in K}) &= \varphi_x + \sum_{y\in K}P_x[H_K < T_{K'}, X_{H_K}=y](h_y-\varphi_y) \\
&= \Phi_x^K(0,\dots, 0) + E_x[h_{X_{H_K}}  1\{H_K < T_{K'}\}].
\end{split}
\end{equation}
This corresponds to a choice of regular conditional distribution for $\mathbf{P}_{K'}^G$ given the values of the field in $K$.
\end{rmk}
\begin{Tbis} $(K \subset K' \subset \subset \mathbb{Z}^2, \, u > 0)$
\label{T:7.12'}
\begin{equation}
\label{E:7.12'}
\begin{split}
&\text{The law of }\Big(L_{x,u} + \frac12  \big(\Phi_x^K(0,\dots, 0)\big)^{\,2}\Big)_{x\in K'} \text{ under } \mathbb{P}^{K,K'} \otimes \mathbf{P}_{K'}^G,\\
&\text{is the same as the law of } \Big( \frac12 \big( \Phi_x^K(0,\dots, 0) + h_x^{K,K'}(u)\big)^2 \Big)_{x\in K'} \text{ under $\mathbf{P}_{K'}^G$},
\end{split}
\end{equation}
where
\begin{equation}
\label{E:7.13'}
h_x^{K,K'}(u) = P_x[H_K >T_{K'}] (2u)^{1/2}.
\end{equation}
\end{Tbis}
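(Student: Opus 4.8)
The plan is to repeat the proof of Theorem~\ref{T:7.12} essentially verbatim, with the scalar conditioning on $\varphi_0$ (resp.\ on $\psi_0^u$) replaced by the vector-valued conditioning on $(\varphi_y)_{y\in K}$ (resp.\ on $(\psi_y^u)_{y\in K}$, where $\psi_x^u=\frac12(\varphi_x+\sqrt{2u})^2$). First I would apply the generalized second Ray--Knight theorem to the recurrent Markov chain $\overline{X}_\cdot^{K'}$ attached to the conductances \eqref{E:7.1} (using that $\lambda_x=4$ on $K'$ and that, by \eqref{E:7.1.2}, the Green function of $\overline{X}_\cdot^{K'}$ stopped at its first visit to $x_*$ coincides with $g_{K'}$, the covariance of $\mathbf{P}_{K'}^G$), which yields the analogue of \eqref{E:7.14}:
\[
\Big(\bar\ell_{\sigma_u^{K'}}^{K',x}+\frac12\varphi_x^{\,2}\Big)_{x\in K'}\text{ under }\overline{P}_{x_*}^{K'}\otimes\mathbf{P}_{K'}^G\ \stackrel{d}{=}\ \big(\psi_x^u\big)_{x\in K'}\text{ under }\mathbf{P}_{K'}^G.
\]
Here $\Phi_\cdot^K((h_y)_{y\in K})$ from \eqref{E:40.40} provides a regular conditional distribution of $\varphi$ given $(\varphi_y)_{y\in K}=(h_y)_{y\in K}$, exactly as explained around \eqref{E:40.40}.

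Next I would disintegrate both sides of this identity on the event $\{\psi_y^u<\varepsilon,\ y\in K\}$ and let $\varepsilon\to0^+$. On the Gaussian side, conditioning on $(\psi_y^u)_{y\in K}$ loses, for each $y\in K$, the sign of $\varphi_y+\sqrt{2u}$, so the conditional law of $\psi_\cdot^u$ is a finite mixture, indexed by $\eta\in\{\pm1\}^K$, of the laws $\mathbf{P}_{K'}^G[\frac12(\Phi_\cdot^K((-\sqrt{2u}+\eta_y\sqrt{2\psi_y^u})_{y\in K})+\sqrt{2u})^2\in\cdot]$, just as in \eqref{E:7.15}--\eqref{E:7.16}. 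But on $\{\psi_y^u<\varepsilon,\ y\in K\}$ one has $\varphi_y\to-\sqrt{2u}$ for every $y\in K$ as $\varepsilon\to0^+$, so (by continuity of the map in \eqref{E:40.40} and the mean value theorem applied to the relevant Gaussian marginals, which makes all $2^{|K|}$ mixture weights and branches collapse) the conditional law converges to $\mathbf{P}_{K'}^G[\frac12(\Phi_\cdot^K(-\sqrt{2u},\dots,-\sqrt{2u})+\sqrt{2u})^2\in\cdot]$; and by \eqref{E:40.40} and \eqref{E:7.13'}, $\Phi_x^K(-\sqrt{2u},\dots,-\sqrt{2u})+\sqrt{2u}=\Phi_x^K(0,\dots,0)+h_x^{K,K'}(u)$, which is the field on the right-hand side of \eqref{E:7.12'}.

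On the interlacement side, conditioning the Ray--Knight identity on $\{\bar\ell_{\sigma_u^{K'}}^{K',y}+\frac12\varphi_y^2<\varepsilon,\ y\in K\}$ and first conditioning on $(\varphi_y)_{y\in K}$ gives the analogue of \eqref{E:7.18}; since $\bar\ell_{\sigma_u^{K'}}^{K',y}$ and $\frac12\varphi_y^2$ are both nonnegative, forcing their sum to be small on $K$ forces $\varphi_y\to0$ and $\bar\ell_{\sigma_u^{K'}}^{K',y}\to0$ for every $y\in K$, so the double-limit squeeze via the functions $G_\delta$ of \eqref{E:7.19}--\eqref{E:7.22} (now continuous in $h$ ranging over a neighborhood of the origin in $\mathbb{R}^K$, and increasing in $\delta$) carries over unchanged and shows that this conditional law converges to $\overline{P}_{x_*}^{K'}[(\bar\ell_{\sigma_u^{K'}}^{K',\cdot}+\frac12\Phi_\cdot^K(0,\dots,0)^2)\in\cdot\mid\bar\ell_{\sigma_u^{K'}}^{K',y}=0,\ y\in K]$. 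By Lemma~\ref{L:7.1} (applied with $A=K'$; the case $K=K'$ being trivial), together with the identification of $\bar\ell_{\sigma_u^{K'}}^{K',\cdot}$, on the event that it vanishes on $K$, with the occupation field $(L_{x,u})_{x\in K'}$ of $\mathbb{P}^{K,K'}$ carried out at the end of the proof of Theorem~\ref{T:7.12}, this last conditional law is that of $(L_{x,u}+\frac12\Phi_x^K(0,\dots,0)^2)_{x\in K'}$ under $\mathbb{P}^{K,K'}\otimes\mathbf{P}_{K'}^G$.

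Comparing the two limits obtained in the last two paragraphs, both equal to $\lim_{\varepsilon\to0^+}\mathbf{P}_{K'}^G[\psi_\cdot^u\in\cdot\mid\psi_y^u<\varepsilon,\ y\in K]$, yields \eqref{E:7.12'}. The structural point enabling the pinning, as in Remark~\ref{R:7.29},1), is that the left-hand side of the Ray--Knight identity is a sum of two nonnegative fields, so forcing it to vanish on $K$ is the same as forcing each summand to vanish on $K$. The only genuinely new ingredient relative to Theorem~\ref{T:7.12} is the multivariate disintegration in the middle step, and I expect the collapse of the $2^{|K|}$ sign branches (and the accompanying multivariate form of the squeeze argument) to be the main, though still routine, obstacle.
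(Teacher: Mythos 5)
Your proposal is correct and takes exactly the route the paper intends: the paper omits the proof of this primed statement, saying only that it ``follows the lines'' of the proof of Theorem \ref{T:7.12} with the necessary modifications, and your sketch supplies precisely those modifications (Ray--Knight for $\overline{X}^{K'}_{\cdot}$, the collapse of the $2^{|K|}$ sign branches when conditioning on $\{\psi^u_y<\varepsilon,\ y\in K\}$, the multivariate $G_\delta$ squeeze, and Lemma \ref{L:7.1} applied with $A=K'$, with $K=K'$ handled separately). No gaps to flag.
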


We now return to the setup of Theorem \ref{T:7.12}, and aim to investigate the limit as~$N \to \infty$. If one keeps $u$ fixed in \eqref{E:7.12}, the resulting limiting statement will be an obvious equality in law. However, as already hinted at in \eqref{E:6.21}, we can expect something interesting to happen if we boost $u$ suitably. Recall the pinned Gaussian free field $\varphi^{\, p}$ from \eqref{E:7.9}, as well as the (pinned) infinite volume interlacement process, whose law is denoted by $\mathbb{P}^{0,\mathbb{Z}^2}$, cf. above \eqref{C:6.17}, along with its corresponding field of local times $(L_{x,u})_{x\in \mathbb{Z}^2}$, for $u > 0$. 

\begin{thm} (Pinned isomorphism theorem, infinite volume) 
\label{T:7.29}
\medskip
\noindent
For all $\alpha > 0$,  
\begin{equation}
\label{E:7.30}
\begin{split}
&\text{the law of } \Big(L_{x,\alpha} + \frac12 (\varphi^{\,p}_x)^{\,2}\Big)_{x\in \mathbb{Z}^2}, \text{ under $\mathbb{P}^{0,\mathbb{Z}^2} \otimes \mathbf{P}^G$,}\\
&\text{is equal to the law of } \Big( \frac12 \big(  \varphi^{\,p}_x + \sqrt{2\alpha} a(x) \big)^2 \Big)_{x\in \mathbb{Z}^2} \text{ under $\mathbf{P}^G$}.
\end{split}
\end{equation}
\end{thm}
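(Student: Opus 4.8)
The plan is to derive \eqref{E:7.30} as the $N\to\infty$ limit of the finite-volume isomorphism \eqref{E:7.12} of Theorem~\ref{T:7.12}, after tuning the intensity $u=u_N$ with $N$ in the way dictated by \eqref{E:6.2} and \eqref{E:6.21}. Concretely, I would fix $\alpha_0=\frac{2}{\pi}\alpha$, take any sequence $u_N=u_N(\alpha_0)$ as in \eqref{E:6.2} (so that $u_N\sim\frac{4}{\pi^2}\alpha\log^2 N$), and apply \eqref{E:7.12} with this $u_N$. For every $N$ this gives an equality in law between $\big(L_{x,u_N}+\frac12\widetilde\varphi_x^{\,2}\big)_{x\in B_N}$ under $\mathbb{P}^{0,N}\otimes\mathbf{P}_N^G$ and $\big(\frac12(\widetilde\varphi_x+h_x^N(u_N))^2\big)_{x\in B_N}$ under $\mathbf{P}_N^G$, and it then remains to pass both sides to the limit in the sense of finite-dimensional marginals.

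For the left-hand side I would invoke Lemma~\ref{L:6.20} with parameter $\alpha_0$: since $\frac{\pi}{2}\alpha_0=\alpha$, the field $(L_{x,u_N})_{x}$ under $\mathbb{P}^{0,N}$ converges in the finite-dimensional sense to $(L_{x,\alpha})_{x}$ under $\mathbb{P}^{0,\mathbb{Z}^2}$. In parallel, $\widetilde\varphi$ is for each $N$ the centered Gaussian field with covariance $g_{B_N\setminus\{0\}}$, see \eqref{E:7.11}; because $\widetilde\varphi_x=\varphi_x-P_x[H_0<T_{B_N}]\varphi_0$ differs from the increment $\varphi_x-\varphi_0$ by $P_x[H_0>T_{B_N}]\varphi_0$, whose variance is $P_x[H_0>T_{B_N}]^2 g_{B_N}(0,0)=O((\log N)^{-1})\to0$ by \eqref{E:6.16.0}, it follows from \eqref{E:2.17} that $\widetilde\varphi$ under $\mathbf{P}_N^G$ converges (finite-dimensional marginals, the fields being Gaussian) to $\varphi^{\,p}$ under $\mathbf{P}^G$. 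As $L_{\cdot,u_N}$ and $\widetilde\varphi$ are independent under the product measure, the two convergences combine into joint convergence of the pair, and the continuous mapping theorem yields that the left-hand side of \eqref{E:7.12} converges to $\big(L_{x,\alpha}+\frac12(\varphi^{\,p}_x)^{\,2}\big)_{x}$ under $\mathbb{P}^{0,\mathbb{Z}^2}\otimes\mathbf{P}^G$.

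For the right-hand side I would first check that the deterministic shift converges pointwise:
\begin{equation*}
h_x^N(u_N)=P_x[H_0>T_{B_N}]\,(2u_N)^{1/2}\longrightarrow \sqrt{2\alpha}\,a(x)\qquad(N\to\infty),
\end{equation*}
which follows from \eqref{E:7.13}, the asymptotics $P_x[H_0>T_{B_N}]\sim\frac{\pi}{2}a(x)/\log N$ of \eqref{E:6.16.0}, and $(2u_N)^{1/2}\sim\frac{2\sqrt2}{\pi}\sqrt{\alpha}\,\log N$. Combining this with $\widetilde\varphi\Rightarrow\varphi^{\,p}$ and a Slutsky-type argument, the right-hand side of \eqref{E:7.12} converges to $\big(\frac12(\varphi^{\,p}_x+\sqrt{2\alpha}\,a(x))^2\big)_{x}$ under $\mathbf{P}^G$. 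Since the two sides of \eqref{E:7.12} coincide in law for each $N$, so do their limits, which is exactly \eqref{E:7.30}.

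The argument is essentially soft once Theorem~\ref{T:7.12} and Lemma~\ref{L:6.20} are available; I expect the only genuinely delicate points to be (i) pinning down the scaling $u_N\sim\frac{4}{\pi^2}\alpha\log^2 N$, which is forced simultaneously by matching the limiting local-time intensity $\frac{\pi}{2}\alpha_0$ to $\alpha$ and by producing the shift $\sqrt{2\alpha}\,a(x)$ in the limit, and (ii) the bookkeeping needed to pass an equality in law to the limit, namely establishing joint finite-dimensional convergence of $(L_{\cdot,u_N},\widetilde\varphi)$ — this is where the independence on the product space and the Gaussian covariance convergence $g_{B_N\setminus\{0\}}(x,y)\to a(x)+a(y)-a(y-x)$ (equivalently \eqref{E:2.17}) are used.
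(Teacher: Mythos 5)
Your proposal is correct and follows essentially the same route as the paper: apply the finite-volume isomorphism of Theorem~\ref{T:7.12} with $u_N=\alpha(\tfrac{2}{\pi}\log N)^2$ (your $\alpha_0=\tfrac{2}{\pi}\alpha$ reparametrization of Lemma~\ref{L:6.20}), show $\widetilde\varphi\Rightarrow\varphi^{\,p}$ via the vanishing variance of $P_x[H_0>T_{B_N}]\varphi_0$ together with \eqref{E:2.17}, and identify the limit of the shift $h_x^N(u_N)\to\sqrt{2\alpha}\,a(x)$ using \eqref{E:6.16.0}. All the key ingredients and scalings match the paper's argument.
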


\begin{proof}
Since all the relevant quantities in \eqref{E:7.30} vanish when $x=0$, we may assume that $x \neq 0$. To begin with, we note that for all $N \geq 1$, $x \in B_N \setminus \{ 0\}$, since $P_0[\widetilde{H}_0 > T_{B_N}] = g_{B_N}(0,0)^{-1}$, which follows from \eqref{E:4.4} with $z=0$, $K=\{ 0\}$, 
\begin{equation}
\begin{split}
\label{E:7.31}
\mathbf{E}_N^G\Big[ \big(P_x[H_0>T_{B_N}]\varphi_0\big)^2\Big] &\stackrel{\eqref{E:7.9}}{=} P_x[H_0>T_{B_N}]^2 g_{B_N}(0,0)\\
&\stackrel{\eqref{E:4.4}}{=}\bigg[1 - \frac{g_{B_N}(x,0)}{g_{B_N}(0,0)}\bigg]^2 g_{B_N}(0,0) \stackrel{N \to \infty}{\longrightarrow} 0,
\end{split}
\end{equation}
using \eqref{E:4.2} and $g_{B_N}(0,0) \sim \frac2\pi \log N$ to compute the limit. By \eqref{E:7.10.0} and \eqref{E:7.10}, we have
\begin{equation*}
\widetilde{\varphi}_x = \varphi_x - P_x[H_0<T_{B_N}]\varphi_0 = \varphi_x - \varphi_0 +  P_x[H_0>T_{B_N}]\varphi_0,
\end{equation*}
hence, using \eqref{E:7.31} and Cauchy-Schwarz, and in view of \eqref{E:2.17}, it follows that $ \lim_N \mathbf{E}_N^G[\widetilde{\varphi}_x\widetilde{\varphi}_y]= \mathbf{E}^G[\varphi^{\,p}_x \varphi^{\,p}_y] $. Thus, by looking at characteristic functions, we obtain that
\begin{equation}
\label{E:7.32}
\widetilde{\varphi}_{\cdot} \text{ (under $\mathbf{P}_N^G$) converges in distribution towards } \varphi^{\,p}_{\cdot} \text{ (under $\mathbf{P}^G$),}
\end{equation} 
where convergence in distribution is meant in the sense of convergence of all finite-dimensional marginals. The claim \eqref{E:7.30} then follows from Theorem \ref{T:7.12} applied with $u=\alpha ( \frac{2}\pi \log N)^2$, by letting $N \to \infty$ and using \eqref{E:7.32}, Lemma \ref{L:6.20} and observing that
\begin{equation}
h_x^N\bigg(\alpha \Big( \frac{2}\pi \log N\Big)^2\bigg) \stackrel{\eqref{E:7.13}}{=} \sqrt{2\alpha} \, \bigg( \frac{2}\pi \log N P_x[H_0 > T_{B_N}] \bigg) \stackrel{N}{\longrightarrow} \sqrt{2\alpha} \, a(x),
\end{equation}
using \eqref{E:6.16.0} in the last step.
\end{proof}

As an immediate application of \eqref{E:7.30}, we note the following
\begin{corollary} $($Asymptotics for local times$)$

\medskip
\noindent One has the following limits in distribution regarding the field of local times $(L_{x,\alpha})_{x \in \mathbb{Z}^2}$ under $\mathbb{P}^{0,\mathbb{Z}^2}$: as $\alpha \to \infty$,
\begin{equation}
\label{E:7.40}
\Big(\frac{L_{x,\alpha}}{\alpha}\Big)_{x \in \mathbb{Z}^2} \to \big(a^2(x)\big)_{x \in \mathbb{Z}^2},\qquad \Big( \frac{L_{x,\alpha}-\alpha a^2(x)}{\sqrt{2\alpha}a(x)}\Big)_{x \in \mathbb{Z}^2} \to (\varphi_x^{\,p})_{x\in \mathbb{Z}^2}.
\end{equation}
\end{corollary}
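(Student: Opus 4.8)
The plan is to read off both limits directly from the pinned isomorphism \eqref{E:7.30}, the point being that after the natural rescaling the additive correction $\tfrac12(\varphi^{\,p})^2$ sitting on its left-hand side becomes negligible, so that the asymptotics of $L_{\cdot,\alpha}$ are governed entirely by the right-hand side, which is an explicit quadratic functional of a single Gaussian field. Everything is meant in the sense of finite-dimensional marginals, so I would fix points $x^{(1)},\dots,x^{(k)}\in\mathbb{Z}^2$; since $L_{0,\alpha}=a(0)=\varphi^{\,p}_0=0$ deterministically we may discard the point $0$ and assume all $x^{(j)}\neq 0$, so that $a(x^{(j)})>0$.

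First I would expand the right-hand side of \eqref{E:7.30}: for $x\neq 0$,
\[
\tfrac12\big(\varphi^{\,p}_x+\sqrt{2\alpha}\,a(x)\big)^2 = \alpha\, a^2(x) + \sqrt{2\alpha}\,a(x)\,\varphi^{\,p}_x + \tfrac12(\varphi^{\,p}_x)^2 .
\]
Applying the deterministic, coordinatewise affine map $v\mapsto\big((v_x-\alpha a^2(x))/(\sqrt{2\alpha}\,a(x))\big)_x$ to both sides of \eqref{E:7.30} (this preserves equality in law), one gets that
\[
\Big(\frac{L_{x,\alpha}-\alpha a^2(x)}{\sqrt{2\alpha}\,a(x)} + \frac{(\varphi^{\,p}_x)^2}{2\sqrt{2\alpha}\,a(x)}\Big)_{x}, \quad \text{with }\varphi^{\,p}\perp L_{\cdot,\alpha},
\]
has the same law as $\big(\varphi^{\,p}_x + (\varphi^{\,p}_x)^2/(2\sqrt{2\alpha}\,a(x))\big)_x$. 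As $\alpha\to\infty$ the latter converges almost surely, hence in distribution, to $(\varphi^{\,p}_x)_x$ because each $a(x)>0$ is fixed; so the former converges in distribution to $(\varphi^{\,p}_x)_x$ too. Next I would strip off the spurious term on the left: there $\varphi^{\,p}$ is a fixed, non-scaling field, so $\big((\varphi^{\,p}_x)^2/(2\sqrt{2\alpha}\,a(x))\big)_x\to 0$ almost surely, and Slutsky's theorem yields
\[
\Big(\frac{L_{x,\alpha}-\alpha a^2(x)}{\sqrt{2\alpha}\,a(x)}\Big)_x \stackrel{d}{\longrightarrow} (\varphi^{\,p}_x)_x \quad\text{as }\alpha\to\infty,
\]
which is the second limit in \eqref{E:7.40}. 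The first limit then follows from the second: writing
\[
\frac{L_{x,\alpha}}{\alpha} = a^2(x) + \sqrt{\tfrac2\alpha}\,a(x)\cdot\frac{L_{x,\alpha}-\alpha a^2(x)}{\sqrt{2\alpha}\,a(x)},
\]
the last factor is tight (it converges in law), so its product with $\sqrt{2/\alpha}\,a(x)\to 0$ tends to $0$ in probability, giving $\big(L_{x,\alpha}/\alpha\big)_x\to\big(a^2(x)\big)_x$.

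There is no real obstacle here, and indeed the corollary is essentially immediate once \eqref{E:7.30} is in hand; the only point requiring care is that \eqref{E:7.30} is an identity in law, not an almost-sure identity, so one cannot manipulate the same random variables on both sides. One argues at the level of distributions instead, using that affine images and Slutsky-type perturbations respect weak convergence, together with the elementary observation that the left-hand correction $\tfrac12(\varphi^{\,p})^2$ is of order $O(1)$ whereas the natural fluctuation scale of $L_{x,\alpha}$ around its mean $\alpha a^2(x)$ is $\sqrt{\alpha}$.
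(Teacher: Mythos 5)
Your proof is correct and follows essentially the same route as the paper: both limits are read off from the pinned isomorphism \eqref{E:7.30}, using that the correction $\tfrac12(\varphi^{\,p})^2$ is negligible at the scales $\alpha$ and $\sqrt{\alpha}\,a(x)$ respectively, with Slutsky-type arguments handling the fact that \eqref{E:7.30} is only an identity in law. The only cosmetic difference is that you establish the fluctuation limit first and deduce the law-of-large-numbers limit from it, while the paper treats the first limit directly and notes the second is analogous.
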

\begin{proof}
The first item in \eqref{E:7.40} follows readily from \eqref{E:7.30}, noting that for every $x \in \mathbb{Z}^2$, $(\varphi_x^{\,p})^2 /\alpha \to 0$ and $( \varphi^{\,p}_x + \sqrt{2\alpha} a(x))^2 / 2\alpha \to a^2(x)$, $\mathbf{P}^G$-a.s. as $\alpha \to \infty$. The second claim follows similarly. 
\end{proof}


\section{
 Limits of massive models}
\label{S:mass}

We now present a different approach to building the interlacements corresponding to \eqref{E:1.3}, which has the advantage of proceeding immediately in infinite volume, but uses a suitably tuned killing parameter $\epsilon$ for the random walks. Recalling from \eqref{E:10.0} that $\xi(\epsilon)$ is an exponential random variable of parameter $\epsilon$ under $P_x$, independent of the process $(X_t)_{t\geq 0}$, we define by $P_{\epsilon,x}$ the canonical law of 
\begin{equation}
\label{E:10.1}
Y_t = X_{t\wedge \xi(\epsilon)},  \quad t\geq0.
\end{equation}
 By adding a cemetery state $x_*$ not in $\mathbb{Z}^2$ as in \eqref{E:5.0} and redefining $Y_t = x_*$, $t \geq \xi$, $P_{\epsilon,x}$ is canonically viewed as a probability measure on $W$, cf. above \eqref{E:5.1}. Note in particular that $Y_{\cdot}$ is transient under $P_{\epsilon,x}$ for any $\epsilon > 0$. Its Green function is precisely $g_{\epsilon}(\cdot,\cdot)$, as defined in \eqref{eq:GF0}, i.e. 
\begin{equation}
\label{eq:GF}
\begin{split}
g_{\epsilon}(x,y) &= E_{\epsilon,x}\Big[\int_0^{\infty} \text d t \, 1\{ Y_t =y\} \Big], \qquad x, y \in \mathbb{Z}^2.
\end{split}
\end{equation}
A straightforward calculation shows that $
 g_{\epsilon}(x,y) = \sum_{n \geq 0} P_x[Z_n=y](1+\epsilon)^{-n}$
(recall that $Z_{\cdot}$ refers to the discrete skeleton of $X_{\cdot}$ under $P_x$). Hence, one may regard $Y_{\cdot}$ as a (continuous time, unit speed) Markov chain on the transient weighted graph $\mathbb{Z}^2 \cup\{ x_*\}$ endowed with the conductances $c_{x,y} =1/2d(1+\epsilon)$, for $|x-y|=1$, $x,y \in \mathbb{Z}^2$, $c_{x,x_*}= \epsilon/(1+\epsilon)$ and $c_{x_*,x_*}=1$. Hence, following \cite{Tei09}, we write 
\begin{equation}
\label{E:10.P}
\begin{split}
&\text{$\mathbb{P}_{\epsilon}$, $\epsilon > 0$, for the canonical law on $\Omega$, see \eqref{E:5.10}, of the Poisson}\\
&\text{point process with intensity measure $ \nu_{\epsilon}^*(\text dw^*) \text du$ (on $W^*\times [0,\infty))$,} 
\end{split}
\end{equation}
where $\nu_{\epsilon}^*$ is defined by $\nu_{\epsilon}^* \restriction{W_A^*} = Q_{\epsilon,A} \circ (\pi^*)^{-1}$, for $A\subset \subset \mathbb{Z}^d$, and $Q_{\epsilon,A}$ is a measure supported on the set of bi-infinite trajectories entering $A$ at time $0$ with
\begin{equation}
\label{E:10.2}
\begin{split}
&Q_{\epsilon,A} [(X_{-t})_{t\geq 0} \in E^-, X_0 = x, (X_t)_{t\geq 0} \in E^+] \\
&\qquad= P_{\epsilon,x} [(Y_t)_{t\geq 0} \in E^-  \,| \, \widetilde{H}_A = \infty ] \cdot e_{\epsilon,A}(x) \cdot P_{\epsilon,x} [(Y_t)_{t\geq 0} \in E^+  ],
\end{split}
\end{equation}
for $E^{\pm} \in \mathcal{W}_+$, with $Y_{\cdot}$ as in \eqref{E:10.1} and the corresponding equilibrium measure
\begin{equation}
\label{E:10.3}
e_{\epsilon,A}(x) = P_{\epsilon,x}[\widetilde{H}_A = \infty].
\end{equation}
We first determine the right scaling for $\epsilon$ in terms of $N$ as appearing in \eqref{E:1.3}, by giving a representation of the harmonic measure of $A$ in terms of the equilibrium measure $e_{\epsilon,A}$ in \eqref{E:10.3}. The correct choice is naturally governed by the relevant time scale $t_N$ given by \eqref{E:1.4.0}. One could also choose to eliminate $N$ altogether and scale $u$ suitably with the mass $\epsilon$ upon letting $\epsilon \to 0$ but this would obscure the link to the actual random walk in \eqref{E:1.3}.
\begin{proposition}
\label{P:10.4}
 For $A \subset \subset \mathbb{Z}^2$ containing the origin, and any sequence $(\epsilon_N)_N$  with $\epsilon_N \in (0,\infty)$ for all $N$, satisfying
 \begin{equation}
 \label{E:10.4}
\epsilon_N \sim t_N^{-1}, \quad \text{ as }N\to \infty,
\end{equation}
one has
 \begin{equation}
\label{E:10.5}
\sup_{x \in A} \bigg| \frac{2\log N}\pi  e_{\epsilon_N,A}(x)- \textnormal{hm}_A(x) \bigg| =o_A(1), \quad \text{ as }N\to \infty.
 \end{equation}
\end{proposition}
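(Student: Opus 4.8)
The plan is to establish \eqref{E:10.5} by connecting the massive equilibrium measure $e_{\epsilon_N,A}(x)=P_{\epsilon_N,x}[\widetilde H_A=\infty]$ to the Dirichlet-type escape probability $P_x[\widetilde H_A > T_{B_N}]$ that appears in the definition \eqref{E:2.14} of $e_{A,B_N}$, and then to invoke \eqref{E:2.11}. First I would rewrite, using the independence of $\xi(\epsilon_N)$ from $X_{\cdot}$,
\begin{equation}
\label{E:10.plan1}
e_{\epsilon_N,A}(x)=P_{\epsilon_N,x}[\widetilde H_A=\infty]=E_x\big[e^{-\epsilon_N \widetilde H_A}\,1\{\widetilde H_A<\infty\}\big]^c\ \text{-type identity},
\end{equation}
more precisely $e_{\epsilon_N,A}(x)=1-E_x[e^{-\epsilon_N\widetilde H_A}]$ for $x\in A$ (the walk being recurrent, $\widetilde H_A<\infty$ $P_x$-a.s.). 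The strategy is then to compare this with $P_x[\widetilde H_A>T_{B_N}]$: on one hand, by conditioning on whether the walk exits $B_N$ before returning to $A$, one has $E_x[e^{-\epsilon_N\widetilde H_A}]\le P_x[\widetilde H_A\le T_{B_N}] + E_x[e^{-\epsilon_N T_{B_N}}\,1\{T_{B_N}<\widetilde H_A\}]$, giving a lower bound $e_{\epsilon_N,A}(x)\ge P_x[\widetilde H_A>T_{B_N}] - E_x[1-e^{-\epsilon_N T_{B_N}};\,T_{B_N}<\widetilde H_A]$; on the other hand, a matching upper bound comes from the fact that on $\{T_{B_N}<\widetilde H_A\}$ the walk has a further chance of being killed before returning to $A$, so $E_x[e^{-\epsilon_N\widetilde H_A}]\ge P_x[\widetilde H_A\le T_{B_N}]\cdot\inf(\dots)$, or more cleanly by a renewal/strong Markov decomposition at successive returns to the $B_N$-boundary.

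The key quantitative input is controlling the ``killing before leaving $B_N$'' correction. The relevant scale is $\epsilon_N\sim t_N^{-1}=\tfrac{\pi}{2}N^{-2}\log^{-2}N$, while by \eqref{E:2.exp3} the walk's excursions inside $B_N$ have expected length $O(N^2)$, so a single excursion is killed with probability of order $\epsilon_N N^2\sim \log^{-2}N$. Meanwhile $P_x[\widetilde H_A>T_{B_N}]$ is itself of order $(\log N)^{-1}$ by \eqref{E:2.11}. One expects the walk to make of order $\log N$ such excursions before returning to $A$ (this is the content of \eqref{E:2.11}: the escape probability is $\asymp 1/\log N$), so the total killing probability accumulated while ``trying to escape $A$'' is of order $(\log N)\cdot \epsilon_N N^2\asymp (\log N)^{-1}$, i.e.\ of the same order as $e_{A,B_N}(x)$ itself — hence naively this correction is NOT negligible and the proposition would be false. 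The resolution, and the part I expect to be the crux, is that the correct comparison is not with $T_{B_N}$ for fixed $N$ but involves a more careful two-scale estimate: one should introduce an intermediate radius $M$ with $\log M=o(\log N)$ but $M\to\infty$, use \eqref{E:2.11} at scale $M$ to pin down $\mathrm{hm}_A$, and show that the contribution to $E_x[e^{-\epsilon_N\widetilde H_A}]$ from excursions reaching past $B_M$ is $o((\log N)^{-1})$ because such excursions are killed with overwhelming probability on the time scale $\epsilon_N^{-1}$. Quantitatively: an excursion from $\partial B_M$ that returns to $A$ without being killed must survive a time comparable to $N^2$ (the scale at which it can diffuse back), which by Lemma \ref{L:2.exp} (applied at radius $R\asymp N$) costs a factor $\le 1-c\,\epsilon_N N^2$, and iterating over the $\asymp \log N$ such long excursions kills all but a vanishing fraction.

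Concretely, the steps I would carry out are: (1) write $e_{\epsilon_N,A}(x)=1-E_x[e^{-\epsilon_N\widetilde H_A}]$ and decompose $\widetilde H_A$ along the successive visits to $\partial B_M$, grouping the walk into: the initial excursion staying in $B_M$ until either returning to $A$ or hitting $\partial B_M$, and then a geometric-type number of ``long'' excursions from $\partial B_M$; (2) show that $\tfrac{2\log N}{\pi}P_x[\widetilde H_A>T_{B_M}]\to\mathrm{hm}_A(x)$ fails — rather, one uses that the probability of returning to $A$ from $\partial B_M$ before hitting $\partial B_N$ is $\approx \log(N/M)/\log N\to 1$, together with \eqref{E:4.7}/\eqref{E:2.3.0} to get the precise logarithmic asymptotics; (3) feed in the killing: each sojourn that reaches $\partial B_N$-scale before returning survives with probability $\le 1-c\epsilon_N N^2$ by Lemma \ref{L:2.exp}, so the chance of returning to $A$ \emph{unkilled} after $k$ long excursions decays geometrically with rate $1-c\epsilon_N N^2$ over the $\asymp\log N$-many attempts, making $E_x[e^{-\epsilon_N\widetilde H_A};\,\text{reaches }\partial B_N]$ of order $\log^{-1}N \cdot (1-c\epsilon_N N^2)^{c'\log N}=o(\log^{-1}N)$ once one checks $\epsilon_N N^2\log N\to\infty$, which indeed holds since $\epsilon_N N^2\log N\sim\tfrac\pi2\log^{-1}N\cdot\log N\to$ a constant — so one actually needs the sharper statement that the number of long excursions is not $O(1)$ but genuinely grows, which follows from \eqref{E:2.11}; (4) assemble the two-sided bounds, use \eqref{E:2.3.0} for the $(\log N)$-asymptotics, and conclude uniformly over the finite set $A$ (uniformity being automatic since $A$ is fixed and finite, so the $o_A(1)$ is just the max over finitely many $x$). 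The main obstacle is step (3): making rigorous that the massive killing at rate $\epsilon_N$ precisely cuts off the ``far'' excursions — the ones responsible for the difference between the Dirichlet-$B_N$ picture and the harmonic-measure-at-infinity picture — at exactly the right logarithmic order, so that the leading $\tfrac{2\log N}{\pi}$-normalized term matches $\mathrm{hm}_A(x)$ and the remainder is $o(1)$. This likely requires a careful comparison of $E_x[e^{-\epsilon_N\widetilde H_A}]$ with $E_x[e^{-\epsilon_N T_{B_N}}]$-type quantities, for which the exponential-moment bound of Lemma \ref{L:2.exp} and the Green's function asymptotics $g_{B_N}(0,0)\sim\tfrac2\pi\log N$ are the right tools.
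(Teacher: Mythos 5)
There is a genuine gap, and it sits exactly where you predicted the crux would be, in your step (3). The mechanism you propose there — each sojourn reaching scale $N$ is killed with probability $\asymp \epsilon_N N^2$, and this accumulates geometrically over the $\asymp \log N$ long excursions — is quantitatively insufficient: by \eqref{E:10.4} and \eqref{E:1.4.0} one has $\epsilon_N N^2 \sim \tfrac{\pi}{2}\log^{-2}N$, so $(1-c\,\epsilon_N N^2)^{c'\log N} \to 1$, not $0$; your own consistency check fails, since $\epsilon_N N^2 \log N \sim \tfrac{\pi}{2}\log^{-1}N \to 0$ (your claim that it tends to a constant rests on the arithmetic slip $\epsilon_N N^2\sim \tfrac\pi2 \log^{-1}N$), and the vague appeal to ``the number of long excursions genuinely grows'' does not rescue this, because growth of order $\log N$ is precisely what was already counted and it is not enough. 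Relatedly, your intermediate radius is placed on the wrong side: with $\log M = o(\log N)$, an excursion past $\partial B_{M}$ is \emph{not} killed with overwhelming probability — it typically returns to $A$ within time $O(N^2) \ll \epsilon_N^{-1}$ unscathed (were your claim true, one would get $e_{\epsilon_N,A}(x)\asymp 1/\log M \gg 1/\log N$, contradicting the statement). The cutoff must be taken \emph{beyond} scale $N$: the reason mass killing at rate $t_N^{-1}$ mimics Dirichlet killing at radius $N$ is that a trajectory which escapes to radius $N$ can only return to the finite set $A$ unkilled on an event of probability $o(1)$, and seeing this requires either working at a spatial scale $N^{1+\delta}$ or exploiting the heavy tail of two-dimensional return times; it cannot be extracted from a per-excursion killing cost of order $\log^{-2}N$ at scale $N$. (Your preliminary worry that the proposition is ``naively false'' is also a miscount: $e_{\epsilon_N,A}(x)=P_x[\widetilde H_A>\xi(\epsilon_N)]$ concerns a \emph{single} excursion from $A$, so no factor $\log N$ accumulates, and the ``early killing'' error is $O(\epsilon_N N^2)=O(\log^{-2}N)$, harmless.)

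For comparison, the paper's proof localizes the killing time in a spatial window around radius $N$: it shows that the event $G_N^{\delta}$ in \eqref{E:10.6}, namely $H_{\partial B_{N^{1-\delta}}}<\xi(\epsilon_N)<H_{\partial B_{N^{1+\delta}}}$, has probability $1-o_{A,\delta}((\log N)^{-1})$. The lower side is the crude bound you also have ($cN^{-2\delta}+\epsilon_N N^2 = O(\log^{-2}N)$). The upper side is where the extra $N^{\delta}$ matters: before exiting $B_{N^{1+\delta}}$ the walk must cross at least $t_N\log N$ boxes of radius $R$ (Lemma \ref{L:10.16}, a diffusive estimate), and each crossing survives the exponential clock with probability at most $1-c\epsilon_N R^2$ by Lemma \ref{L:2.exp} together with the renewal identity \eqref{E:10.15}, so the survival probability is at most $(1-c\epsilon_N R^2)^{t_N\log N}\approx e^{-cR^2\log N}$, polynomially small — note the exponent is $t_N\log N \asymp N^2\log^3 N$, vastly larger than the $\asymp\log N$ excursions available at scale $N$, which is exactly what your version lacks. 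Given $G_N^{\delta}$, the quantity $e_{\epsilon_N,A}(x)=P_x[\widetilde H_A>\xi(\epsilon_N)]$ is sandwiched as in \eqref{E:10.8} between the Dirichlet escape probabilities at radii $N^{1\pm\delta}$ up to $o((\log N)^{-1})$, and \eqref{E:2.11} plus $\delta\searrow 0$ concludes. Your plan could be repaired by moving the auxiliary scale from $M\ll N$ to $N^{1+\delta}$ and replacing the per-excursion accounting at scale $N$ by a crossing-count (or long-time) estimate at that larger scale, but as written the decisive estimate is false.
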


\begin{proof}
Consider a fixed sequence $(\epsilon_N)_N$ satisfying satisfying \eqref{E:10.4} and, for $\delta \in (0,1)$, recalling our convention regarding $\xi(\epsilon_N)$ below \eqref{E:10.0}, let 
\begin{equation}
\label{E:10.6}
G_N^{\delta} = \{ H_{\partial B_{N^{1-\delta}}} < \xi(\epsilon_N) <  H_{\partial B_{N^{1+\delta}}}\}.
\end{equation}
 We will show that $G_N^{\delta}$ happens with high probability, in that
\begin{equation}
\label{E:10.7}
P_x[ G_N^{\delta}] = 1- o_{A,\delta}\big( (\log N )^{-1} \big), \text{ as $N \to \infty$, for all $x \in A$}.
\end{equation}
Indeed, if \eqref{E:10.7} holds, then in view of \eqref{E:10.3}, noting that $P_{\epsilon,x}[\widetilde{H}_A = \infty] = P_{x}[\widetilde{H}_A > \xi(\epsilon)]$, cf. \eqref{E:10.1}, and by definition of $G_N^{\delta} $ in \eqref{E:10.6}, one infers, applying \eqref{E:10.7} twice, that for all $x \in A$,
\begin{equation}
\label{E:10.8}
\begin{split}
&\frac{2\log N}\pi P_{x}[\widetilde{H}_A > H_{\partial B_{N^{1+\delta}}}] + o_{A,\delta}(1) \\
&\qquad \qquad \leq \frac{2\log N}\pi  e_{\epsilon_N,A}(x) \leq \frac{2\log N}\pi P_{x}[\widetilde{H}_A > H_{\partial B_{N^{1-\delta}}}] + o_{A,\delta}(1).
\end{split}
\end{equation}
Taking $N \to \infty$ for fixed $\delta>0$ in \eqref{E:10.8} and recalling \eqref{E:2.11} gives
$$
\frac{ \textnormal{hm}_A(x)}{1+ \delta} \leq \liminf_N\frac{2\log N}\pi  e_{\epsilon_N,A}(x) \leq  \limsup_N\frac{2\log N}\pi  e_{\epsilon_N,A}(x) \leq \frac{ \textnormal{hm}_A(x)}{1- \delta}, 
$$
for all $x\in A$, from which \eqref{E:10.5} follows upon letting $\delta \searrow 0$.

We now show \eqref{E:10.7}, and to this end, first note that, for all $x \in A$,
\begin{equation}
\label{E:10.9}
P_x[ H_{\partial B_{N^{1-\delta}}} \geq \xi(\epsilon_N)] \leq P_x[ H_{\partial B_{N^{1-\delta}}} \geq N^2 ] + P_x[  \xi(\epsilon_N) <N^2].
\end{equation}
Using the (crude) estimate $E_x[H_{ B_{N^{1-\delta}}^c}] \leq cN^{2(1-\delta)}$ valid for all $x \in B_{N^{1-\delta}}$ and $\delta \geq 0$, see \eqref{E:2.exp3}, a first moment bound yields that $P_x[ H_{\partial B_{N^{1-\delta}}} \geq N^2 ] \leq c N^{-2\delta}$. Using the elementary inequality $e^{-x} \geq 1-x$, for $x> 0$, we also have that $P_x[  \xi(\epsilon_N) <N^2] = 1 -e^{-\epsilon_N N^2}\leq \epsilon_N N^2$, hence
\begin{equation}
\label{E:10.10}
P_x[ H_{\partial B_{N^{1-\delta}}} \geq  \xi(\epsilon_N)] \leq c N^{-2\delta} \wedge \epsilon_N N^2 = O_{A,\delta} \big( (\log N )^{-2} \big),
\end{equation}
on account of \eqref{E:10.4} and \eqref{E:1.4.0}. We now derive a suitable upper bound for $P_x[ H_{\partial B_{N^{1+\delta}}} \leq  \xi(\epsilon_N)]$, $x \in A$. 
For a parameter $R \geq 1$ to be chosen soon, define the successive stopping times
\begin{equation}
\label{E:10.12}
H_1= H_{\partial B_R(X_0)}, \quad H_{k}= H_{k-1} + H_1 \circ \theta_{H_{k-1}}, \, k \geq 2.
\end{equation}
For convenience, let $\widetilde{\xi}\stackrel{\text{law}}{=}\xi (=\xi(\epsilon_N))$, be a copy of $\xi$  under $\widetilde{P}$ independent of $\xi$ and $\{ X_t: t \geq 0 \}$. Then, with $\mathcal{F}_{H_{k-1}}$ denoting the $\sigma$-algebra of the past of $H_{k-1}$, using first the independence of $\xi$ and $\{ X_t: t \geq 0 \}$, then the memoryless property of the exponential, and the strong Markov property at time $H_{k-1}$, we find, for any $x\in \mathbb{Z}^2$, $P_x$-a.s. (noting that $ H_{k-1} < \infty$, $P_x$-a.s.) 
\begin{equation}
\label{E:10.13}
\begin{split}
&E_x\big[ 1\{ \xi \geq H_{k} \} \, \big| \, \mathcal{F}_{H_{k-1}} \big]  = E_x\big[ \, \widetilde{P}[\widetilde \xi \geq H_{k-1} + H_1 \circ \theta_{H_{k-1}} ] \, \big| \mathcal{F}_{H_{k-1}} \big]\\
&\qquad = E_x\big[ \, \widetilde{P}[\widetilde \xi \geq  H_1 \circ \theta_{H_{k-1}} ]\cdot \widetilde{P}[\widetilde \xi \geq H_{k-1} ] \, \big| \mathcal{F}_{H_{k-1}} \big] \\
&\qquad = E_x\big[ 1\{ \xi \geq H_1 \circ \theta_{H_{k-1}} \} \, \big| \, \mathcal{F}_{H_{k-1}} \big] \cdot \widetilde{P}[\widetilde \xi \geq H_{k-1} ]\\
&\qquad = P_{X_{H_{k-1}}}[\xi \geq H_1]\cdot \widetilde{P}[\widetilde \xi \geq H_{k-1} ]
\end{split}
\end{equation}
where we also used that $\widetilde{P}[\widetilde \xi \geq H_{k-1} ]$ is $\mathcal{F}_{H_{k-1}}$-measurable. Applying \eqref{E:10.13} inductively, and since $ x\mapsto P_{x}[\xi \geq H_1]$ is stationary under spatial shifts, cf. \eqref{E:10.12}, it follows that 
\begin{equation}
\label{E:10.14}
P_x[  \xi \geq H_{k} ] = E_x\big[ E_x\big[ 1\{ \xi \geq H_{k} \} \, \big| \, \mathcal{F}_{H_{k-1}} \big] \big] = P_{0}[\xi \geq H_1]^k, \text{ for all $k \geq 1$, $x\in \mathbb{Z}^2$}.
\end{equation}
(Alternatively, one can also deduce \eqref{E:10.14} by considering $Y_{\cdot}$ defined in \eqref{E:10.1} on an extended graph, cf. the discussion below \eqref{eq:GF}, thus identifying $\xi=H_{\{x_*\}}$ and using the strong Markov property). Now, integrating over $ \xi =\xi(\epsilon_N)$
on the right-hand side of \eqref{E:10.14} and using \eqref{E:2.exp1} yields
\begin{equation}
\label{E:10.15}
P_x[  \xi(\epsilon_N) \geq H_{k} ] = E_0\big[  e^{-\epsilon_N H_1}\big]^k \leq (1-c\epsilon_NR^2)^k, \quad  \text{ for $k \geq 1$, $x\in \mathbb{Z}^2$, and $N \geq c'(R)$},
\end{equation}
such that $\epsilon_N \leq \epsilon_0(R)$ for all $N \geq c'(R)$ with $\epsilon_0$ as given in Lemma \ref{L:2.exp} (note that $\epsilon_N \to 0$ as $N \to \infty$ by \eqref{E:10.4}). 

The typical number $k$ of $R$-boxes crossed before exiting $B_{N^{1+\delta}}$ scales diffusively, cf. \eqref{E:10.12}. This yields the following

\begin{lem} $( A \subset \subset \mathbb{Z}^2, \, R\geq 1, \delta\in (0,\frac1{10}))$
\label{L:10.16}

\medskip
\noindent
Let $\kappa_N^\delta= \sup\{ k \geq 1: \, H_k < H_{\partial B_N^{1+ \delta}}  \}$. Then, for suitable $c,c'\in (0,\infty)$ depending on $A, R, \delta$ only, and all $N \geq 1$,
\begin{equation}
\label{E:10.16}
 P_x[\kappa_N^\delta < t_N \log N ] \leq ce^{-N^{c'}}, \quad x \in A.
\end{equation}
\end{lem}
We defer the proof of Lemma \ref{L:10.16} for a few lines. Assuming \eqref{E:10.16} to hold, and with $k_N = t_N \log N $, we thus obtain, for $x \in A$,
\begin{equation}
\label{E:10.17}
\begin{split}
&P_x[ H_{\partial B_{N^{1+\delta}}} \leq  \xi(\epsilon_N)] \leq P_x[ H_{\kappa_N^\delta} \leq  \xi(\epsilon_N)] \\
&\qquad \stackrel{\eqref{E:10.12}}{\leq} P_x[ H_{k_N} \leq  \xi(\epsilon_N)] + P_x[\kappa_N^\delta < t_N \log N ]\\
&\qquad \stackrel{\eqref{E:10.15}}{\leq} \big(1-c't_N^{-1}R^2\big)^{k_N} + ce^{-N^{c'}},
\end{split}
\end{equation}
where we also used \eqref{E:10.4}. Thus, choosing for instance $R=100$, \eqref{E:10.17} implies that $P_x[ H_{\partial B_{N^{1+\delta}}}  \leq  \xi(\epsilon_N)]$ decays (at least) polynomially in $N$ (with constants depending on $A$ and $\delta$). Together with \eqref{E:10.10}, this yields \eqref{E:10.7}, cf. also \eqref{E:10.6}, and thus completes the proof of \eqref{E:10.5}.
\end{proof}

It remains to give the

\begin{proof10}
By projecting onto each of the coordinates of $X_{\cdot}$ and observing that, under $P_0$, in order to exit $B_{N^{1+\delta}}$, the random walk must exit at least $\lfloor N^{1+\delta}/R \rfloor$ boxes among $\{ B_{R}(X_{H_k}), \, k \geq 1\}$ either horizontally in the same direction (i.e. all through the left or all through the right), or vertically in the same direction, and using the strong Markov property, one finds that
\begin{equation}
\label{E:10.18}
P_x[\kappa_N^\delta < t_N \log N ] \leq 2 \overline{P} \bigg[\, \bigg| \sum_{1 \leq k \leq  t_N \log N} \overline{Z}_k\, \bigg| \geq  \lfloor N^{1+\delta}/R \rfloor\bigg],
\end{equation}
where $\overline{Z}_k$, $k \geq 1$ are independent and identically distributed under $\overline{P}$, and equal to $\pm 1$ with probability $\frac14$ each, and otherwise equal to $0$. The claim \eqref{E:10.16} then quickly follows from \eqref{E:10.18} standard concentration estimates for sums of (bounded) independent random variables, see for instance \cite{LL10}, Corollary 12.2.7.\hfill $\square$
\end{proof10}



We return to the Poisson random measure $\omega$ with law $\mathbb P_{\epsilon}$ defined in \eqref{E:10.P}, and are now ready to state the main result of this section. Given $u>0$, we write $\mathbb P_{\epsilon,u}$ for the law of the random measure $\omega_u$ obtained by collecting all trajectories in $\omega$ with label at most $u$ - its intensity measure (on $W^*$) is $u\, \nu_{\epsilon}(\text d w^*)$. As in \eqref{E:5.13}, we write $\mathcal{I} = \mathcal{I}(\omega_u) \subset \mathbb{Z}^2$ for the random set consisting of all sites which are visited by at least one of the trajectories in the support of $\omega_u$. Recall the measure $Q^{\alpha}$, $\alpha >0$ from \eqref{E:1.5}. In referring to the law of $\mathcal{I}$ below, we mean the law of $(1\{ x\in \mathcal{I} \})_{x}$ (on $(\widetilde{\Omega},\widetilde{\mathcal F})$).

\begin{thm} $(\alpha > 0)$
\label{T:10.40}

\medskip
\noindent For all $(\epsilon_N)_N$ as in \eqref{E:10.4} and $u_N = u_N(\alpha)$ satisfying \eqref{E:6.2},
\begin{equation}
\label{E:10.40}
\text{the law of $\mathcal{I}$ under $\mathbb{P}_{\epsilon_N,u_N}[\, \cdot \, | 0\notin \mathcal{I}]$ converges in distribution to $Q^{\alpha}$}.
\end{equation}
\end{thm}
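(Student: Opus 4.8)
The plan is to follow the proof of Proposition~\ref{T:6.1}, with the capacity representation of Lemma~\ref{L:4.cap} replaced by a ``massive'' analogue. Since both $Q^\alpha$ and $\mathbb{P}_{\epsilon_N,u_N}[\,\cdot\mid 0\notin\mathcal I\,]$ are carried by configurations that vanish at $0$ (note $\mathrm{cap}(\{0\})=0$), inclusion--exclusion reduces \eqref{E:10.40} to showing that $\mathbb{P}_{\epsilon_N,u_N}[\mathcal I\cap A=\emptyset\mid 0\notin\mathcal I]\to e^{-\frac\pi2\alpha\,\mathrm{cap}(A)}$ for every finite $A\ni 0$. As $\omega_u$ is a Poisson measure on $W^*\times[0,\infty)$ with intensity $u\,\nu^*_\epsilon(\mathrm d w^*)$ and $W^*_{\{0\}}\subset W^*_A$ (because $0\in A$), the events $\{\mathcal I\cap A=\emptyset\}$ and $\{0\notin\mathcal I\}$ are governed by independent Poisson counts with $\{\mathcal I\cap A=\emptyset\}\subset\{0\notin\mathcal I\}$, so that
\begin{equation*}
\mathbb{P}_{\epsilon_N,u_N}[\mathcal I\cap A=\emptyset\mid 0\notin\mathcal I]=\exp\big\{-u_N\big(\nu^*_{\epsilon_N}(W^*_A)-\nu^*_{\epsilon_N}(W^*_{\{0\}})\big)\big\};
\end{equation*}
and by \eqref{E:10.P}, \eqref{E:10.2} and \eqref{E:10.3}, $\nu^*_\epsilon(W^*_A)=\sum_{x\in A}e_{\epsilon,A}(x)$ while $\nu^*_\epsilon(W^*_{\{0\}})=e_{\epsilon,\{0\}}(0)$. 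Thus the whole statement reduces to the asymptotic identity $u_N\big(\sum_{x\in A}e_{\epsilon_N,A}(x)-e_{\epsilon_N,\{0\}}(0)\big)\to\frac\pi2\alpha\,\mathrm{cap}(A)$.

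The first ingredient is the exact massive counterpart of \eqref{E:4.6}: for every $\epsilon>0$ and finite $A\ni 0$, $\sum_{x\in A}e_{\epsilon,A}(x)-e_{\epsilon,\{0\}}(0)=\sum_{x\in A}e_{\epsilon,A}(x)\,P_{\epsilon,x}[H_0=\infty]$. I would obtain this from the sweeping identity for the transient walk $Y_\cdot$ (the analogue of \eqref{E:2.15}, proved by the strong Markov property exactly as there), which yields $e_{\epsilon,\{0\}}(0)=P_{\epsilon,e_{\epsilon,A}}[H_0<\infty]=\sum_{x\in A}e_{\epsilon,A}(x)P_{\epsilon,x}[H_0<\infty]$, and subtracting from $\sum_x e_{\epsilon,A}(x)$. (Equivalently, one may use the last-exit decomposition $P_{\epsilon,z}[H_K<\infty]=\sum_{x\in K}g_\epsilon(z,x)e_{\epsilon,K}(x)$ with $z=0$ together with the symmetry and translation invariance of $g_\epsilon$, exactly as in \eqref{E:4.5}.)

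It remains to evaluate the right-hand side. By Proposition~\ref{P:10.4}, $e_{\epsilon_N,A}(x)=\frac{\pi}{2\log N}\big(\mathrm{hm}_A(x)+o(1)\big)$ uniformly over the finite set $A$. For $x\neq 0$, $P_{\epsilon,x}[H_0=\infty]=\big(g_\epsilon(0,0)-g_\epsilon(0,x)\big)/g_\epsilon(0,0)$; the numerator tends to $a(x)$ by Lemma~\ref{L:10.20}, while $q_t(0)\sim(\pi t)^{-1}$ (local CLT, cf.~\eqref{E:2.3.8}) gives $g_\epsilon(0,0)=\int_0^\infty e^{-\epsilon t}q_t(0)\,\mathrm dt=\tfrac1\pi\log(1/\epsilon)+O(1)$, and since $\log(1/\epsilon_N)\sim\log t_N\sim 2\log N$ we get $g_{\epsilon_N}(0,0)\sim\tfrac2\pi\log N$, hence $P_{\epsilon_N,x}[H_0=\infty]=\frac{\pi}{2\log N}\big(a(x)+o(1)\big)$. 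Plugging these into the identity above, using $u_N\sim\tfrac2\pi\alpha\log^2 N$ and \eqref{E:2.12} with $y=0$,
\begin{equation*}
u_N\Big(\sum_{x\in A}e_{\epsilon_N,A}(x)-e_{\epsilon_N,\{0\}}(0)\Big)\longrightarrow\frac2\pi\alpha\cdot\frac{\pi^2}{4}\sum_{x\in A}\mathrm{hm}_A(x)\,a(x)=\frac\pi2\alpha\,\mathrm{cap}(A),
\end{equation*}
which finishes the proof.

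I expect the main obstacle to be the on-diagonal massive Green function asymptotics $g_{\epsilon_N}(0,0)\sim\tfrac2\pi\log N$, which forces one to pin down the precise constant in $q_t(0)\sim(\pi t)^{-1}$. A cleaner route avoiding this is to apply optional stopping to the bounded martingale $\big(a(X_{t\wedge H_0})\big)_{t\geq0}$ (for $x\neq 0$) at the time $\xi(\epsilon_N)\wedge H_0$, giving $E_x\big[a(X_{\xi(\epsilon_N)})1\{H_0>\xi(\epsilon_N)\}\big]=a(x)$, and then to invoke the estimate on the event $G_N^\delta$ from the proof of Proposition~\ref{P:10.4}, which forces $|X_{\xi(\epsilon_N)}|\in[N^{1-\delta},N^{1+\delta}]$ off an event of probability $o((\log N)^{-1})$, so that $a(X_{\xi(\epsilon_N)})=\tfrac2\pi\log N\,(1+o(1))$ there, whence $P_{\epsilon_N,x}[H_0=\infty]\sim\tfrac{\pi a(x)}{2\log N}$; the only point to verify is that the rare event contributes negligibly to $E_x[a(X_{\xi(\epsilon_N)})1\{H_0>\xi(\epsilon_N)\}]$, which is straightforward since $a$ grows logarithmically while $X_{\xi(\epsilon_N)}$ has Gaussian-type tails at scale $\sqrt{t_N}$. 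Beyond this, only routine bookkeeping of the Poisson counts and the passage from $\{\widetilde Y\equiv 0\ \text{on}\ A\}$-probabilities to arbitrary finite-dimensional marginals remain.
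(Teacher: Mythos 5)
Your proposal is correct and takes essentially the paper's route: the same reduction to Poisson void probabilities yields $-\log \mathbb{P}_{\epsilon_N,u_N}[\mathcal I\cap A=\emptyset\mid 0\notin\mathcal I]=u_N\big(\sum_{x\in A}e_{\epsilon_N,A}(x)-e_{\epsilon_N,\{0\}}(0)\big)$, and your sweeping-identity rewriting is algebraically the same as the paper's last-exit decomposition \eqref{E:10.43}--\eqref{E:10.44} (indeed it is exactly the massive capacity formula \eqref{E:10.30} of Remark~\ref{R:mass}), with the limit then identified via Proposition~\ref{P:10.4} and Lemma~\ref{L:10.20} as in \eqref{E:10.31}. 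The only point where you diverge is the on-diagonal asymptotics $g_{\epsilon_N}(0,0)\sim\frac2\pi\log N$, which you extract from the local CLT constant (or from optional stopping, where note that $a(X_{t\wedge H_0})$ is unbounded, so a mild integrability justification is needed), whereas the paper gets it for free from $g_{\epsilon_N}(0,0)=e_{\epsilon_N,\{0\}}(0)^{-1}$ together with \eqref{E:10.5} applied to $A=\{0\}$.
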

\begin{proof}
For $A\subset \subset \mathbb{Z}^2$ containing the origin, let
\begin{equation}
\label{E:10.41}
\Xi_N (A) = \mathbb{P}_{\epsilon_N,u_N} [\mathcal{I} \cap A =\emptyset \, |\, 0\notin \mathcal{I}].
\end{equation}
We have
\begin{equation}
\label{E:10.42}
\begin{split}
&\Xi_N (A) = \exp\big[ - u_N (\nu_{\epsilon_N}^* (W_A^*) - \nu_{\epsilon_N}^* (W_0^*) ) \big] \\
&\quad\stackrel{\eqref{E:10.2}}{=} \exp \bigg[ - u_N \Big( \sum_{x\in A} e_{\epsilon_N,A}(x) - e_{\epsilon_N,\{0\}}(0) \Big)\bigg].
\end{split}
\end{equation}
By last exit-decomposition for the killed walk $Y_{\cdot}$, we know that for all $z \in \mathbb{Z}^2$ and finite $K \subset \mathbb{Z}^2$,
\begin{equation}
\label{E:10.43}
P_{\epsilon_N,z} [H_K < \infty]= P_{z} [H_K <  \xi(\epsilon_N)] = \sum _{y\in K} g_{\epsilon_N} (z,y) e_{\epsilon_N,K}(y).
\end{equation}
Applying \eqref{E:10.43} with $K=A$ and $\{ 0\}$ in \eqref{E:10.42}, we readily obtain for all $z$,
\begin{equation}
\label{E:10.44}
\begin{split}
-\log \Xi_N (A) &= \frac{u_N}{g_{\epsilon_N}(z,0)} \big( P_{z} [H_A <  \xi(\epsilon_N)] - P_{z} [H_0 <  \xi(\epsilon_N)] \big)\\
&\quad + \frac{u_N}{g_{\epsilon_N}(z,0)} \sum_{y\in A} \Big(g_{\epsilon_N}(z,0) - g_{\epsilon_N}(z,y) \Big)  e_{\epsilon_N,A}(y).
\end{split}
\end{equation}
Choosing $z=0$, the first term on the right-hand side of \eqref{E:10.44} vanishes. Moreover, as $N \to \infty$,
\begin{equation}
\label{E:10.45}
g_{\epsilon_N}(0,0) \stackrel{\eqref{E:10.43}}{=}e_{\epsilon_N,\{ 0 \}}(0)^{-1} \stackrel{\eqref{E:10.5}}{=} \frac{2}{\pi} \frac{\log N}{1+o(1)},  
\end{equation}
hence $u_N/ g_{\epsilon_N}(0,0) \sim \alpha \log N$, on account of \eqref{E:6.2}. Finally, recalling \eqref{E:2.12} (and choosing $y=0$), approximating $\text{hm}_A$ by $e_{\epsilon_N, A}$ as in \eqref{E:10.5} and using \eqref{E:10.20}, we can write 
\begin{equation}
\label{E:10.31}
\textnormal{cap}(A)=  \frac2\pi \lim_{N \to \infty} \log N \sum_{x\in A} \big(g_{\epsilon_N}(0,0) - g_{\epsilon_N}(0,x) \big)\, e_{\epsilon_N, A}(x).
\end{equation}
Thus, returning to \eqref{E:10.44}, we obtain that
\begin{equation}
\label{E:10.45}
-\log \Xi_N (A)  \stackrel{N \to \infty}{\longrightarrow} \frac{\pi}{2} \,\alpha\, \text{cap}(A).
\end{equation}
In view of \eqref{E:10.41} and \eqref{E:1.5}, this concludes the proof.
\end{proof}

\begin{rmk}\label{R:mass}
1) Behind the proof of Theorem \ref{T:10.40} lurks a formula much in the spirit of \eqref{E:4.1}. Namely, for all $ A \subset\subset \mathbb{Z}^2$, $y \in A$,  and $( \epsilon_N )_N$ as in \eqref{E:10.4}, 
\begin{equation}
\label{E:10.30}
\textnormal{cap}(A)=  \lim_{N \to \infty} \bigg( \frac{2}{\pi} \log N \bigg)^2 \sum_{x\in A} e_{\epsilon_N, A}(x) \,  P_{\epsilon_N,x}[H_y =\infty].
\end{equation}
Indeed, following the steps of the proof of Lemma \ref{L:4.cap}, starting with \eqref{E:10.31} (with $y \, (\in A)$ instead of $0$) in place of \eqref{E:4.3}, and noting that $e_{\epsilon, A}$ satisfies a sweeping identity as \eqref{E:2.15}, \eqref{E:10.30} readily follows.\\
2) Theorem \ref{T:10.40} can be strengthened, essentially by suitably adapting the proofs of Theorem \ref{T:LIMIT} and Corollary \ref{C:LIMIT}, to yield the convergence as $N \to \infty$ of the pinned process $\mathbb{P}_{\epsilon_N, u_N}[\, \cdot \, | 0\notin \mathcal{I}]$ of massive interlacements towards $\mathbb{P}^{0,\mathbb{Z}^2}$, cf. above \eqref{C:6.17} (with $\epsilon_N$, $u_N$ as above \eqref{E:10.40}). We will not discuss this in more detail here.\\
3) One can also use the measures $\mathbb{P}_{\epsilon_N, u_N}$ to recover Theorem~\ref{T:7.12}. We sketch how this can be done. One first introduces the field of local times $(L_{x}^{\epsilon,u})_{x\in \mathbb{Z}^2}$ attached to $\mathbb{P}_{\epsilon, u}$, similarly to \eqref{E:5.12}. Due to \cite{Sz12c}, Theorem 0.1, and the discussion following \eqref{eq:GF}, one knows that
\begin{equation}
\label{E:10.46}
L_{\cdot}^{\epsilon,u} + \frac12 (\varphi^{\epsilon}_{\cdot})^2 \stackrel{\text{law}}{=} \frac12 ( \varphi^{\epsilon}_{\cdot} +\sqrt{2u})^2, \quad \text{for all }\epsilon,u > 0, 
\end{equation}
where $\varphi^{\epsilon}_{\cdot}$ is the centered Gaussian field with covariance $g_{\epsilon}(\cdot,\cdot)$, cf. \eqref{E:massGFF}, under $\mathbf\mathbf{P}^G_{\epsilon}$, independent of $\mathbb{P}_{\epsilon, u}$ on the left-hand side. One then ``pins down''  \eqref{E:10.46} much in the same way as in the proof of Theorem \ref{T:7.12}, to obtain that
\begin{equation}
\label{E:10.47}
L_{\cdot}^{\epsilon,u} + \frac12 (\widetilde{\varphi}^{\epsilon}_{\cdot})^2 \stackrel{\text{law}}{=} \frac12 ( \widetilde{\varphi}^{\epsilon}_{\cdot} +h_{\cdot}^{\epsilon}(u))^2, \quad \text{for all }\epsilon,u > 0, 
\end{equation}
where $L_{\cdot}^{\epsilon,u}$ is now sampled according to the pinned measure $\mathbb{P}_{\epsilon, u}[\, \cdot \, | 0\notin \mathcal{I}]$, $h_{\cdot}^{\epsilon}(u)=P_{\cdot}[H_0 < \xi(\epsilon)] \sqrt{2u}$ and $\widetilde{\varphi}^{\epsilon}_{\cdot}$ is distributed according to $\mathbf\mathbf{P}^G_{\epsilon}[\, \cdot \, | \, \varphi^{\epsilon}_0 =0]$ (to make the latter precise, one proceeds as in \eqref{E:7.10}, \eqref{E:7.10.0}). Now, one follows the proof of Theorem \ref{T:7.29}, with \eqref{E:10.47} in place of \eqref{E:7.12}, and shows that 
\begin{equation*}
\begin{array}{rcl}
L_{\cdot}^{\epsilon_N,u_N} \text{ (under $\mathbb{P}_{\epsilon_N, u_N}[\, \cdot \, | 0\notin \mathcal{I}]$) } &\stackrel{d}{\longrightarrow} & L_{\cdot,\alpha}  \text{ (see \eqref{E:7.30})}, \\
\widetilde{\varphi}^{\epsilon_N}_{\cdot} &\stackrel{d}{\longrightarrow}& \varphi_{\cdot}^{\, p}  \text{ (see \eqref{E:2.16})}
\end{array}
\end{equation*}
(all convergences are meant in the sense of finite-dimensional marginals), and for the second line, which holds in fact true for any sequence $\epsilon_N$ going to $0$, one uses \eqref{E:12.1}, along with a calculation similar to \eqref{E:7.31}, and Lemma \ref{L:10.20}. \hfill $\square$
\end{rmk}

Our results invite a few concluding comments.

\begin{rmk}
\label{R:final}
The above Poissonian description (which at this point is a representation \textit{in law}) naturally seems to arise as a print of the actual random walk, and one can try to make this precise by coupling the two objects. One is also left to wonder what happens if the order of magnitude of the relevant time scale $t_N$ in \eqref{E:1.4}, \eqref{E:1.4.0} is suitably altered. We hope to return to these questions in future work.\hfill $\square$

\end{rmk}

\bibliography{rodriguez}

\bibliographystyle{plain}



\end{document}